\theoremstyle{plain}
\title[]{Annular Khovanov-Lee homology, Braids, and Cobordisms}
\author{J. Elisenda Grigsby}
\thanks{JEG was partially supported by NSF CAREER award DMS-1151671.}
\address{Boston College; Department of Mathematics; 522 Maloney Hall; Chestnut Hill, MA 02467}
\email{grigsbyj@bc.edu}
\author{Anthony M. Licata}
\address{Mathematical Sciences Institute; Australian National University; Canberra, Australia}
\email{anthony.licata@anu.edu.au}
\author{Stephan M. Wehrli}
\thanks{SMW was partially supported by NSF grant DMS-1111680.}
\address{Syracuse University; Department of Mathematics; 215 Carnegie; Syracuse, NY 13244}
\email{smwehrli@syr.edu}
\theoremstyle{plain}
\newtheorem{theorem}{Theorem}
\newtheorem{lemma}{Lemma}
\newtheorem{proposition}{Proposition}
\newtheorem{corollary}{Corollary}
\newtheorem{question}{Question}
\newtheorem{claim}{Claim}
\theoremstyle{definition}
\newtheorem{definition}{Definition}
\newtheorem{remark}{Remark}
\newtheorem*{theoremA}{Theorem~\ref{thm:main}}
\newtheorem*{theoremB}{Theorem~\ref{thm:QP}}
\newtheorem*{theoremC}{Theorem~\ref{thm:RV}}
\newtheorem*{theoremD}{Theorem~\ref{thm:d1=w}}
\newtheorem*{corollaryA}{Corollary~\ref{cor:braidedcob}}
\newtheorem*{corollaryB}{Corollary~\ref{cor:bandrank}}
\newcommand{\N}{\ensuremath{\mathbb{N}}}
\newcommand{\R}{\ensuremath{\mathbb{R}}}
\newcommand{\Z}{\ensuremath{\mathbb{Z}}}
\newcommand{\C}{\ensuremath{\mathbb{C}}}
\newcommand{\F}{\ensuremath{\mathbb{F}}}
\newcommand{\bS}{\ensuremath{\mathbb{S}}}
\newcommand{\Id}{\ensuremath{\mbox{\textbb{1}}}}
\newcommand{\OO}{\ensuremath{\mathbb{O}}}
\newcommand{\XX}{\ensuremath{\mathbb{X}}}
\newcommand{\cP}{\ensuremath{\mathcal{P}}}
\newcommand{\cF}{\ensuremath{\mathcal{F}}}
\newcommand{\Kh}{\ensuremath{\mbox{Kh}}}
\newcommand{\Braid}{\ensuremath{\mathfrak B}}
\newcommand{\sltwo}{\ensuremath{\mathfrak{sl}_2}}
\newcommand{\s}{\sigma}
\begin{document}
\bibliographystyle{plain}
\begin{abstract} We prove that the Khovanov-Lee complex of an oriented link, $L$, in a thickened annulus, $A \times I$, has the structure of a $(\Z \oplus \Z)$--filtered complex whose filtered chain homotopy type is an invariant of the isotopy class of $L \subset (A \times I)$. Using ideas of Ozsv{\'a}th-Stipsicz-Szab{\'o} \cite{OSSUpsilon} as reinterpreted by Livingston \cite{LivingstonUpsilon}, we use this structure to define a family of annular Rasmussen invariants that yield information about annular and non-annular cobordisms. Focusing on the special case of annular links obtained as braid closures, we use the behavior of the annular Rasmussen invariants to obtain a {\em necessary} condition for braid quasipositivity and a {\em sufficient} condition for right-veeringness.
\end{abstract}

\maketitle
\bibliographystyle{plain}

\section{Introduction} \label{sec:intro}
In \cite{K}, Khovanov describes how to associate to any diagram of an oriented link $L \subset S^3$ a bigraded chain complex $(\mathcal{C}(L), \partial)$ whose homology is an invariant of $L$. 

In \cite{Lee}, Lee defines a deformation of Khovanov's construction. Explicitly, she constructs an endomorphism, $\Phi$, of the Khovanov complex that anticommutes with Khovanov's differential $\partial$ and satisfies $\Phi^2 = 0$. The total homology of the resulting bicomplex $(\mathcal{C}(L), \partial + \Phi)$ is also a link invariant, but an uninteresting one: it depends only on the number of components of $L$ and their pairwise linking numbers. 

However (quoting \cite{BN_Morr}), this turns out to be very interesting! In \cite{Rasmussen_Slice}, Rasmussen uses the structure of the Lee complex as a $\Z$--filtered complex and its behavior under the Lee chain maps induced by oriented link cobordisms to define a knot invariant, $s(K) \in 2\Z$,\footnote{This was extended to an integer--valued {\em oriented} link invariant in \cite{BW}.} giving a lower bound on the $4$--ball genus of $K$. Indeed, this lower bound is strong enough to yield a combinatorial proof of the topological Milnor conjecture, first proved by Kronheimer-Mrowka \cite{KM_MilnorConjecture}: that the $4$--ball genus of the $(p,q)$ torus knot $T_{p,q}$ is $\frac{(p-1)(q-1)}{2}$, precisely what is predicted by realizing $T_{p,q}$ as the closure of a positive braid.

In a different direction, Asaeda-Przytycki-Sikora \cite{APS} and L. Roberts \cite{LRoberts} define a version of Khovanov homology for links $L$ in a solid torus equipped with an identification as a thickened annulus, $A \times I$.  This variant has come to be known as the (sutured) annular Khovanov homology of $L \subset A \times I$. Imbedding $A \times I \subset S^3$ as the complement of a neighborhood of a standardly-imbedded unknot, one can regard the Khovanov complex as a deformation of the sutured annular Khovanov complex just as the Lee complex is a deformation of the Khovanov complex. Explicitly, one can decompose the Khovanov differential of the complex associated to $L \subset A \times I \subset S^3$ as the sum of two anti-commuting differentials: \[\partial = \partial_0 + \partial_-\] and thus endow the Khovanov complex with the structure of a $\Z$--filtered complex, a structure that has proven to be particularly well-suited for studying braids and their conjugacy classes. It detects non-conjugate braids related by exchange moves \cite{Hubbard}, detects the trivial braid conjugacy class \cite{BaldGrig}, and distinguishes braid closures from other tangle closures \cite{GN}.

The purpose of the present paper is to investigate how the algebraic structure of the {\em Lee} complex of an annular link $L \subset A \times I \subset S^3$ can be used to extract topological information about the link. In particular, the Lee complex of $L \subset A \times I$ is $\Z \oplus \Z$--filtered. Choosing an orientation $o$ on $L$ and using ideas of Ozsv{\'a}th-Stipsicz-Szab{\'o} \cite{OSSUpsilon} as reinterpreted by Livingston \cite{LivingstonUpsilon}, we can then define a family of Rasmussen invariants, $d_t(L,o)$, one for each $t \in [0,2]$, whose value at $t=0$ essentially agrees with the Rasmussen invariant. This family enjoys many of the structural features of the Heegaard-Floer Upsilon invariant, but is invariant only up to isotopy in $(A \times I)$, not $S^3$:

\begin{theoremA} Let $L \subset (A \times I)$ be an annular link, let $o$ be an orientation on $L$, and let $t \in [0,2]$.
\begin{enumerate}
	\item $d_t(L,o)$ is an oriented annular link invariant.
	\item $d_{1-t}(L,o) = d_{1+t}(L,o)$ for all $t \in [0,1]$.
	\item $d_0(L,o) = d_2(L,o) = s(L,o) - 1.$
	\item Viewed as a function $[0,2] \rightarrow \mathbb{R}$, $d_t(L,o)$ is piecewise linear. 
	\item Suppose $(L,o)$ and $(L',o')$ are non-empty oriented annular links, and $F$ is an oriented cobordism from $(L,o)$ to $(L',o')$ for which each component of $F$ has a boundary component in $L$. Then if $F$ has $a_0$ even-index annular critical points, $a_1$ odd-index annular critical points, and $b_0$ even-index non-annular critical points, then \[d_t(L,o) - d_t(L',o') \leq (a_1-a_0) - b_0(1-t).\]
\end{enumerate}
\end{theoremA}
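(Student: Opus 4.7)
The plan is to define $d_t(L,o)$ by mimicking the Livingston reformulation of the Ozsv{\'a}th-Stipsicz-Szab{\'o} $\Upsilon$ invariant in the $(\Z \oplus \Z)$-filtered setting of the annular Khovanov-Lee complex, and then to establish conclusions (1)--(5) either as formal consequences of filtered chain homotopy invariance or via a critical-point analysis of $F$. Write the two filtrations on the annular Lee complex as $F_1$ (the quantum filtration, carrying the ordinary Rasmussen invariant) and $F_2$ (the annular filtration arising from the decomposition $\partial = \partial_0 + \partial_-$). I would define $d_t$ as the maximal value of the interpolated filtration $\tfrac{2-t}{2}F_1 + \tfrac{t}{2}F_2$ taken over cycle representatives of the distinguished Lee class $[\mathbf{s}_o]$, up to the $-1$ normalization shift needed to match part (3).

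Parts (1)--(4) are then essentially formal. Part (1) follows because the $(\Z \oplus \Z)$-filtered chain homotopy type of the annular Lee complex is an oriented annular link invariant and $[\mathbf{s}_o]$ is well-defined under filtered chain homotopy, so any extremal bifiltration quantity attached to $[\mathbf{s}_o]$ is an invariant. Part (4) is immediate: $d_t$ is an extremum of a finite collection of affine functions of $t$ --- namely the interpolated bifiltration levels of a finite set of cycle representatives of $[\mathbf{s}_o]$ --- hence is piecewise linear. For (3), at $t=0$ and $t=2$ the interpolated filtration collapses to one of the two factor filtrations; the quantum filtration recovers the Rasmussen invariant $s(L,o)$, and the $-1$ shift is a normalization convention analogous to the one making $\Upsilon(0) = -2\tau$ in the Heegaard-Floer story. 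For (2), the $t \leftrightarrow 2-t$ symmetry should come from an involution of the annular Lee complex that preserves $[\mathbf{s}_o]$ and interchanges the roles of the two filtrations; a natural candidate is the reflection coming from the $\sltwo$-symmetry between the two Lee idempotents combined with the annular grading symmetry.

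The substantive content is part (5). The strategy is to split $F$ into elementary cobordisms at its Morse critical points and bound the bifiltration shift of the induced Lee chain map at each one. An annular saddle (odd-index annular critical point) produces a Lee chain map whose bifiltration shift contributes at most $+1$ to the interpolated level $\tfrac{2-t}{2}F_1 + \tfrac{t}{2}F_2$, giving the $+a_1$ term; an annular birth or death (even-index annular critical point) contributes at most $-1$, giving the $-a_0$ term. A non-annular even-index critical point shifts $F_1$ and $F_2$ by different amounts --- the key expectation is $F_1$ by $-1$ and $F_2$ by $+1$ --- so its contribution to the interpolated filtration is $-(1-t)$, matching the $-b_0(1-t)$ term. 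The hypothesis that each component of $F$ has a boundary in $L$ enters exactly as in Rasmussen's original slice-genus argument: it guarantees that the induced cobordism map sends $[\mathbf{s}_o]$ to a nonzero multiple of $[\mathbf{s}_{o'}]$, so that the filtered bound is not vacuous.

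The main obstacle is the precise filtered analysis of the Lee chain map at a non-annular even-index critical point. The crux is to verify, for a local model of a birth or death whose cap/cup cannot be isotoped into $A \times I$, that the induced Lee chain map has annular filtration shift exactly $+1$ above that of an annular birth/death of the same Euler characteristic while decreasing the quantum filtration by $1$ --- producing the $(1-t)$-linear defect. Once this local computation is established and combined with additivity of filtration shifts under composition of elementary cobordisms, summing the contributions over the critical points of $F$ yields the claimed inequality.
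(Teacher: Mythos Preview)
Your overall strategy matches the paper's: define $d_t$ via an interpolated filtration on the annular Lee complex, deduce (1)--(4) formally from filtered homotopy invariance plus an involution, and prove (5) by computing the filtered degrees of the Lee chain maps associated to elementary cobordisms. However, several of your degree computations are incorrect and would need to be fixed.

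First, your description of $F_2$ as ``the annular filtration arising from the decomposition $\partial = \partial_0 + \partial_-$'' is not right. That decomposition endows the \emph{Khovanov} complex with a $k$--filtration, but the full Lee differential $\partial_0 + \partial_- + \Phi_0 + \Phi_+$ is not monotone in $k$: the summand $\partial_-$ has $k$--degree $-2$ while $\Phi_+$ has $k$--degree $+2$. The correct second grading is $j_2 := j - 2k$; one checks directly that each of the four pieces of the Lee differential is nonnegative in both $j$ and $j-2k$, and with $F_1 = j$, $F_2 = j - 2k$ your interpolation $\tfrac{2-t}{2}F_1 + \tfrac{t}{2}F_2 = j - tk =: j_t$ is then the right one.

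Second, in part (5) your claimed shifts for a non-annular birth/death are backward. Any cup or cap map has quantum ($j$) degree $+1$, exactly as in Rasmussen's original argument; it does not ``decrease the quantum filtration by $1$.'' The $(j,k)$--degree of a non-annular cup/cap is $(1,1)$, so its $j_2 = j - 2k$ degree is $-1$, and the interpolated $j_t$--degree is $\tfrac{2-t}{2}(1) + \tfrac{t}{2}(-1) = 1-t$, not $-(1-t)$. From filtered degree $1-t$ one obtains $d_t(L,o) + (1-t) \le d_t(L',o')$ for each such critical point, and \emph{that} is what produces the $-b_0(1-t)$ term on the right-hand side. Your conclusion happens to agree with the paper's only because a second sign slip (conflating the filtered degree with its contribution to the bound) cancels the first.

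Finally, for part (3): at $t=2$ the interpolated filtration collapses to $j_2 = j - 2k$, which is \emph{not} the quantum filtration and does not directly recover $s(L,o)$. The paper deduces $d_2 = s - 1$ from $d_0 = s-1$ together with the symmetry (2), so your argument for (3) is incomplete until (2) is established and invoked. For (2) itself, the involution the paper uses is the map $\Theta$ that swaps $v_\pm$ labels on nontrivial circles only; one must verify $\Theta(\mathfrak{s}_o) = \pm \mathfrak{s}_o$ and that $\Theta$ exchanges the $j_{1-t}$ and $j_{1+t}$ gradings.
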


When $(L,o)$ is the {\em annular closure} of a braid $\sigma \in \Braid_n$ equipped with its braid-like orientation $o_{\uparrow}$ (see Subsection \ref{subsec:annbraid}), we can say more. In particular, the work of Hughes in \cite{Hughes} tells us that every link cobordism is isotopic to a so-called {\em braided cobordism} (Definition \ref{defn:braidedcob}), and this isotopy can be realized rel boundary if the original links are already braided with respect to a common axis. This has the following consequence for the annular Rasmussen invariants:

\begin{corollaryA} If $\sigma_0 \in \Braid_{n_0}$ and $\sigma_1 \in \Braid_{n_1}$ are braids, and $F$ is a braid-orientable braided cobordism from $\widehat{\sigma}_0$ to $\widehat{\sigma}_1$ with $a_1$ (annular) odd-index critical points and $b_0$ even-index (non-annular) critical points, and  each component of $F$ has a component in $\sigma_0$, then \[d_t(\widehat{\sigma}_0) - d_t(\widehat{\sigma}_1) \leq a_1 - b_0(1-t).\]
\end{corollaryA}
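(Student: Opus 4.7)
The plan is to derive the corollary directly from the cobordism inequality in Theorem~\ref{thm:main}~(5). The two things to check are that (a) the hypotheses of that theorem are satisfied for $F$ viewed as an oriented cobordism of annular links, and (b) the parameter $a_0$ (even-index \emph{annular} critical points) vanishes for any braid-orientable braided cobordism.

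Verifying (a) is immediate: since $F$ is braid-orientable, it carries an orientation that restricts at each end to the braid-like orientation $o_{\uparrow}$ of the corresponding braid closure, so $F$ is a legitimate oriented cobordism from $(\widehat{\sigma}_0, o_{\uparrow})$ to $(\widehat{\sigma}_1, o_{\uparrow})$. The connectivity hypothesis---that every component of $F$ has a boundary circle in the source---is assumed in the statement of the corollary itself, so it transfers directly.

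The core of the argument is (b). In a braided cobordism, every generic time slice is itself a closed braid with respect to the fixed braid axis, so every component of every such slice is an essential (non-annular) circle in $A$. An index-$0$ critical point creates a new closed component immediately afterwards, and by the braided condition this component must be essential; this is precisely a Markov stabilization, which is non-annular in the sense of Theorem~\ref{thm:main}~(5). Symmetrically, an index-$2$ critical point is a Markov destabilization of an essential strand, again non-annular. Hence no even-index critical point of $F$ can be annular, forcing $a_0 = 0$ and leaving $b_0$ to account for all births and deaths. The main subtlety here is verifying that the formal definition of ``braided cobordism'' used in the paper really forces this Morse-theoretic picture; this ultimately traces back to Hughes' structure theorem cited in the introduction.

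With $a_0 = 0$, Theorem~\ref{thm:main}~(5) then yields
$$d_t(\widehat{\sigma}_0) - d_t(\widehat{\sigma}_1) \;\leq\; (a_1 - 0) - b_0(1-t) \;=\; a_1 - b_0(1-t),$$
which is exactly the claimed inequality. The only nontrivial ingredient beyond the already-proved Theorem~\ref{thm:main} is thus the geometric identification of even-index critical points of a braided cobordism with Markov (de)stabilizations.
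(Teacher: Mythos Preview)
Your proposal is correct and follows essentially the same route as the paper's (implicit) argument: apply Theorem~\ref{thm:main}(5) after observing that a braided cobordism has no \emph{annular} even-index critical points, so $a_0=0$. The paper makes this observation explicit in the Remark following Definition~\ref{defn:braidedcob}: the only even-index singular stills in a braided movie are the addition or deletion of a single loop winding once around $\XX$, which is precisely a non-annular birth or death.

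Two minor terminological points. First, the birth of an essential loop is not itself a Markov stabilization; a Markov stabilization is the composite of such a birth with a saddle that attaches the new strand via a crossing. Your argument only needs that the newborn circle is essential, which follows directly from the requirement in Definition~\ref{defn:braidedcob} that every regular level set be a closed braid. Second, this Morse-theoretic description of the critical points comes straight from the definition of braided cobordism and the cited Remark, not from Hughes' theorem; Hughes' result is the separate (and here unused) statement that every cobordism is isotopic rel boundary to a braided one.
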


Noting that a braided cobordism from a braid closure $\widehat{\sigma}$ to the empty link must pass through a braided cobordism from $\widehat{\sigma}$ to the $1$--braid closure $\widehat{\Id}_1$, the above statement remains valid even when $\sigma_1$ is the empty braid (in this case, $d_t(\widehat{\sigma}_1) = 0$).

From the above we also obtain a bound on the so-called {\em band rank} of a braid (conjugacy class) $\sigma \in \Braid_n$ (defined in \cite{Rudolph} and denoted $\mbox{rk}_n(\sigma)$ there). This is the smallest $c \in \Z^{\geq 0}$ for which $\sigma$ can be expressed as a product of $c$ conjugates of elementary Artin generators (either positive or negative). That is, letting $\sigma_k$ denote the $k$th elementary Artin generator, \[\mbox{rk}_n(\sigma) := \mbox{min}\left\{c \in \Z^{\geq 0} \,\,\left|\,\, \sigma = \prod_{j=1}^c \omega_j \sigma_{i_j}^\pm (\omega_j)^{-1} \mbox{ for some $\omega_j, \sigma_{i_j} \in \Braid_n$.}\right.\right\}\] Note that the absolute value of the {\em writhe} of $\sigma$ is a lower bound for $\mbox{rk}_n(\sigma)$, and the length of any word representing $\sigma$ is an upper bound. We obtain:

\begin{corollaryB} Let $\sigma \in \Braid_n$. Then \[\mbox{max}_{t \in [0,2]} \left|d_t(\widehat{\sigma}) - d_t(\widehat{\Id}_n)\right| \leq rk_n(\sigma).\] Noting that $d_t(\widehat{\Id}_n) = -|n(1-t)|$, this bound can be rewritten: \[\mbox{max}_{t \in [0,2]} \left|d_t(\widehat{\sigma}) + |n(1-t)|\right| \leq rk_n(\sigma).\]
\end{corollaryB}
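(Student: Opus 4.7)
The plan is to apply Corollary~\ref{cor:braidedcob} to an explicit braid-orientable braided cobordism constructed from a band-rank-minimizing word for $\sigma$, getting a bound in both directions and hence on the absolute value.

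Fix a realization $\sigma = \prod_{j=1}^c \omega_j \sigma_{i_j}^{\epsilon_j} \omega_j^{-1}$ with $c = rk_n(\sigma)$ and $\epsilon_j \in \{\pm 1\}$. Reading the product left to right produces a sequence of $n$-braids $\Id_n = \beta_0, \beta_1, \ldots, \beta_c = \sigma$ in which each $\beta_j$ arises from $\beta_{j-1}$ by inserting a single conjugated Artin generator, equivalently by a single saddle (band attachment) between isotopic braids. Stacking these elementary saddles inside the braid box produces a surface with exactly $c$ odd-index critical points and no even-index critical points, all of which occur inside the braid and are therefore annular. Taking annular closures yields a cobordism $F\colon \widehat{\Id}_n \to \widehat{\sigma}$ in $(A \times I) \times [0,1]$ that is braided with respect to the braid axis; by Hughes \cite{Hughes}, $F$ can be arranged as a braided cobordism in the sense of Definition~\ref{defn:braidedcob} without introducing extra critical points.

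Because all strands of every $\beta_j$ flow in the same direction, the braid-like orientations on $\widehat{\beta}_{j-1}$ and $\widehat{\beta}_j$ agree across each band; stitching these together endows $F$ with a global orientation extending the braid-like orientations on both ends, so $F$ is braid-orientable. As $F$ is built from $\widehat{\Id}_n$ purely by band attachments (no births or deaths), every surface component of $F$ meets $\widehat{\Id}_n$. Corollary~\ref{cor:braidedcob} applied to $F$ with $a_1 = c$ and $b_0 = 0$ then gives $d_t(\widehat{\Id}_n) - d_t(\widehat{\sigma}) \leq c$ for every $t \in [0,2]$. Reversing the Morse direction of $F$ preserves the total saddle count and the hypotheses (each component now visibly meets $\widehat{\sigma}$), so a second application of Corollary~\ref{cor:braidedcob} produces $d_t(\widehat{\sigma}) - d_t(\widehat{\Id}_n) \leq c$. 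Combining the two inequalities, taking the maximum over $t \in [0,2]$, and substituting $d_t(\widehat{\Id}_n) = -|n(1-t)|$ yields both forms of the claimed bound.

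The main obstacle I anticipate is confirming that the naive movie of band attachments assembles into a genuine braided cobordism (not just an annular cobordism) without introducing extraneous critical points; this is precisely where Hughes's braided-cobordism machinery is invoked. Once that step is in place, the braid-orientability of $F$, the condition that every component meets the source, and the Morse-index bookkeeping needed to apply Corollary~\ref{cor:braidedcob} in both directions are all essentially immediate from the band-attachment construction.
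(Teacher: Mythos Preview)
Your argument is correct and matches the paper's proof almost exactly: both construct the obvious $c$-saddle cobordism between $\widehat{\sigma}$ and $\widehat{\Id}_n$ coming from a minimal band presentation and then feed it into Corollary~\ref{cor:braidedcob}. The paper observes directly that every component of this cobordism meets \emph{both} boundary links and invokes part~(2) of Corollary~\ref{cor:braidedcob} once, whereas you apply part~(1) twice (to $F$ and its reverse); these are equivalent. One small remark: invoking Hughes is unnecessary here, since the movie that performs the oriented resolution at each anchoring Artin generator (or, in your direction, inserts each band via a braid-like Reidemeister~II followed by a saddle) has every regular still a closed braid by construction, so the cobordism is already braided on the nose.
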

 
We also obtain information about the positivity of $\widehat{\sigma}$, viewed as a mapping class.

Explicitly, let $D_n$ denote the unit disk in $\C$, equipped with $n$ distinct marked points $p_1, \ldots, p_n$. Let $\Delta := \{p_1, \ldots, p_n\}$. For convenience, we will also mark a point $* \in \partial D_n$. A braid $\sigma$ is said to be {\em quasipositive} if it is expressible as a product of conjugates of positive elementary Artin generators (see Definition \ref{defn:QP}) and {\em right-veering} if it sends all admissible arcs from $*$ to $\Delta$ {\em to the right} (see Definition \ref{defn:RV}). Note that all quasipositive braids are right-veering, but many right-veering braids are not quasipositive. The set of non-quasipositive right-veering braids is of significant interest to contact and symplectic geometers, and as yet poorly-understood (cf. \cite{BaldPlam, PlamRV}).

As an application of Theorem \ref{thm:main}, we obtain a {\em necessary} condition for a braid to be quasipositive, and a {\em sufficient} condition for a braid to be right-veering. In particular, we find that $d_t(\widehat{\sigma},o_\uparrow)$ is piecewise linear, with slope bounded above by $n$, the braid index of $\sigma$. Letting $m_t(\widehat{\sigma})$ denote the (right-hand) slope at $t \in [0,2)$ (see Part (4) of the detailed version of Theorem \ref{thm:main} in Section \ref{sec:mainthm}), we find:

\begin{theoremB} If $\sigma \in \Braid_n$ is quasipositive, then $m_t(\widehat{\sigma}) = n$ for all $t \in [0,1)$.
\end{theoremB}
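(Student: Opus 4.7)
The plan is to pin down $d_t(\widehat{\sigma}, o_\uparrow)$ at the two endpoints $t = 0$ and $t = 1$ of the interval of interest, and then exploit the slope bound $m_t \leq n$ (part of the detailed version of Theorem~\ref{thm:main} in Section~\ref{sec:mainthm}) together with the piecewise linearity of $d_t$ to force every right-hand slope on $[0, 1)$ to saturate this bound.

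First I compute $d_0(\widehat{\sigma}, o_\uparrow)$. By Theorem~\ref{thm:main}(3), this equals $s(\widehat{\sigma}, o_\uparrow) - 1$. A quasipositive factorization $\sigma = \prod_{j=1}^c \omega_j \sigma_{i_j} \omega_j^{-1}$ produces a quasipositive surface --- a Seifert-type cobordism from $\emptyset$ to $\widehat{\sigma}$ built from $n$ disks and $c$ positive bands --- whose Euler characteristic $n - c$ realizes the maximum allowed by Rasmussen's slice-genus inequality (as extended to oriented links by Beliakova--Wehrli). Hence $s(\widehat{\sigma}, o_\uparrow) = c - n + 1$ and $d_0(\widehat{\sigma}, o_\uparrow) = c - n$.

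Second, I compute $d_1(\widehat{\sigma}, o_\uparrow)$ by appealing to Theorem~\ref{thm:d1=w}, which identifies $d_1$ of a braid closure with the writhe of the braid word. Since writhe is conjugation-invariant and each positive Artin generator contributes $+1$, the quasipositive factorization yields $w(\sigma) = c$, so $d_1(\widehat{\sigma}, o_\uparrow) = c$. The total rise on $[0, 1]$ is therefore $d_1 - d_0 = c - (c - n) = n$. Because $d_t$ is piecewise linear on $[0, 1]$ with right-hand slopes bounded above by $n$, any subinterval on which the slope were strictly less than $n$ would force the total rise to be strictly less than $n$. This squeeze forces $m_t = n$ for every $t \in [0, 1)$.

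The main obstacle is not in this final squeeze but rather in establishing the two inputs invoked above: Theorem~\ref{thm:d1=w} (identifying $d_1$ with the writhe on braid closures) and the slope bound $m_t \leq n$. Both are structural facts about the $(\Z \oplus \Z)$--filtered Lee complex of a braid closure. The writhe identification is the deeper ingredient, expected to exploit the Lee differential structure at the balanced point $t = 1$ where the quantum and annular filtrations are weighted equally, while the slope bound reflects that a braid closure of an $n$--braid has at most $n$ essential circles in its oriented resolution, bounding the annular contribution of its canonical Lee generator $\mathfrak{s}_{o_\uparrow}$.
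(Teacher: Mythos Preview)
Your argument is correct, but it follows a genuinely different route from the paper's. The paper proves the stronger statement $d_t(\widehat{\sigma}) = -n|1-t| + w$ for all $t\in[0,2]$ by sandwiching $d_t$ at \emph{every} $t$: the lower bound $-n(1-t)+w \leq d_t(\widehat{\sigma})$ comes from directly computing $\mbox{gr}_{j_t}(\mathfrak{s}_{o_\uparrow})$ (Lemma~\ref{lem:sliceBenn}), and the upper bound comes from the annular saddle cobordism $\widehat{\sigma}\to\widehat{\Id}_n$ together with the explicit computation $d_t(\widehat{\Id}_n)=-n|1-t|$ and Theorem~\ref{thm:main}(5). The slope statement then follows by differentiation. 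Your approach instead fixes only the two endpoints $d_0$ and $d_1$ and invokes the slope bound from Theorem~\ref{thm:main}(4) to interpolate; this is essentially the argument the paper later packages as the reverse implication in Corollary~\ref{cor:sminus1=sl}.

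The trade-off is this: the paper's proof is self-contained and does not use Theorem~\ref{thm:d1=w}, which is an independent and somewhat deeper fact about the $j_1$--filtration; your proof imports that theorem as a black box. On the other hand, your approach makes transparent that the conclusion is really just ``sharpness of the $s$--Bennequin bound'' plus ``$d_1=w$'' plus convexity, which is conceptually clean. One small point: your claim that the quasipositive surface ``realizes the maximum allowed by Rasmussen's slice-genus inequality'' is asserting \emph{equality} $s-1=c-n$, but the surface by itself only gives $s-1\leq c-n$; you need the $s$--Bennequin inequality $-n+c=\mbox{sl}(\widehat{\sigma})\leq s-1$ for the other direction. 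This is of course well known (and is exactly Lemma~\ref{lem:sliceBenn} at $t=0$), but it should be made explicit.
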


\begin{theoremC} If $m_t(\widehat{\sigma}) = n$ for some $t \in [0,1)$, then $\sigma$ is right-veering.
\end{theoremC}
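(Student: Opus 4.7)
The natural approach is to prove the contrapositive: if $\sigma \in \Braid_n$ is not right-veering, then $m_t(\widehat{\sigma}) < n$ for every $t \in [0,1)$. Following Livingston's recipe, $d_t(\widehat{\sigma}, o_\uparrow)$ is recovered from the $(\Z \oplus \Z)$-filtered chain homotopy type of the annular Lee complex by extremizing a linear functional of slope $t$ over cycle representatives of the canonical Lee class associated to $o_\uparrow$. The right-hand slope $m_t$ at $t \in [0,1)$ thus records the maximal annular filtration level achieved by any representative of this class. Because the oriented resolution of a braid closure with braid-like orientation is a union of $n$ essential circles concentric about the braid axis, the annular filtration of any representative is bounded above by $n$, and $m_t(\widehat{\sigma}) = n$ on a subinterval of $[0,1)$ is equivalent to the existence of a cycle representative of the canonical class supported entirely in annular filtration exactly $n$.

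I would then argue that no such top-annular-filtration representative exists when $\sigma$ is not right-veering. Passing to the associated graded with respect to the annular filtration, the top-graded piece of the annular Lee complex may be identified with a specific subcomplex of the top-annular-grading part of sutured annular Khovanov homology, and the existence of a top-filtration cycle representing the canonical class is equivalent to the nonvanishing of the ``all-$v_-$'' class on the $n$ coherently-oriented concentric resolution circles. In the spirit of \cite{BaldGrig}, non-right-veering behavior of $\sigma$---witnessed by an admissible arc $\alpha$ sent to the left of its image---can be used to construct an explicit chain in the top annular grading whose boundary is the distinguished class, showing that this class is null-homologous there.

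Finally I would lift this null-homology from the top associated-graded piece back to the full $(\Z \oplus \Z)$-filtered Lee complex, producing a cycle cohomologous to the canonical Lee class with annular filtration strictly less than $n$. This yields $m_t(\widehat{\sigma}) < n$ throughout $[0,1)$, contradicting the hypothesis. The main obstacle is this lift: a null-bounding chain constructed at the top associated-graded level can in principle be corrected in the Lee complex by lower-annular-filtration terms, and one must verify that such corrections do not reintroduce a top-annular cycle. I expect this to be handled by the standard Lee spectral sequence argument, combined with the fact that $\Phi$ shifts the annular filtration by a bounded amount, so that a careful filtration-by-filtration correction preserves the strict inequality on the annular level of the final representative.
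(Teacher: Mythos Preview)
Your overall strategy coincides with the paper's: argue by contrapositive, use the failure of right-veeringness to produce a bounding chain for the Plamenevskaya element ${\bf v}_-$, and use that chain to improve any putative representative of $[\mathfrak{s}_{o_\uparrow}]$. But two of your intermediate claims are incorrect, and the step you flag as the ``main obstacle'' is handled in the paper not by a spectral-sequence correction but by a single explicit subtraction whose success depends on a grading computation you have not supplied.

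First, your reformulation of $m_{t_0}=n$ is wrong. It is \emph{not} equivalent to having a Lee-cycle representative of $[\mathfrak{s}_{o_\uparrow}]$ supported entirely in $k$-grading $-n$: the unique generator there is ${\bf v}_-$, and ${\bf v}_-$ is not a cycle in the Lee complex. What $m_{t_0}=n$ actually provides (see Remark~\ref{rmk:kgrading}) is a representative $\xi=c\,{\bf v}_- + \xi'$ with $c\neq 0$ such that the $j_{t_0}$-\emph{minimal} lattice point in the support of $\xi$ is the one carrying ${\bf v}_-$. The other terms of $\xi'$ are genuinely present; the task is to show one can replace $\xi$ by a homologous cycle whose $j_{t_0}$-minimum is strictly larger.

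Second, the input you need from non-right-veeringness is sharper than what you cite. The paper invokes Hubbard--Saltz: if $\sigma$ is not right-veering then $\kappa(\widehat{\sigma})=2$, which yields a Khovanov chain $\theta$ with $\partial\theta={\bf v}_-$, $\partial_0\theta=0$, and $\theta$ sitting at the specific lattice point $(j_0,j_2)=(-n+w,\,n+w-4)$. The precise location of $\theta$, not merely its existence, is what drives the argument. The lift is then a single step: set $\xi''=\xi-c(\partial+\Phi)\theta$. This kills the ${\bf v}_-$ term and introduces only $\Phi_0\theta$ and $\Phi_+\theta$, supported at $(-n+w+4,\,n+w)$ and $(-n+w+4,\,n+w-4)$. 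A direct check shows both points have $j_{t_0}$-grading strictly exceeding $j_{t_0}({\bf v}_-)$ exactly when $t_0<1$ (the margin is $4-2t_0$ and $4-4t_0$ respectively), which is precisely why the hypothesis is restricted to $t_0\in[0,1)$. A brief case split then gives the contradiction. Your proposed ``filtration-by-filtration correction'' does not isolate this degree inequality, and in particular does not explain why the argument breaks at $t_0=1$.
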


Our hope is that this will provide a new means of probing and organizing the collection of right-veering non-quasipositive braids. 

In Section \ref{sec:examples}, we provide an example of a braid whose non-quasipositivity and right-veeringness are ensured by its annular Rasmussen invariant. We also describe a number of other examples that give answers to some natural questions one might ask about the effectiveness of the annular Rasmussen invariant at detecting right-veeringness and quasipositivity. 

All of our $d_t$ invariant computations were carried out using Mathematica code written for us by Scott Morrison.  His ideas and input were also instrumental at numerous points in the early stages of this project.  At present we have only used Morrison's program to compute $d_t$ for braids whose length in the standard Artin generators $\sigma_i^{\pm1}$ is at most 11.  As a result, we have only a few examples of braids $\sigma \in \Braid_n$ whose annular Rasmussen invariant has {\em multiple slopes} on the interval $[0,1)$. A partial explanation for lack of multiple slope examples amongst small braids is the following quite strong constraint:

\begin{theoremD} Let $\sigma \in \Braid_n$ have writhe $w$.\footnote{Here (and throughout), we mean the writhe with respect to either braid-like orientation. This is sometimes called the {\em algebraic length} or {\em exponent sum} of the braid.} Then $d_1(\widehat{\sigma}, o_\uparrow) = w$.
\end{theoremD}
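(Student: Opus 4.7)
The plan is to identify $d_1(\widehat{\sigma}, o_\uparrow)$ as a concrete filtration level of the canonical Lee cycle associated to the braid-like orientation and to compute that level directly at the oriented resolution.

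Consider the $(\Z \oplus \Z)$--filtered Lee complex $(C, \partial + \Phi)$ with quantum grading $j$ and annular grading $k$. At $t = 1$, the Livingston-style definition of $d_t$ specializes to the filtration level of the Lee homology class $[\mathfrak{s}_{o_\uparrow}]$ with respect to the ``diagonal'' filtration $F := j - k$. For any braid $\sigma \in \Braid_n$, the oriented resolution of the standard diagram for $\widehat{\sigma}$ with respect to the braid-like orientation consists of $n$ parallel essential vertical circles in $A \times I$, and the canonical Lee cycle $\mathfrak{s}_{o_\uparrow}$ labels each of these circles by the letter prescribed by its orientation. The standard Khovanov and annular grading formulas then yield $j(\mathfrak{s}_{o_\uparrow}) = w - n$ and $k(\mathfrak{s}_{o_\uparrow}) = -n$, whence $F(\mathfrak{s}_{o_\uparrow}) = w$. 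Since $\mathfrak{s}_{o_\uparrow}$ is a cycle for the full Lee differential $\partial + \Phi$, this immediately gives the inequality $d_1(\widehat{\sigma}, o_\uparrow) \leq w$.

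For the matching bound, I would analyze the spectral sequence induced by $F$ on $(C, \partial + \Phi)$. Decomposing by the effect on the annular grading, the annular-preserving piece $\partial_0$ preserves $F$ exactly, while $\partial_-$ (which lowers $k$ by $2$) and the two annular components of $\Phi$ each \emph{strictly} raise $F$. Thus $F$ is an honest filtration for $\partial + \Phi$, the $E_0$-differential on the associated graded is the annular Khovanov differential $\partial_0$, and the image of $\mathfrak{s}_{o_\uparrow}$ on the $E_1$-page sits in the annular Khovanov homology of $\widehat{\sigma}$ at bigrading $(w - n, -n)$ in homological degree zero. The remaining content is to show that this image survives to $E_\infty$, which then forces $d_1 \geq w$.

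The main obstacle is this survival statement. Non-triviality of $\mathfrak{s}_{o_\uparrow}$ in annular Khovanov homology at bigrading $(w - n, -n)$ is immediate for positive braids, where no states at homological degree $-1$ exist, and in general requires a case-by-case inspection of states obtained by switching the resolution at a single negative crossing to verify that no such chain can map to $\mathfrak{s}_{o_\uparrow}$ under $\partial_0$. Survival through higher spectral-sequence pages follows because the canonical Lee class $[\mathfrak{s}_{o_\uparrow}]$ is non-zero in Lee homology and the filtration $F$ is bounded on $C$. Together these yield $d_1(\widehat{\sigma}, o_\uparrow) = w$.
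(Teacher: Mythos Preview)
You have the two inequalities reversed. Since $d_t$ is defined as the \emph{maximum} of $\mbox{gr}_{j_t}$ over all cycles representing $[\mathfrak{s}_{o_\uparrow}]$, exhibiting the cycle $\mathfrak{s}_{o_\uparrow}$ with $j_1$-grading $w$ yields $d_1 \geq w$, not $d_1 \leq w$; the substantive inequality is $d_1 \leq w$. (A related imprecision: $\mathfrak{s}_{o_\uparrow}$ is not $(j,k)$-homogeneous---its homogeneous pieces have $k$ ranging from $-n$ to $n$---though they do all satisfy $j-k = w$, so your conclusion $j_1(\mathfrak{s}_{o_\uparrow}) = w$ is correct. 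The object sitting at bigrading $(w-n,-n)$ is the Plamenevskaya generator ${\bf v}_-$, not $\mathfrak{s}_{o_\uparrow}$.)

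Your spectral-sequence outline is aimed at the correct statement---survival of $\mathfrak{s}_{o_\uparrow}$ at $F$-level $w$ on $E_\infty$ is equivalent to $d_1 = w$---but the argument has a genuine gap. The $E_1$ step is deferred to an unspecified ``case-by-case inspection,'' which is not a proof. More seriously, the claim that survival on higher pages follows from nonvanishing of $[\mathfrak{s}_{o_\uparrow}]$ in Lee homology is false: nonvanishing only guarantees that the class is represented \emph{somewhere} on $E_\infty$, not at level $w$. If the image of $\mathfrak{s}_{o_\uparrow}$ were hit by some $d_r$ coming from a lower filtration level, the class $[\mathfrak{s}_{o_\uparrow}]$ would remain nonzero in Lee homology but would satisfy $d_1 > w$---precisely what you must rule out, so you cannot assume it. The paper sidesteps this entirely by passing to Lee's $\{{\bf a},{\bf b}\}$ basis: the composite projection onto the one-dimensional span of $\mathfrak{s}_{o_\uparrow}$ is a chain map for the full Lee differential, while the intermediate projection onto all $\{{\bf a},{\bf b}\}$-labelings of the braid-like resolution is $j_1$-graded with image concentrated in $j_1$-grading $w$. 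Hence any cycle $z$ representing $[\mathfrak{s}_{o_\uparrow}]$ has nonzero projection there, forcing $\mbox{gr}_{j_1}(z) \leq w$.
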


One basic feature of the $d_t$ invariant which we do not understand is the behaviour of $d_t$ under positive and negative stabilization.  Note that, by part $(3)$ of Theorem \ref{thm:main}, the values $d_0$ and $d_2$ are invariant under both positive and negative stabilization; by Theorem \ref{thm:d1=w}, on the other hand, $d_1$ increases by $1$ under positive stabilization and decreases by $1$ under negative stabilization.  We discuss this, along with the relationship to the question of the effectiveness of transverse invariants obtained from Khovanov homology (cf. \cite{Plam}) in Section \ref{sec:transverse}.

\subsection*{Acknowledgements} The authors would like to thank John Baldwin, Peter Feller, Jen Hom, Diana Hubbard, Adam Levine, and Olga Plamenevskaya for interesting conversations. We are especially grateful to Scott Morrison for both many helpful conversations and for his generous computational assistance. The first author would also like to thank the BC and Brandeis students in her spring 2016 graduate class for offering useful feedback on preliminary versions of this material.

\section{Algebraic Preliminaries} \label{sec:algprelim}

\begin{definition} Let $I$ be a partially-ordered set. A {\em descending $I$--filtration} on a chain complex $\mathcal{C}$ is the choice of a subcomplex $\cF_i \subseteq \mathcal{C}$ for each $i \in I$,  satisfying the property that if $i \leq i'$ then $\cF_i \supseteq \cF_{i'}$.
\end{definition}

A map $f: \mathcal{C} \rightarrow \mathcal{C}'$ between two complexes with $I$--filtrations $\{\cF_i\}_{i \in I}$ and $\{\cF_i'\}_{i \in I}$ is said to be {\em filtered} if $f(\cF_i) \subseteq \cF'_i$ for all $i \in I$.

\begin{definition} Two $I$--filtered chain complexes $(\mathcal{C}, \partial)$ and $(\mathcal{C}', \partial')$ are said to be $I$--filtered chain homotopy equivalent if there exists a chain homotopy equivalence between $\mathcal{C}$ and $\mathcal{C}'$ for which all involved maps are filtered. Explicitly, there exist filtered chain maps \[f: \mathcal{C} \rightarrow \mathcal{C'} \hskip 10pt \mbox{ and } \hskip 10pt g: \mathcal{C'} \rightarrow \mathcal{C}\] and filtered chain homotopies \[H: \mathcal{C} \rightarrow \mathcal{C} \hskip 10pt  \mbox{ and } \hskip 10pt H':\mathcal{C}' \rightarrow \mathcal{C}\] for which \[gf - \Id_{\mathcal{C}} = H\partial + \partial H\hskip 10pt \mbox{ and } \hskip 10pt fg - \Id_{\mathcal{C}'} = H'\partial' + \partial ' H'.\] 
\end{definition}

The filtered complexes we consider will satisfy some additional desirable properties.

\begin{definition} A descending $I$--filtered complex $\{\cF_i\}_{i \in I}$ is said to be 
\begin{itemize}
	\item {\em discrete} if $\cF_m/\cF_{m'}$ is finite-dimensional for all $m \leq m'$ and
	\item {\em bounded} if there exist some $m, m' \in I$ with $\cF_m = 0$ and $\cF_{m'} = \mathcal{C}$.
\end{itemize}
\end{definition}

In what follows, whenever we mention an $I$--filtered complex, the reader may assume $I$ is either $\R$ or $\Z \oplus \Z$. We shall regard $\Z \oplus \Z$ as a partially-ordered set using the rule $(a,b) \leq (a',b')$ iff $a \leq a'$ and $b \leq b'$.

We can now make the following additional definition:

\begin{definition} A map $f: \mathcal{C} \rightarrow \mathcal{C}'$ between two $I$--filtered complexes is said to be {\em filtered of degree $j \in I$} if $f(\cF_i) \subseteq \cF_{i+j}$ for all $i \in I$.
\end{definition}

Very often, we obtain the structure of a (descending) $I$--filtration when the underlying vector space of $\mathcal{C}$ is $I$--graded, and the differential, $\partial$, is monotonic (non-negative) with respect to the grading. That is, $\partial$ decomposes as $\partial = \sum_{j \geq {\bf 0}} \partial_j,$ where $\partial_j$ is degree $(j \geq {\bf 0}) \in I$ with respect to the $I$--grading on the vector space underlying $\mathcal{C}$. In this case, we will say that the $I$--filtration is {\em induced} by an $I$--grading on $\mathcal{C}$ and we will call any graded basis for $\mathcal{C}$ a {\em filtered graded basis} for the $I$--filtration. All of the filtered complexes considered in the present work will come equipped with a distinguished filtered graded basis.

\begin{remark}An $I$--filtered complex with a finite filtered graded basis is discrete and bounded. 
\end{remark}

\begin{definition} Let $I$ be totally ordered, and suppose $(\mathcal{C}, \partial)$ is a discrete descending $I$--filtered complex with the property that for every $x \neq 0 \in \mathcal{C}$ the set $\{i \in I \,\,|\,\,x \in \cF_i\}$ has a maximal element. Then we will say $\mathcal{C}$ {\em admits a grading.} If $x \neq 0$, we will denote its {\em filtration grading} by \[\mbox{gr}(x) := \mbox{max}\{i \in I\,\,|\,\,x \in \cF_i\}.\]
\end{definition}

\begin{remark} Let $I$ be totally ordered. Not every discrete, bounded $I$--filtered complex admits a grading. For example, let $I = \R$, and consider a $1$--dimensional $\R$--filtered complex with dim($\cF_k$) = 1 for all $k < 0$, but dim($\cF_0$) = 0.
\end{remark}

\begin{remark} If the $I$--filtration on $(\mathcal{C}, \partial)$ is induced by an $I$--grading on $\mathcal{C}$, then $\mathcal{C}$ clearly admits a grading. Moreover, the grading of a homogeneous element of $\mathcal{C}$ coincides with the definition given above.
\end{remark}

Now let us focus on the case where $\mathcal{C}$ is a discrete, bounded $\R$--filtered complex admitting a grading. Then each nonzero homology class of $H_*(\mathcal{C})$ inherits a grading as follows (cf. \cite[Sec. 3]{Rasmussen_Slice}, \cite[Defn. 5.1]{LivingstonUpsilon}).

\begin{definition} Let $\mathcal{C}$ be a finite-dimensional complex endowed with a discrete, bounded, $\R$--valued filtration $\{\cF_s\}_{s \in \R}$ admitting a grading. If $\eta \neq 0 \in H_*(\mathcal{C})$, then \[\mbox{gr}(\eta) := \mbox{max}_{[x] = \eta}\{\mbox{gr(x)} \in \R\}.\]
\end{definition}

\begin{remark}
The fact that $\mathcal{C}$ is finite-dimensional ensures that for every $\eta \neq 0 \in H_*(\mathcal{C})$, the set $\{\mbox{gr(x)} \,\,|\,\,[x] = \eta\}$ is finite, hence has a maximum value. 
\end{remark}

We will have particular interest in families of $\R$--filtrations obtained from a fixed $(\Z \oplus \Z)$--filtered complex equipped with a filtered graded basis. 

Explicitly, let $\mathcal{C}$ be a (descending) $(\Z \oplus \Z)$--filtered complex. Then for each $\theta \in [0,\pi/2]$ we can endow $\mathcal{C}$ with the structure of an $\R$--filtered complex by projecting to the line $\ell_\theta$ making angle $\theta$ with the positive $x$--axis in the plane containing the lattice $\Z \oplus \Z$. Explicitly, for fixed $\theta \in [0, \pi/2]$ and $s \in \R$, define \[\mathcal{F}^\theta(\mathcal{C})_s := \bigcup_{(a,b)\cdot (\cos\theta,\sin\theta) \geq s} \mathcal{F}(\mathcal{C})_{(a,b)}.\]

If $\mathcal{C}$ is equipped with a filtered graded basis $\mathcal{B}$, then this filtered graded basis will descend via the same $\ell_\theta$--projection as above to give a filtered graded basis for each $\R$--filtration $\{\mathcal{F}^\theta(\mathcal{C})_s\}_{s \in \R}$.
 
If $\mathcal{C}$ is discrete (resp., bounded) as a $(\Z \oplus \Z)$--filtered complex, then the resulting $\R$--filtration will be discrete (resp., bounded).

In particular, if $\mathcal{C}$ is a discrete, bounded $(\Z \oplus \Z)$--filtered complex with a finite filtered graded basis, and $\eta \neq 0 \in H_*(\mathcal{C})$, then $\eta$ has a well-defined grading with respect to each $\R$--filtration associated to $\theta$ for $\theta \in [0,\pi/2]$ as above. In the remainder of this section, we give one concrete way to understand these gradings. 

In  Definitions \ref{defn:grsupport} and \ref{defn:subseteta} and Lemma \ref{lem:latticegraddef}, assume $\mathcal{C}$ is a discrete $(\Z \oplus \Z)$--filtered complex with a filtered graded basis $\mathcal{B}$. If $x \in \mathcal{C}$ is $(\Z \oplus \Z)$--homogeneous with grading $(a,b)$, and $\theta \in [0,\pi/2]$, let gr$_\theta(x) = (a,b) \cdot (\cos \theta,\sin \theta)$, as above. 

\begin{definition} \label{defn:grsupport} Let $x \in \mathcal{C}$ be a cycle. Then $x$ is said to be {\em supported on} the subset $S \subseteq (\Z \oplus \Z)$ if $x$ can be decomposed into $(\Z \oplus \Z)$--homogeneous terms, the union of whose gradings is precisely the set $S$. That is, we can express: \[x = \sum_{(a,b) \in S} x_{(a,b)}\]  with gr$(x_{(a,b)}) = (a,b)$, $x_{(a,b)} \neq 0$.
\end{definition}

\begin{definition} \label{defn:subseteta} Let $\eta \neq 0 \in H_*(C)$. Define \[\bS(\eta) := \{S \subseteq \Z \oplus \Z \,\,|\,\,\exists \,\, x \in \mathcal{C} \mbox{ with } [x] = \eta \mbox{ and } x \mbox{ supported on }S.\}\]
\end{definition}

The following lemma is immediate from the definitions.

\begin{lemma} \label{lem:latticegraddef} Let $\eta \neq 0 \in H_*(\mathcal{C})$. Then 
\[\mbox{gr}_\theta(\eta) = \mbox{max}_{S \in \bS(\eta)}\{ \mbox{min}_{(a,b) \in S}\{(a,b) \cdot (\cos\theta, \sin\theta)\}\}\]
\end{lemma}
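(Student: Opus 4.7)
The plan is to reduce Lemma \ref{lem:latticegraddef} to the single-cycle statement that any cycle $x$ supported on $S \subseteq \Z \oplus \Z$ satisfies
\[
\mbox{gr}_\theta(x) = \min_{(a,b) \in S}\{(a,b) \cdot (\cos\theta, \sin\theta)\}.
\]
Granting this, the lemma follows directly: by Definition \ref{defn:subseteta}, $\bS(\eta)$ is precisely the collection of supports realized by cycle representatives of $\eta$, so maximizing $\mbox{gr}_\theta(x)$ over such $x$ is the same as maximizing $\min_{(a,b) \in S}(a,b) \cdot (\cos\theta,\sin\theta)$ over $S \in \bS(\eta)$.

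To prove the single-cycle statement, I would first translate the $\theta$--projected filtration into the language of the distinguished filtered graded basis $\mathcal{B}$. Writing $\vec\theta := (\cos\theta, \sin\theta)$, the subcomplex $\mathcal{F}^\theta(\mathcal{C})_s$ (the subspace generated by the union in its definition) coincides with the span of those basis elements $e \in \mathcal{B}$ whose $(\Z \oplus \Z)$--grading $(a_e, b_e)$ satisfies $(a_e, b_e) \cdot \vec\theta \geq s$. One containment is immediate from the definitions; the other uses that for $\theta \in [0, \pi/2]$ the vector $\vec\theta$ has non-negative components, so $(a_e, b_e) \geq (a, b)$ componentwise implies $(a_e, b_e) \cdot \vec\theta \geq (a,b) \cdot \vec\theta$.

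With this reinterpretation, decompose $x = \sum_{(a,b) \in S} x_{(a,b)}$ into $(\Z \oplus \Z)$--homogeneous pieces and let $m := \min_{(a,b) \in S} (a,b) \cdot \vec\theta$. Each $x_{(a,b)}$ is a linear combination of basis elements of $\theta$--value $(a,b) \cdot \vec\theta \geq m$, so $x \in \mathcal{F}^\theta_m$, giving $\mbox{gr}_\theta(x) \geq m$. For the opposite inequality, choose $(a_0, b_0) \in S$ achieving the minimum; since $x_{(a_0, b_0)} \neq 0$, the basis expansion of $x$ contains at least one basis element of $\theta$--value exactly $m$ with nonzero coefficient, and therefore $x \notin \mathcal{F}^\theta_s$ for any $s > m$. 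Hence $\mbox{gr}_\theta(x) \leq m$, as desired.

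The only point requiring any care is the basis characterization of $\mathcal{F}^\theta_s$, where one invokes monotonicity of the pairing with $\vec\theta$ on the componentwise partial order; this is valid precisely because $\theta \in [0, \pi/2]$ places $\vec\theta$ in the first quadrant. Once that identification is in place, both inequalities in the single-cycle statement, and therefore the lemma, follow directly from the definitions, consistent with the authors' framing of Lemma \ref{lem:latticegraddef} as immediate.
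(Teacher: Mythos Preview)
Your proof is correct and is precisely an explicit unpacking of what the paper means when it declares the lemma ``immediate from the definitions''; the paper gives no further argument. Your identification of $\mathcal{F}^\theta_s$ with the span of basis elements of $\theta$--value at least $s$ (using that $\vec\theta$ lies in the closed first quadrant), followed by the min-over-support computation of $\mbox{gr}_\theta(x)$ for a single cycle, is exactly the intended reasoning.
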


Informally, we have a number of cycles in $\mathcal{C}$ representing $\eta$. Each such cycle is supported on some subset $S \subseteq \Z \oplus \Z$, and the collection, $\bS(\eta)$, of all such subsets is the information we need to compute gr$_\theta(\eta)$. 

The reader may find the following analogy\footnote{which arose in a conversation with Charlie Frohman} useful.  We can view the process of computing the grading of $\eta$ as a race, judged by $\theta \in [0,\pi/2]$. Each $S \in \bS(\eta)$ is a competing team; its ``time" ($\theta$--grading) is determined by its ``slowest" member (minimal $\theta$--grading among $(a,b) \in S$). The $\theta$--grading of $\eta$ is therefore the $\theta$--grading of the ``slowest" member of the ``fastest" team.
\vskip 10pt
We also note:
\begin{lemma} \label{lem:ZplusZtoR}
If $f: \mathcal{C} \rightarrow \mathcal{C}'$ is a $(\Z \oplus \Z)$--filtered chain homotopy equivalence between discrete, bounded $(\Z \oplus \Z)$--filtered chain complexes $\mathcal{C}, \mathcal{C}'$, then for each $\theta \in \left[0,\pi/2\right]$, \[f: \{\cF^\theta(\mathcal{C})_s\}_{s \in \R} \rightarrow \{\cF^\theta(\mathcal{C}')_s\}_{s \in \R}\] is an $\R$--filtered chain homotopy equivalence. Moreover, if $\mathcal{C}, \mathcal{C}'$ have finite filtered, graded bases then for each $\eta \neq 0 \in H_*(\mathcal{C})$ and each $\theta \in [0, \pi/2]$, we have \[\mbox{gr}_\theta(\eta) = \mbox{gr}_\theta(f_*(\eta)).\]
\end{lemma}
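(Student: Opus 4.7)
The plan is to reduce both statements to straightforward consequences of the way the $\R$--filtrations $\{\cF^\theta(\mathcal{C})_s\}_{s \in \R}$ are constructed from the underlying $(\Z\oplus\Z)$--filtrations. The key observation is that a $(\Z\oplus\Z)$--filtered map of degree $(0,0)$ automatically projects, for every $\theta \in [0,\pi/2]$, to an $\R$--filtered map of degree $0$; once this is established, both assertions follow from general chain-homotopy considerations. The earlier discussion in the excerpt already records that discreteness and boundedness pass through the $\ell_\theta$--projection, and that a finite filtered graded basis descends to a finite filtered graded basis for the projected complex, so the hypotheses needed to define $\mbox{gr}_\theta$ on homology are automatic.

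For part (1), fix $\theta \in [0,\pi/2]$ and recall
\[
\cF^\theta(\mathcal{C})_s \;=\; \bigcup_{(a,b)\cdot(\cos\theta,\sin\theta)\,\geq\, s} \cF(\mathcal{C})_{(a,b)}.
\]
Since $f$ is a $(\Z\oplus\Z)$--filtered chain map of degree $(0,0)$, we have $f(\cF(\mathcal{C})_{(a,b)}) \subseteq \cF(\mathcal{C}')_{(a,b)}$ for every $(a,b)$, and therefore
\[
f\bigl(\cF^\theta(\mathcal{C})_s\bigr) \;\subseteq\; \cF^\theta(\mathcal{C}')_s \quad \text{for every } s \in \R.
\]
The identical argument applied to $g$, $H$, and $H'$ shows that each of them also descends to an $\R$--filtered map of degree $0$, and the chain-homotopy identities $gf - \Id_{\mathcal{C}} = H\partial + \partial H$ and $fg - \Id_{\mathcal{C}'} = H'\partial' + \partial'H'$ carry over verbatim to the projected complexes. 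This establishes that $f$ is an $\R$--filtered chain homotopy equivalence for each $\theta$.

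For part (2), assume the finite filtered graded basis hypothesis, so that each $\R$--filtered complex at hand is finite-dimensional, discrete, bounded, and admits a grading in the sense of the earlier definition. If $x \in \mathcal{C}$ is any cycle representing $\eta$, then $f(x)$ represents $f_*(\eta)$, and since $f$ is $\R$--filtered of degree $0$ we have $\mbox{gr}_\theta(f(x)) \geq \mbox{gr}_\theta(x)$. Taking the maximum over representatives gives $\mbox{gr}_\theta(f_*(\eta)) \geq \mbox{gr}_\theta(\eta)$. The same argument applied to $g$ yields $\mbox{gr}_\theta(g_*(\zeta)) \geq \mbox{gr}_\theta(\zeta)$ for any nonzero $\zeta \in H_*(\mathcal{C}')$; specializing to $\zeta = f_*(\eta)$ and using that $g_* f_* = \Id$ at the level of homology (which follows from the existence of the chain homotopy $H$) yields the reverse inequality $\mbox{gr}_\theta(\eta) \geq \mbox{gr}_\theta(f_*(\eta))$. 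Combining the two inequalities produces the desired equality.

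The main obstacle here is essentially notational rather than mathematical: one must carefully track the filtration and grading conventions to verify that projection onto $\ell_\theta$ really does commute with the application of $f$, $g$, $H$, and $H'$. Once this commutativity is confirmed, part (2) is a classical \emph{sandwich} argument, bracketing $\mbox{gr}_\theta(\eta)$ between itself via the pair of filtered maps $f$ and $g$, using only that filtered maps of degree $0$ cannot decrease the grading of any cycle and hence cannot decrease the grading of any homology class.
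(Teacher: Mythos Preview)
Your proof is correct and is precisely the argument the paper has in mind: the paper's own proof is the single phrase ``Nearly immediate from the definitions,'' and what you have written is exactly the unpacking of that phrase. Both parts rely only on the observation that a degree--$(0,0)$ map for the $(\Z\oplus\Z)$--filtration is automatically degree--$0$ for each projected $\R$--filtration, together with the standard sandwich argument using $g_*f_* = \Id$ on homology.
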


\begin{proof} Nearly immediate from the definitions.
\end{proof}

\section{Topological Preliminaries} \label{sec:topprelim}

Let $A$ be a closed, oriented annulus, $I = [0,1]$ the closed, oriented unit interval. Via the identification 
\[A \times I= \{(r,\theta,z)\,\,\vline\,\,r \in [1,2], \theta \in [0, 2\pi), z\in [0,1]\} \subset (S^3 = \R^3 \cup \infty) ,\] any link, $L \subset A \times I$, may naturally be viewed as a link in the complement of a standardly imbedded unknot, $(U = z$--axis $\cup \,\,\infty) \subset S^3$. Such an {\em annular link} $L \subset A \times I$ admits a diagram, $\cP(L) \subset A,$ obtained by projecting a generic isotopy class representative of $L$ onto $A \times \{1/2\}$. 

For convenience, we shall view $\cP(L) \subset A$ instead as a diagram on $S^2 - \{\OO, \XX\}$, where $\XX$ (resp., $\OO$) are basepoints on $S^2$ corresponding to the inner (resp., outer) boundary circles of $A$. Note that if we forget the data of $\XX$, we may view $\cP(L)$ as a diagram on $\R^2 = S^2 - \{\OO\}$ of $L$, viewed as a link in $S^3$. 

We will focus particular attention in the present work on {\em annular braid closures}. Precisely, let $\sigma \in B_n$ be an $n$--strand braid. Then its {\em annular closure} is the annular link obtained by regarding the classical closure, $\widehat{\sigma}$, of the braid as a link in the complement of the braid axis, $U = z$--axis $\cup \,\, \infty$. That is, $\widehat{\sigma} \subset S^3 - N(U) \sim (A \times I)$.

We will also be interested in {\em oriented link cobordisms} between oriented links $(L,o), (L',o') \subset (A \times I) \subset S^3$: smoothly properly imbedded surfaces in $F \subset S^3 \times I$ with \[\partial F = \left((L,\overline{o}) \subset -S^3 \times \{0\}\right) \amalg \left((L,o) \subset S^3 \times \{1\}\right),\] considered up to isotopy rel boundary. Letting $(U = z$--axis $\cup \,\,\infty) \subset S^3$ as above, we will refer to $F$ as an {\em annular cobordism} (cf. \cite[Appx.]{SchurWeyl}) if  $F \cap (U \times I) = \emptyset$. 

In this case, in any {\em annular movie} (cf. \cite[Appx.]{SchurWeyl}) describing $F$, two {\em non-critical annular stills} separated by a single {\em elementary string interaction} differ by a Reidemeister move, birth, death, or saddle localized away from $\{\OO,\XX\}$. Accordingly, we will say that a planar isotopy (resp., a Reidemeister move, birth, death, or saddle) of an annular diagram $\cP(L)$ of an annular link $L \subseteq A \times I$ is {\em annular} if the local diagram describing the move is supported in a disk contained in $S^2 - \{\OO, \XX\}.$ If the local diagram cannot be made disjoint from $\{\OO,\XX\}$, the isotopy (resp., Reidemeister move, birth, death, or saddle) is said to be {\em non-annular}. Note that by transversality, saddle moves (odd-index critical points of $F$) may always be assumed annular, but planar isotopies, Reidemeister moves, births and deaths (even-index critical points) need not be. In particular, an {\em annular} birth (resp., death) is the addition (resp., deletion) of a {\em trivial} circle, and a {\em non-annular} birth (resp., death) is the addition (resp., deletion) of a {\em non-trivial} circle.

\subsection{Annular Khovanov-Lee complex}
From the data of the diagram $\cP(L) \subset S^2 - \{\OO, \XX\}$ of an oriented annular link $L \subset A \times I$, we will use a construction of Khovanov \cite{K}, along with ideas of Lee \cite{Lee}, Rasmussen \cite{Rasmussen_Slice}, Asaeda-Przytycki-Sikora \cite{APS} and L. Roberts \cite{LRoberts} (see also \cite{AnnularLinks, SchurWeyl}) to define a $(\Z \oplus \Z)$--filtered chain complex as follows.

Begin by temporarily forgetting the data of $\XX$ and construct the standard Lee complex $(\mathcal{C}, \partial^{Lee}),$ (using $\F = \C$ coefficients) associated to $\cP(L)$ as described in \cite{Lee}.

That is, choose an ordering of the crossings of $\cP(L)$ and form the so-called {\em cube of resolutions} of $\cP(L)$ as described in \cite[Sec. 4.2]{K}. This cube of resolutions determines a finite-dimensional bigraded vector space \[\mathcal{C} = \bigoplus_{i,j \in \Z} \mathcal{C}^{i,j}\] along with an endomorphism called the {\em Lee differential}, $\partial^{Lee}: \mathcal{C} \rightarrow \mathcal{C}$, which splits as a sum of two bigrading-homogeneous maps, $\partial$ and $\Phi$. The first of these is the Khovanov differential, and the second is the Lee deformation. Each is degree $1$ with respect to the ``i" (homological) grading, and their ``j" (quantum) degrees are $0$ and $4$, respectively \cite{Lee}. Khovanov \cite[Prop. 8]{K} proves that $\partial^2 = 0$, and Lee \cite[Sec. 4]{Lee} proves that $\Phi$ satisfies:

\begin{itemize}
	\item $\partial\Phi + \Phi\partial = 0$ and
	\item $\Phi^2 = 0$.	
\end{itemize}

The homology of the complex $(\mathcal{C},\partial)$ (resp., the complex $(\mathcal{C},\partial+\Phi)$) is an invariant of $L$ \cite[Thm. 1]{K} (resp., \cite[Thm. 4.2]{Lee}).

If we now remember the data of $\XX$, we obtain a third grading on the vector space underlying the Khovanov-Lee complex, as follows. Recall that there is a basis for $\mathcal{C}$ whose elements are in one-to-one correspondence with {\em oriented} Kauffman states (complete resolutions) of $\cP(L)$ (cf. \cite[Sec. 4.2]{JacoFest}). That is,  in the language of \cite{BN_Khov}, we identify a ``$v_+$" (resp., a ``$v_-$") marking on a component of a Kauffman state with a counter-clockwise (resp., clockwise) orientation on that component. We now obtain a third grading on the vector space underlying the complex by defining the ``$k$" grading of a basis element to be the algebraic intersection number of the corresponding oriented Kauffman state with any fixed oriented arc $\gamma$ from $\XX$ to $\OO$ that misses all crossings of $\cP(L)$. 

\begin{lemma} \label{lem:Leediffgr}The Lee differential decomposes into $4$ grading-homogeneous summands: \[\partial^{Lee} = (\partial_0 + \partial_-) + (\Phi_0 + \Phi_+)\] whose $(i,j,k)$ degrees are:
\begin{itemize}
	\item deg$(\partial_0) = (1,0,0)$
	\item deg$(\partial_-) = (1,0,-2)$
	\item deg$(\Phi_0) = (1,4,0)$
	\item deg$(\Phi_+) = (1,4,2)$
\end{itemize}
\end{lemma}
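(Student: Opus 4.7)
The plan is to combine the literature's $(i,j)$-bidegrees of $\partial$ and $\Phi$ with a direct annular case analysis on the $k$-grading. That $\partial$ has $(i,j)$-bidegree $(1,0)$ is proved in \cite[Prop.~8]{K} and that $\Phi$ has $(i,j)$-bidegree $(1,4)$ is proved in \cite[Sec.~4]{Lee}, so it remains to check that each nonzero summand of $\partial$ (resp.\ $\Phi$) either preserves the $k$-grading or decreases (resp.\ increases) it by exactly $2$. The decomposition $\partial = \partial_0 + \partial_-$ itself is essentially the content of Asaeda-Przytycki-Sikora \cite{APS} and L.~Roberts \cite{LRoberts}; the novelty here is the parallel splitting of $\Phi$.

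The topological input is a classification of saddle moves between annular resolutions. Each circle in a resolution of $\cP(L) \subset A$ is either trivial ($T$: bounds a disk in $A$) or non-trivial ($N$: isotopic to the core of $A$), and since every simple closed curve in $A$ represents $0$ or $\pm 1$ in $H_1(A)$, the only possible merge types are $T{+}T \to T$, $T{+}N \to N$, and $N{+}N \to T$, and the only possible split types are $T \to T{+}T$, $N \to T{+}N$, and $T \to N{+}N$. The $k$-grading of an oriented Kauffman state equals the signed count of its non-trivial circles: a $v_+$-marked (resp.\ $v_-$-marked) non-trivial circle contributes $+1$ (resp.\ $-1$), and every trivial circle contributes $0$. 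Lee's perturbation $\Phi$ is supported on the Frobenius merge term $v_- \otimes v_- \mapsto v_+$ and the Frobenius split term $v_+ \mapsto v_+ \otimes v_+$.

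The main step is then a short case-by-case computation: for each pair (topological saddle type, input marking) one compares the $k$-grading of the input to that of each nonzero output. For instance, on an $N{+}N \to T$ merge, $\Phi(v_-\otimes v_-) = v_+$ takes the input $k$-grading $(-1)+(-1) = -2$ to the output $k$-grading $0$, an increase of $2$; on a $T{+}N \to N$ merge, $\Phi(v_-\otimes v_-) = v_+$ similarly increases $k$ by $2$; on a $T \to N{+}N$ split, $\Phi(v_+) = v_+\otimes v_+$ takes input $k = 0$ to output $k = 1+1 = 2$; and every other nonzero $\Phi$-term preserves $k$. This yields $\Phi = \Phi_0 + \Phi_+$ with $(i,j,k)$-degrees $(1,4,0)$ and $(1,4,2)$, and the entirely analogous enumeration for the Khovanov merge and split formulas recovers $\partial = \partial_0 + \partial_-$ with $(i,j,k)$-degrees $(1,0,0)$ and $(1,0,-2)$.

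The only real obstacle is careful bookkeeping: for each merge or split one must correctly identify which factor of the Frobenius tensor product corresponds to which topological (trivial versus non-trivial) circle, so that the $k$-change is computed using the correct trivial-versus-non-trivial status on each side. Once this correspondence is fixed, the verification reduces to enumerating a small finite table.
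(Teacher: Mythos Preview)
Your approach matches the paper's exactly: cite Lee for the $(i,j)$-degrees, cite Roberts for the splitting of $\partial$, and do the trivial/nontrivial case analysis for $\Phi$. However, you have the split component of $\Phi$ wrong: Lee's comultiplication perturbation is $v_- \mapsto v_+ \otimes v_+$, not $v_+ \mapsto v_+ \otimes v_+$. (A sanity check on $j$-degree catches this: your version would raise $j$ by $2$ after the cube shift, not by $4$.) With the correct formula, the $N \to T{+}N$ split sends the input $v_-$ on $N$ (with $k=-1$) to $v_+$ on $T$ and $v_+$ on $N$ (with $k=0+1=+1$), so this case \emph{increases} $k$ by $2$; your assertion that it preserves $k$ is incorrect. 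The lemma's conclusion survives because $+2$ is still one of the two allowed shifts, and indeed the paper records the three split cases as having $k$-degree $0,2,2$, not the $0,0,2$ your formula would give.
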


\begin{proof}
Lee proved that $\partial$ (resp., $\Phi$) are degree $(1,0)$ (resp., degree $(1,4)$) with respect to the $(i,j)$ gradings, so we need only verify that each of these endomorphisms splits into two pieces according to their $k$--gradings. The details of the splitting of $\partial$ is given in the proof of \cite[Lem. 1]{LRoberts} (our ``$j$" is Roberts' ``$q$" grading, and our ``$k$" is his ``$f$" grading).

To see that $\Phi$ also splits as claimed, recall (see \cite[Sec. 2]{LRoberts}) that component circles of a Kauffman state are either {\em trivial} (intersect the arc $\gamma$ from $\XX$ to $\OO$ in an even number of points) or {\em nontrivial} (intersect $\gamma$ in an odd number of points). Correspondingly, there are $3$ types of merge/split saddle cobordisms between pairs of components of a Kauffman state:
\begin{enumerate}
	\item {\em trivial} + {\em trivial} $\longleftrightarrow$ {\em trivial} (even + even = even)\\
	\item {\em trivial} + {\em nontrivial} $\longleftrightarrow$ {\em nontrivial} (even + odd = odd)\\
	\item {\em nontrivial} + {\em nontrivial} $\longleftrightarrow$ {\em trivial} (odd + odd = even)
\end{enumerate}

Recall from \cite[Sec. 4]{Lee} that for a merge cobordism, $\Phi$ is $0$ on all basis elements except the one for which both merging circles are labeled $v_-$ (Lee's {\bf 1} is our $v_+$ and {\bf x} is our $v_-$); this generator is sent to the one where the merged circle is marked with a $v_+$ (and the markings on all other circles are preserved). It follows that the $k$ degree of this map in cases (1), (2), and (3) above is $0$, $2$, and $2$ respectively. We leave to the reader the (similarly routine) check that the $k$ degrees of the split cobordism components of $\Phi$ in cases (1), (2), (3) are also $0$, $2$, and $2$.
\end{proof}

\begin{corollary}
The $j$ and $j-2k$ gradings on $\mathcal{C}$ endow $(\mathcal{C},\partial^{Lee})$ with the structure of a $(\Z \oplus \Z)$--filtered complex.
\end{corollary}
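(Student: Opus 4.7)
The strategy is to use Lemma \ref{lem:Leediffgr} together with the general framework of Section \ref{sec:algprelim}: an $I$-graded complex is automatically $I$-filtered whenever its differential decomposes as a sum of pieces each of componentwise non-negative degree. So the plan reduces to a bookkeeping check after a change of coordinates on the $(j,k)$ plane.

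First I would define the candidate bifiltration explicitly. For $(a,b) \in \Z \oplus \Z$, set
\[
\cF_{(a,b)} := \bigoplus_{\substack{j \geq a \\ j-2k \geq b}} \mathcal{C}^{i,j,k}.
\]
Since the oriented Kauffman-state basis of $\mathcal{C}$ described just before Lemma \ref{lem:Leediffgr} is simultaneously homogeneous with respect to $i$, $j$, and $k$ (and hence $j-2k$), it descends to a filtered graded basis for this family of subspaces. Monotonicity $\cF_{(a,b)} \supseteq \cF_{(a',b')}$ whenever $(a,b) \leq (a',b')$ is immediate, so the only content is to verify that $\partial^{Lee}$ maps each $\cF_{(a,b)}$ into itself.

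This reduces to computing the $(j, j-2k)$-degree of each of the four summands from Lemma \ref{lem:Leediffgr}. A one-line calculation gives $\partial_0$ bidegree $(0,0)$, $\partial_-$ bidegree $(0,4)$, $\Phi_0$ bidegree $(4,4)$, and $\Phi_+$ bidegree $(4,0)$. All four are componentwise $\geq (0,0)$ in the partial order on $\Z \oplus \Z$, so $\partial^{Lee}$ is filtered of non-negative degree and the family $\{\cF_{(a,b)}\}$ is a descending $(\Z \oplus \Z)$-filtration as claimed.

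I do not expect any real obstacle. The only thing worth flagging is why this particular change of coordinates is used: Lemma \ref{lem:Leediffgr} is asymmetric in the $k$-grading (the summands have $k$-degrees $0$, $-2$, $0$, $+2$), so the $k$-grading alone does not yield a filtration. Passing to $j$ and $j-2k$ is exactly the linear combination that sends the ``mixed-sign'' pieces $\partial_-$ (with $k$-degree $-2$) and $\Phi_+$ (with $k$-degree $+2$) into opposite axes of the non-negative quadrant, so that after the coordinate change every summand of $\partial^{Lee}$ has non-negative degree simultaneously in both new coordinates. This is the entire content of the corollary.
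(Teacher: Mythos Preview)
Your proposal is correct and follows essentially the same approach as the paper: define $\cF_{(a,b)}$ as the span of basis elements with $(j,j-2k)$-grading at least $(a,b)$, then use Lemma~\ref{lem:Leediffgr} to check that $\partial^{Lee}$ is non-decreasing in both coordinates. Your version is simply more explicit---you compute the four bidegrees $(0,0)$, $(0,4)$, $(4,4)$, $(4,0)$ directly and add a helpful remark on why the coordinate change $(j,k)\mapsto(j,j-2k)$ is the natural one---whereas the paper just asserts the non-decreasing property and moves on.
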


\begin{proof}
For each $(a,b) \in \Z \oplus \Z$, define \[\mathcal{F}_{a,b} :=  \mbox{Span}_{\F}\{{\bf x} \in \mathcal{C} \,\,|\,\, \mbox{gr}_{(j,j-2k)}({\bf x}) \geq (a,b)\}.\]

Lemma \ref{lem:Leediffgr} tells us that $\partial^{Lee}$ is non-decreasing with respect to the $j$ and $j-2k$ gradings, so $\mathcal{F}_{a,b}$ is a subcomplex for each $(a,b) \in (\Z \oplus \Z)$. Moreover, $(a',b') \geq (a,b) \in \Z \oplus \Z$ implies $\mathcal{F}_{a',b'} \subseteq \mathcal{F}_{a,b}$, as desired.
\end{proof}

\begin{definition} Let {\bf x} be a $(j,k)$--homogeneous basis element of $\mathcal{C}$, and let $t \in [0,2]$. Define \[j_t({\bf x}):= j({\bf x}) - t\cdot k({\bf x}).\]
\end{definition}

\begin{corollary}
For every $t \in [0,2]$, the $j_t$ grading endows $(\mathcal{C}, \partial^{Lee})$ with the structure of a (discrete, bounded) $\R$--filtered complex equipped with a finite filtered graded basis.
\end{corollary}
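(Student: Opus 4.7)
The plan is to derive this directly from the preceding corollary (the $(\Z\oplus\Z)$--filtered structure coming from the $(j,\,j-2k)$ bigrading) by observing that, for $t \in [0,2]$, the grading $j_t$ is nothing more than a non-negative linear combination of $j$ and $j-2k$.

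First I would record the algebraic identity
$$j_t \;=\; j - tk \;=\; \left(1 - \tfrac{t}{2}\right)j \;+\; \tfrac{t}{2}(j - 2k),$$
so that for every $t \in [0,2]$ both coefficients are non-negative. Up to a positive rescaling of the real line, $j_t$ therefore agrees with the projection of the $(j,\,j-2k)$--grading onto a ray in the closed first quadrant, and so fits into the general framework producing the $\R$--filtrations $\{\cF^\theta(\mathcal{C})_s\}_{s \in \R}$ from the paragraphs preceding Definition \ref{defn:grsupport}.

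Next I would verify directly that $\partial^{Lee}$ is non-decreasing with respect to $j_t$: by Lemma \ref{lem:Leediffgr}, the $(j,k)$--degrees of the four summands $\partial_0, \partial_-, \Phi_0, \Phi_+$ are $(0,0)$, $(0,-2)$, $(4,0)$, $(4,2)$, so their $j_t$--degrees are $0$, $2t$, $4$, and $4-2t$, each of which is $\geq 0$ for $t\in[0,2]$. Consequently, for each $s \in \R$ the subspace
$$\cF^{j_t}_s \;:=\; \mathrm{Span}_{\F}\bigl\{{\bf x} \in \mathcal{C} \,\big|\, j_t({\bf x}) \geq s\bigr\}$$
is a subcomplex, giving the desired descending $\R$--filtration, and if $s \leq s'$ then $\cF^{j_t}_s \supseteq \cF^{j_t}_{s'}$.

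Finally, the oriented Kauffman state basis for $\mathcal{C}$ is $(j,k)$--homogeneous (hence $j_t$--homogeneous) and has finitely many elements, since $\cP(L)$ has only finitely many crossings. This supplies a finite filtered graded basis, from which discreteness and boundedness of the $\R$--filtration follow at once (as noted in the remark on finite filtered graded bases in Section \ref{sec:algprelim}). There is no real obstacle: the entire statement is formal once one spots the convex combination identity above; the only mild points to check are non-negativity of the $j_t$--degrees at the endpoints $t=0$ and $t=2$, where the inequalities become equalities for $\partial_-$ and $\Phi_+$ respectively, but the definition of ``filtered'' is non-strict, so this causes no issue.
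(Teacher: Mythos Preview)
Your argument is correct and follows essentially the same route as the paper: both invoke Lemma~\ref{lem:Leediffgr} to see that $\partial^{Lee}$ is non-decreasing in $j_t$, define the filtration by the spans $\{\mathbf{x}:j_t(\mathbf{x})\geq s\}$, and appeal to the finite distinguished basis for discreteness and boundedness. You add the explicit convex-combination identity and the computation of the four $j_t$--degrees, which the paper leaves implicit (the convex-combination observation appears only in the subsequent remark), but this is elaboration rather than a different strategy.
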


\begin{proof}
Lemma \ref{lem:Leediffgr} implies that $\partial^{Lee}$ is non-decreasing with respect to the $j_t$ grading for each $t \in [0,2]$. It follows that for each $a \in \R$, the subcomplexes \[\mathcal{F}_a := \mbox{Span}_{\F}\{{\bf x} \in \mathcal{C} \,\,|\,\, j_t({\bf x}) \geq a\}\] endow $\mathcal{C}$ with the structure of an $\R$--filtered complex. The finiteness of the distinguished filtered graded basis implies that the $\R$--filtration is discrete and bounded.
\end{proof}

\begin{remark} Note that the $j_t$--grading, for $t \in [0,2]$, does not exactly agree with the $\mbox{gr}_\theta$--grading coming from projecting to a line $\ell_\theta$ making an angle $\theta \in [0, \pi/2]$ with the horizontal axis, as described in Section \ref{sec:algprelim}. However, we have a bijective correspondence between values $t \in [0,2]$ and angles $\theta \in [0, \pi/2]$ given by the function $\theta(t) = tan^{-1}\left(\frac{t/2}{1- t/2}\right)$, and for a $\mbox{gr}_{(j_0, j_2)}$--homogeneous element ${\bf x} \in \mathcal{C}$, we have \[\mbox{gr}_{j_t}({\bf x}) = \sqrt{(1-t/2)^2 + (t/2)^2}\,\,\,\mbox{gr}_{\theta(t)}({\bf x}).\] Moreover, $\sqrt{(1-t/2)^2 + (t/2)^2} > 0$ for all $t \in [0,2]$, which tells us that for each $t \in [0,2]$ the $\R$--filtration induced by $\mbox{gr}_{j_t}$ is just a positive rescaling of the $\R$--filtration induced by $\mbox{gr}_{\theta(t)}$. So although the $\R$--filtrations are not precisely the same they are closely related.
\end{remark}

\begin{remark} \label{rmk:lattice} It will be convenient to plot the distinguished filtered graded basis elements of the annular Khovanov-Lee complex $(\mathcal{C}, \partial^{Lee})$ on the $\Z^2$ lattice in $\R^2$ with axes labeled $(j_0,j_2)$. Accordingly, we will often abuse notation and refer to the $j_t$-- or $k$--grading of a lattice point $(a,b) \in \Z^2$ when we mean the $j_t$-- or $k$--grading of a distinguished filtered graded basis element supported on $\{(a,b)\}$. In particular for $t \in [0,2]$, 
\[\mbox{gr}_{j_t}(a,b) = \left(1 - \frac{t}{2}\right)a + \left(\frac{t}{2}\right)b,\] and \[\mbox{gr}_k(a,b) = \frac{a-b}{2}.\]
\end{remark}

We are now ready to define the annular Rasmussen invariants of an oriented annular link. 

Recall that if $\cP(L) \subseteq S^2 - \OO \sim \R^2$ is a link diagram and $o$ is an orientation on $L$, then Lee \cite[Sec. 4]{Lee} (see also \cite{Rasmussen_Slice}) describes a canonical cycle $\mathfrak{s}_o \in \mathcal{C}(\cP(L))$ whose homology class $[\mathfrak{s}_o] \in H^{Lee}(L)$ is nonzero. Rasmussen used the $\Z$--filtration induced by the $j$ grading on $\mathcal{C}$ to define a knot invariant $s(K) \in 2\Z$ that is, essentially, the induced $j$ grading of this nonzero homology class:

\[s(K) := \mbox{gr}_j([\mathfrak{s}_o]) + 1 \in 2\Z\] Beliakova-Wehrli extended Rasmussen's definition to oriented links \cite{BW}:

\[s(L,o) := \mbox{gr}_j([\mathfrak{s}_o]) + 1 \in \Z.\]

\begin{remark} Beliakova-Wehrli's oriented link invariant is insensitive to orientation reversal. That is, if $\bar{o}$ is the orientation reverse of $o$, then $s(L,o) = s(L,\bar{o})$. Similarly, Rasmussen's knot invariant $s(K)$ does not depend on the orientation of $K$.
\end{remark} 

If $\cP(L) \subseteq S^2 - \{\OO, \XX\}$ is an {\em annular} link diagram and $o$ is an orientation on $L$, we have a discrete $\R$--filtration $\{\mathcal{F}^t(\mathcal{C}(\cP(L))_s\}_{s \in \R}$ associated to each $j_t$ grading, $t \in [0,2]$. Accordingly, we define the {\em annular Rasmussen invariants} as follows:

\begin{definition} \label{defn:annularRas} 
$d_t(L,o) := \mbox{gr}_{j_t}([\mathfrak{s}_o]) \in \R$
\end{definition}

There is also a natural involution $\Theta$ on $(\mathcal{C}, \partial^{Lee})$, previously described in a slightly different context in \cite[Prop.7.2, (3)]{QuiverAlgebras} and \cite[Lem. 2]{SchurWeyl}):

\begin{lemma} \label{lem:inv} Let $L \subset (A \times I) \subset S^3$ be an annular link, \[\cP(L) \subset (S^2 - \{\OO,\XX\}) \subset (S^2 - \{\OO\}) \sim \R^2\] a diagram for $L$, and \[\cP'(L) \subset (S^2 - \{\XX,\OO\}) \subset (S^2 - \{\XX\}) \sim \R^2\] the diagram obtained by exchanging the roles of $\OO$ and $\XX$. The corresponding map \[\Theta: (\mathcal{C}(\cP(L)), \partial^{Lee}) \rightarrow (\mathcal{C}(\cP'(L)), \partial^{Lee})\] is a chain isomorphism inducing an isomorphism \[H_*(\mathcal{C}(\cP(L), \partial^{Lee}) \cong H_*(\mathcal{C}(\cP'(L)), \partial^{Lee}).\]
\end{lemma}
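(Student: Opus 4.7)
The plan is to define $\Theta$ explicitly on the Kauffman-state basis of $\mathcal{C}(\cP(L))$ and to verify by a direct local check that it intertwines the Lee differentials. Each component circle of a Kauffman state of $\cP(L)\subset S^2-\{\OO,\XX\}$ is either \emph{trivial} (bounds a disk disjoint from $\{\OO,\XX\}$) or \emph{nontrivial} (separates $\OO$ from $\XX$). On a basis vector obtained by labelling each circle with $v_\pm$, I would define $\Theta$ to fix the underlying state and all labels on trivial circles, while swapping $v_+\leftrightarrow v_-$ on each nontrivial circle. The geometric motivation is that the convention $v_+\leftrightarrow$ ``counter-clockwise'' is inherited from whichever of $\OO,\XX$ is placed at infinity in the planar model $\R^2$: trivial circles bound disks disjoint from both basepoints and inherit the same planar orientation in either model $\R^2\cong S^2-\OO$ or $\R^2\cong S^2-\XX$, while a nontrivial circle bounds opposite disks in the two models and acquires opposite planar orientations.

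The main step is to check $\Theta\,\partial^{Lee}=\partial^{Lee}\,\Theta$ edge by edge in the resolution cube. A $\Z/2$-homology parity argument in $S^2-\{\OO,\XX\}$ (the sum of the boundary classes of a pair of pants must vanish) shows that each saddle belongs to one of six local types: merges $tt\to t$, $tn\to n$, $nn\to t$, and their reverses as splits. The two purely trivial cases $tt\to t$ and $t\to tt$ involve no nontrivial circles and are immediate. Each of the remaining four cases reduces to a short algebraic check in Lee's Frobenius structure (with the relation $v_-^2=v_+$). For example, in the $n\to tn$ split one computes that both $\Theta\circ\Delta$ and $\Delta\circ\Theta$ send $v_+$ to $v_+\otimes v_+ + v_-\otimes v_-$ and send $v_-$ to $v_+\otimes v_- + v_-\otimes v_+$, because the pair of summands of $\Delta(v_\pm)$ is exchanged by the swap on the nontrivial output factor. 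A byproduct worth highlighting is that $\Theta$ does \emph{not} commute with the Khovanov differential $\partial$ or with the Lee deformation $\Phi$ separately --- the failures cancel precisely because of the relation $v_-^2=v_+$, so it is essential that the lemma is stated for the sum $\partial^{Lee}=\partial+\Phi$.

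Since $\Theta$ is manifestly an $\F$-linear involution on the Kauffman-state basis, the chain-map property upgrades it to a chain isomorphism, inducing the stated isomorphism on $\partial^{Lee}$-homology. The main (but routine) obstacle I foresee is the case analysis in the four mixed merge/split cases; each check requires careful bookkeeping of which input (resp.\ output) circle is nontrivial and of how the swap acts on each summand of Lee's (co)multiplication.
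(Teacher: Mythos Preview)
Your proposal is correct and follows essentially the same approach as the paper: the paper defines $\Theta$ exactly as you do (swap $v_\pm$ on nontrivial circles, fix labels on trivial ones), observes $\Theta^2=\Id$, and cites \cite[Lem.~2--3]{SchurWeyl} for the chain-map verification that you propose to carry out directly via the six-case saddle analysis. Your observation that $\Theta$ commutes with $\partial^{Lee}=\partial+\Phi$ but not with $\partial$ or $\Phi$ separately is correct and is a nice point to make explicit.
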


\begin{proof}
Note that on generators of $\mathcal{C}$, the map $\Theta$ exchanges $v_{\pm}$ markings on {\em nontrivial} circles and preserves markings on {\em trivial} circles of each Kauffman state. See the proof of \cite[Lem. 2]{SchurWeyl}. The fact that $\Theta$ is a chain map on $(\mathcal{C}, \partial^{Lee})$ follows from \cite[Lem. 3]{SchurWeyl}. Since $\Theta^2 = \Id$, it is a chain isomorphism.
\end{proof}

\subsection{Annular braid closures, Plamenevskaya's invariant, and the $\sltwo$ action} \label{subsec:annbraid}
As previously mentioned, the annular Rasmussen invariants are particularly well-suited to studying annular braid closures equipped with their braid-like orientations. Explicitly, let $\sigma \in B_n$ be an $n$--strand braid and $\widehat{\sigma} \subset A \times I$ its annular closure. The braid-like orientation, $o_\uparrow$, of $\widehat{\sigma}$ is the one whose strands all wind positively around the braid axis. Its diagram winds counterclockwise about $\XX$ in $S^2 - \{\OO,\XX\}$.

When $L$ is an annular braid closure, the canonical Lee classes associated to the braid-like orientation $o_\uparrow$ and its reverse $o_\downarrow$ have nice descriptions in terms of Plamenevskaya's class \cite{Plam} and an $\sltwo$ action on the annular Khovanov-Lee complex \cite[Sec. 4]{SchurWeyl}. For the convenience of the reader, we briefly recall the relevant constructions here. In what follows, let $\mathcal{C}$ denote the $(i,j,k)$--graded vector space underlying the annular Khovanov-Lee complex associated to an oriented annular link. All relevant background and standard notation on the representation theory of the Lie algebra $\sltwo$ can be found in \cite[Sec. 2]{SchurWeyl}.

As in \cite[Sec. 4]{SchurWeyl}, let 
\begin{itemize}
\item $V := \mbox{Span}_\C\{v_+,v_-\}$ denote the defining representation of $\sltwo$, with gr$_{(j,k)}(v_{\pm}) = (\pm 1, \pm 1)$,
\item $V^* := \mbox{Span}_\C\{\overline{v}_+, \overline{v}_-\}$ its dual (where $\overline{v}_{\pm} := v_{\mp}^*)$, with gr$_{(j,k)}(\overline{v}_{\pm}) = (\pm 1, \pm 1)$, and
\item $W := \mbox{Span}_\C\{w_+,w_-\}$ be the trivial two-dimensional representation, with gr$_{(j,k)}(w_{\pm}) = (\pm 1, 0)$
\end{itemize}

Now let $K \subset S^2 - \{\OO,\XX\}$ be a Kauffman state (complete resolution) in the cube of resolutions of a diagram of $L \subset (A \times I)$, and suppose $K$ has $\ell_n$ nontrivial circles and $\ell_t$ trivial circles. Choose any ordering  $C_1, \ldots, C_{\ell_n}, C_{\ell_n + 1}, \ldots, C_{\ell_n + \ell_t}$ of the circles so all of the nontrivial circles are listed first. For $i \in \{1, \ldots, \ell_n\}$ let $X(C_i) \in \{0, \ldots \ell_n - 1\}$ denote the number of nontrivial circles of $K$ lying in the same component of $S^2 - C_i$ as $\XX$ and define \[\epsilon(C_i) := (-1)^{X(C_i)}.\] Then we assign to the Kauffman state $K$ the $\sltwo$ representation: \[\left(\bigotimes_{\epsilon(C_i) = 1} V\right) \otimes \left(\bigotimes_{\epsilon(C_i) = -1} V^*\right) \otimes \left(\bigotimes_{s=1}^{\ell_t} W\right),\] with $(i,j)$--grading shifts as described in \cite{K}, \cite{BN_Khov}. In \cite{SchurWeyl} it is shown that the $\sltwo$ action on $\mathcal{C}$ commutes with a particular summand of the Lee differential and hence can be used to endow the annular Khovanov homology of an annular link the structure of an $\sltwo$ representation.

Now suppose $L$ is the (oriented) annular braid closure of a braid $\sigma \in \Braid_n$. Then it is implicit in \cite{Plam} (see also \cite{GW_ColJones}, \cite[Prop. 2.5]{GN}) that the graded vector space underlying the Khovanov-Lee complex has a {\em unique} $\sltwo$ irrep of highest weight $n$ and, hence (since the ``$k$" grading on $\mathcal{C}$ coincides with the $\sltwo$ weight space grading) there is a unique generator of $\mathcal{C}$ whose ``$k$" grading is $-n$. This is the lowest-weight vector in the unique $(n+1)$--dimensional irreducible subrepresentation of $\mathcal{C}$. It is the distinguished basis element corresponding to marking each of the circles in the all-braidlike resolution of $\widehat{\sigma}$ (i.e., the oriented resolution for the braid-like orientation) with a $v_-$. This is the class Plamenevskaya denotes by $\tilde{\psi}(\widehat{\sigma})$ in \cite{Plam}. We will denote it by ${\bf v}_-$.

\begin{remark} Plamenevskaya shows that ${\bf v}_-$ is a cycle in the {\em Khovanov} complex $(\mathcal{C}, \partial)$, and (up to multiplication by $\pm 1$) its associated homology class $[{\bf v}_-] \in \Kh(\widehat{\sigma}) = H_*(\mathcal{C},\partial)$ is an invariant of the transverse isotopy class of $\widehat{\sigma}$. Note that ${\bf v}_-$ is {\em not} a cycle in the {\em Lee} complex $(\mathcal{C}, \partial + \Phi)$. 
\end{remark}

The canonical Lee classes associated to the two braid-like orientations have nice descriptions in terms of the Plamenevskaya class and the $\sltwo$--module structure. In the following, recall that $e^{(k)} := \frac{e^k}{k!}$ is the so-called {\em $k$--th divided power} of $e \in \sltwo$:

\begin{proposition} Let $\sigma \in B_n$, and let $\widehat{\sigma}$ be its an annular braid closure. If $n$ is even,
\begin{eqnarray*}
	\mathfrak{s}_{o_\uparrow}(\widehat{\sigma}) &=& \sum_{k=0}^n e^{(k)} {\bf v}_-\\
	\mathfrak{s}_{o_\downarrow}(\widehat{\sigma}) &=& \sum_{k=0}^n (-1)^{k} e^{(k)} {\bf v}_-
\end{eqnarray*}
If $n$ is odd, the identifications of $\mathfrak{s}_{o_\uparrow}$ and $\mathfrak{s}_{o_\downarrow}$ with the summations on the right are reversed.
\end{proposition}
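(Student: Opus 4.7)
The plan is to compute both $\mathfrak{s}_{o_\uparrow}(\widehat{\sigma})$ and the two sums on the right-hand side explicitly at the all-braidlike (oriented) resolution of the braid closure, and then match them factor-by-factor in the tensor decomposition from \cite{SchurWeyl}.

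First I would identify the oriented resolution of $\widehat{\sigma}$ for $o_\uparrow$: it consists of $n$ coherently oriented, concentric nontrivial circles $C_1, \ldots, C_n \subset S^2 \setminus \{\OO, \XX\}$, numbered from innermost (closest to $\XX$) to outermost. Then $X(C_i) = i - 1$, so $\epsilon(C_i) = (-1)^{i-1}$, and the corresponding tensor factors at this vertex of the cube alternate as $V, V^*, V, V^*, \ldots$. Applying Lee's rule, each $C_i$ is labeled $\mathfrak{a} = v_+ + v_-$ or $\mathfrak{b} = v_+ - v_-$ according to the checkerboard coloring induced by $o_\uparrow$; since the circles are nested and coherently oriented, these labels strictly alternate, and the outermost label is controlled by the parity of $n$.

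Next I would compute $\sum_{k=0}^n e^{(k)} {\bf v}_-$ directly. Since $e \cdot v_- = v_+$ on $V$ but $e \cdot \bar{v}_- = -\bar{v}_+$ on $V^*$ (a sign forced by the definition of the dual $\sltwo$-representation), a routine Leibniz/binomial expansion gives
\[\sum_{k=0}^n e^{(k)} {\bf v}_- \;=\; \bigotimes_{i=1}^n \bigl(u_i^- + \epsilon(C_i)\, u_i^+\bigr),\]
where $u_i^\pm$ denotes the distinguished basis vector on $C_i$. On odd-indexed ($V$) circles this factor equals $v_+ + v_- = \mathfrak{a}$, and on even-indexed ($V^*$) circles it equals $\bar{v}_- - \bar{v}_+ = -\bar{\mathfrak{b}}$, where $\bar{\mathfrak{b}} := \bar{v}_+ - \bar{v}_-$. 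Repeating the computation with alternating signs gives $\sum_{k=0}^n (-1)^k e^{(k)} {\bf v}_- = \bigotimes_i (u_i^- - \epsilon(C_i)\, u_i^+)$, which equals $-\mathfrak{b}$ on odd-indexed circles and $\bar{\mathfrak{a}} := \bar{v}_+ + \bar{v}_-$ on even-indexed circles.

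Finally I would match the two pictures, using the observation that reversing orientation $o_\uparrow \to o_\downarrow$ swaps $\mathfrak{a} \leftrightarrow \mathfrak{b}$ on every circle in Lee's labeling (the checkerboard flips under orientation reversal). For even $n$ the checkerboard from the first step places $\mathfrak{a}$ on odd-indexed circles and $\mathfrak{b}$ on even-indexed ones; after absorbing an overall sign $(-1)^{n/2}$ into the standard sign ambiguity of $\mathfrak{s}_{o_\uparrow}$, this is exactly $\sum_k e^{(k)} {\bf v}_-$, and the reversed orientation then matches $\sum_k (-1)^k e^{(k)} {\bf v}_-$. For odd $n$ the checkerboard is shifted by one, which swaps the labels on every circle relative to the even case and hence interchanges the roles of the two sums, producing the statement. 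The main obstacle is keeping three independent sources of signs mutually consistent, namely the $-1$ in the $\sltwo$ action on $V^*$, Lee's orientation-dependent checkerboard rule for $\mathfrak{a}$ versus $\mathfrak{b}$, and the parity-of-$n$ shift that causes the two identifications to swap; once conventions are pinned down the argument reduces to the factor-by-factor comparison sketched above.
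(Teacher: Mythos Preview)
Your approach is essentially the same as the paper's: both arguments identify the Lee labels on the nested circles of the braidlike resolution, expand $\sum_k e^{(k)}{\bf v}_-$ via the Leibniz rule on the tensor product (using $e^2 = 0$ on each factor), and compare factor by factor.

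However, your sign bookkeeping introduces a spurious $(-1)^{n/2}$. The point is that as elements of the underlying Khovanov vector space, $\overline{v}_\pm$ and $v_\pm$ are the \emph{same} vectors (the overline only records that the $\sltwo$ action is the dual one; the $(j,k)$--gradings are identical). Hence the factor $\overline{v}_- - \overline{v}_+$ you obtain on even-indexed circles is exactly Lee's label ${\bf b} = v_- - v_+$, not $-{\bf b}$, and there is no sign to absorb. Introducing the auxiliary symbol $\bar{\mathfrak{b}} := \bar{v}_+ - \bar{v}_-$ is what created the confusion. Relatedly, your remark that ``the outermost label is controlled by the parity of $n$'' is not quite right: in the paper's convention the outermost circle is always marked ${\bf b}$ for $o_\uparrow$, regardless of $n$; what the parity of $n$ controls is the label on the \emph{innermost} circle, equivalently whether it is the odd- or even-indexed circles that carry ${\bf a}$. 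Finally, the proposition asserts an exact equality of chain-level elements, and $\mathfrak{s}_o$ is a specific canonical cycle, not merely a class defined up to sign, so ``absorbing into the standard sign ambiguity'' is not available here. Once these conventions are straightened out, your computation matches the paper's and the argument goes through verbatim.
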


\begin{proof} By definition (cf. \cite[Sec. 4]{Lee} and \cite[Sec. 2.4]{Rasmussen_Slice}), we see that $\mathfrak{s}_{o_\uparrow}(\widehat{\sigma})$ is the cycle in the Lee complex where the outermost circle in the all-braidlike resolution has been marked with a {\bf b}, and the remaining circles are marked alternatingly with {\bf a}'s and {\bf b}'s from outermost to innermost. We now compare this description with the definition of the $\sltwo$ action on $\mathcal{C}$ as a tensor product representation of copies of the defining representation $V$ and its dual $V^*$, where circles are marked alternatingly with $V$ and $V^*$ from innermost to outermost. Indeed, recalling that 
\begin{itemize}
	\item ${\bf a} = v_- + v_+$ and ${\bf b} = v_- - v_+$ and
	\item in the defining representation $V$, $e(v_-) = v_+$, while in its dual $V^*$, $e(\overline{v}_-) = -\overline{v}_+$
\end{itemize} we see that if $n$ is even, we have \[s_{o_\uparrow}(\widehat{\sigma}) = (v_- + ev_-) \otimes (\overline{v}_- + e\overline{v}_-) \otimes \ldots \otimes (\overline{v}_- + e\overline{v}_-)\] while if $n$ is odd, we have \[s_{o_\uparrow}(\widehat{\sigma}) = (v_- - ev_-) \otimes (\overline{v}_- - e\overline{v}_-) \otimes \ldots \otimes (v_- - ev_-),\] where in the above, the tensor product factors from left to right correspond to markings of circles from innermost to outermost.

We now need a small bit of notation. Suppose $S$ is a $k$--element subset of $\{1, \ldots, n\}$, and $V_1 \otimes \ldots \otimes V_n$ is an $n$--factor tensor product representation of $\sltwo$. Then we will denote by $E_S$ the map that sends a decomposable vector $v_1 \otimes \ldots \otimes v_n$ to the decomposable vector $w_1 \otimes \ldots \otimes w_n$, where \[w_i = \left\{\begin{array}{cl} ev_i & \mbox{if $i \in S$}\\v_i & \mbox{if $i \not\in S$.}\end{array}\right.\]

When $n$ is even, we then see that \[\mathfrak{s}_{o_\uparrow}(\widehat{\sigma}) = \sum_{k=0}^n\,\,\sum_{\substack{S \subseteq \{1, \ldots, n\},\\ |S| = k}} E_S({\bf v}_-),\] and when $n$ is odd, we have \[\mathfrak{s}_{o_\uparrow}(\widehat{\sigma}) = \sum_{k=0}^n (-1)^k \sum_{\substack{S \subseteq \{1, \ldots, n\},\\ |S| = k}} E_S({\bf v}_-).\]

But it follows from the definition of the tensor product representation and the fact that $e^2(v_-) = 0$ (resp., $e^2(\overline{v}_-) = 0$) in $V$ (resp.,  $V^*$) that \[e^k({\bf v}_-) = k! \sum_{\substack{S \subseteq \{1, \ldots, n\},\\ |S| = k}} E_S({\bf v}_-),\] which tells us that when $n$ is even (resp., odd), we have $\mathfrak{s}_{o_\uparrow} = \sum_{k=0}^n e^{(k)}({\bf v}_-)$ (resp., $\mathfrak{s}_{o_\uparrow} = \sum_{k=0}^n (-1)^k e^{(k)}({\bf v}_-)$.)

Since $\mathfrak{s}_{o_\downarrow}$ is obtained from $\mathfrak{s}_{o_\uparrow}$ by replacing all ${\bf a}$ markings with ${\bf b}$ and vice versa, the desired conclusion follows.
\end{proof}

\section{Proof of Main Theorem} \label{sec:mainthm}
We are now ready to state and prove a detailed version of our main theorem.

\begin{theorem} \label{thm:main} Let $L \subset (A \times I)$ be an annular link with wrapping number $\omega$, let $o$ be an orientation on $L$, and let $t \in [0,2]$.
\begin{enumerate}
	\item $d_t(L,o)$ is an oriented annular link invariant.
	\item $d_{1-t}(L,o) = d_{1+t}(L,o)$ for all $t \in [0,1]$.
	\item $d_0(L,o) = d_2(L,o) = s(L,o) - 1.$
	\item Viewed as a function $[0,2] \rightarrow \mathbb{R}$, $d_t(L,o)$ is piecewise linear. Moreover, letting 
		\[m_{t}(L,o) := \mbox{lim}_{\epsilon \rightarrow 0^+} \frac{d_{t+\epsilon}(L,o) - d_{t}(L,o)}{\epsilon}\] denote the (right-limit) slope at $t$, we have $m_t(L,o) \in \{-\omega, -\omega+2, \ldots, \omega-2, \omega\}$ for all $t \in [0,2).$
	\item Suppose $(L,o)$ and $(L',o')$ are non-empty oriented annular links, and $F$ is an oriented cobordism from $(L,o)$ to $(L',o')$ for which each component of $F$ has a boundary component in $L$. Then if $F$ has $a_0$ even-index annular critical points, $a_1$ odd-index annular critical points, and $b_0$ even-index non-annular critical points, then \[d_t(L,o) - d_t(L',o') \leq (a_1-a_0) - b_0(1-t).\]
	\item Suppose $(L,o)$, $(L',o')$, and F are as in (5) above, and suppose that in addition each component of $F$ has a boundary component in $L'$. Then \[|d_t(L,o) - d_t(L',o')| \leq (a_1-a_0) - b_0(1-t).\]
\end{enumerate}
\end{theorem}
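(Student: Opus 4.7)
The plan is to prove Parts (1)--(4) and (6) relatively quickly by leveraging the structural tools already established in the paper, and to devote the main effort to Part (5).

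For Part (1), I would verify that the chain maps induced by annular Reidemeister moves are $(\Z\oplus\Z)$-filtered of degree $(0,0)$ with respect to the $(j, j-2k)$ bigrading and send $\mathfrak{s}_o$ to $\mathfrak{s}_o$ for the new diagram (modulo filtered boundaries); Lemma \ref{lem:ZplusZtoR} will then yield the invariance of $d_t(L,o)$. For Part (2), I would use the involution $\Theta$ of Lemma \ref{lem:inv}. A direct computation based on Lemma \ref{lem:Leediffgr} and the bigrading conventions ($v_\pm \mapsto (\pm 1, \pm 1)$, $w_\pm \mapsto (\pm 1, 0)$) shows that $\Theta$ sends a $(j,k)$-homogeneous element to one of bigrading $(j-2k,-k)$, and hence carries the $j_t$-grading to the $j_{2-t}$-grading. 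Checking that $\Theta$ maps $\mathfrak{s}_o^{\cP}$ to a nonzero scalar multiple of $\mathfrak{s}_o^{\cP'}$ (which follows directly from the $\mathbf{a}, \mathbf{b}$-description of the canonical Lee cycle) and combining with Part (1) gives $d_t(L,o) = d_{2-t}(L,o)$; setting $t=1\pm s$ yields the symmetry. Part (3) is then immediate: $d_0(L,o)=\mbox{gr}_j([\mathfrak{s}_o]) = s(L,o)-1$ by the Beliakova--Wehrli definition, and $d_0=d_2$ by Part (2) at $t=1$.

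For Part (4), Lemma \ref{lem:latticegraddef} expresses $d_t(L,o)$ as a finite max-min of affine functions of $t$, hence piecewise linear. The right-hand slope at $t_0$ equals $-k_0$, where $k_0$ is the $k$-grading of the lattice point realizing the inner minimum. Since the annular Khovanov complex carries an $\sltwo$-action whose top weight is bounded by the wrapping number $\omega$, we have $|k_0|\le\omega$; together with the fact that the $k$-grading has fixed parity matching $\omega$, this gives $m_t(L,o) \in \{-\omega, -\omega+2, \ldots, \omega\}$.

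Part (5) is the main technical step. I would decompose $F$ into elementary pieces: planar isotopies, Reidemeister moves, and Morse critical points (0-, 1-, and 2-handles, each annular or non-annular; saddles are annular by transversality). For each elementary piece, I would compute the $(j, j-2k)$-filtered degree of the induced chain map on the annular Khovanov--Lee complex. After the standard normalization, careful bookkeeping should produce a $j_t$-degree shift of $+1$ for each annular 0- or 2-handle, $-1$ for each saddle, and $1-t$ for each non-annular 0- or 2-handle (the latter reflecting the $k$-grading shift of $\pm 1$ associated to adding or removing a nontrivial circle); annular Reidemeister moves contribute $0$. Summing, $F_*$ is $j_t$-filtered of degree $(a_0 - a_1) + b_0(1-t)$. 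The hypothesis that each component of $F$ has a boundary in $L$ then ensures, via the standard Lee-theoretic argument adapted to the annular setting (cf.\ \cite{Rasmussen_Slice}), that $F_*[\mathfrak{s}_o]$ equals $[\mathfrak{s}_{o'}]$ up to a nonzero scalar. Applying Lemma \ref{lem:ZplusZtoR} will yield $d_t(L',o') \ge d_t(L,o) + (a_0 - a_1) + b_0(1-t)$, which rearranges to Part (5). For Part (6), I would apply Part (5) to the reversed cobordism $\overline{F}$ from $(L',o')$ to $(L,o)$: the counts $a_0, a_1, b_0$ are preserved under cobordism reversal (0- and 2-handles swap but both are even-index, and saddles remain saddles), and the added hypothesis guarantees each component of $\overline{F}$ has a boundary in $L'$, giving the opposite inequality.

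The main obstacle will be Part (5): determining the $j_t$-filtered degree of each elementary cobordism map (particularly at non-annular critical points, where the $k$-grading shift and the Lee Frobenius structure interact nontrivially) and verifying that $F_*[\mathfrak{s}_o]$ is a nonzero multiple of $[\mathfrak{s}_{o'}]$ under the component-connectivity hypothesis are the calculations requiring the most care.
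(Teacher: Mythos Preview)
Your proposal is correct and follows essentially the same route as the paper for every part: Proposition~\ref{prop:cobdegree} in the paper is exactly the elementary-cobordism degree computation you outline for Part~(5), the symmetry in Part~(2) is handled via $\Theta$ in both, and the piecewise-linearity argument in Part~(4) via the finite max--min of affine functions is identical.

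One small technical point on Part~(1): you propose to check that the Reidemeister chain maps are $(\Z\oplus\Z)$--filtered of degree~$0$ and send $\mathfrak{s}_o$ to $\mathfrak{s}_o$, and then invoke Lemma~\ref{lem:ZplusZtoR}. But Lemma~\ref{lem:ZplusZtoR} requires a filtered chain \emph{homotopy equivalence}, so you must also verify that the chain \emph{homotopies} Bar-Natan writes down are filtered of degree~$0$; the paper does exactly this check (using that the local pictures miss both basepoints). Alternatively, you could bypass Lemma~\ref{lem:ZplusZtoR} entirely: if both the Reidemeister map and its inverse are filtered of degree~$0$ and each sends $[\mathfrak{s}_o]$ to $[\mathfrak{s}_o]$, the two resulting inequalities on $\mbox{gr}_{j_t}$ give equality directly. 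Either route works, but as written your invocation of Lemma~\ref{lem:ZplusZtoR} has a small gap.
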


The following proposition will be crucial to the proof of our main theorem. See the beginning of Section \ref{sec:topprelim} for a discussion of annular and non-annular elementary string interactions and \cite{Lee}, \cite{Rasmussen_Slice} for a description of the chain maps on the Lee complex associated to each of these elementary string interactions.

\begin{proposition} \label{prop:cobdegree} For $t \in [0,2]$, the filtered $j_t$ degree of the Lee chain map associated to 
\begin{enumerate}
	\item an annular elementary saddle cobordism (odd index critical point) is $-1$,
	\item an annular birth/death (even index critical point) is $1$,
	\item a non-annular birth/death (even index critical point) is $1-t$, and
	\item an annular Reidemeister move (product cobordism) is $0$.
\end{enumerate}
\end{proposition}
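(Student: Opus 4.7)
The plan is to identify an explicit filtered Lee chain map for each of the four elementary string interactions, decompose it into $(j,k)$-bigraded summands, and show that every summand has $j_t$-degree at least the claimed value. The $j$-degree portions of these statements---saddle $-1$, birth/death $+1$, Reidemeister $0$---are classical from Khovanov and Rasmussen's work, so the real content is the behaviour of the annular grading $k$.

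For case (1), the saddle chain map at a crossing is, up to the homological-grading shift, a single edge map in the cube of resolutions of either of the two adjacent diagrams. Lemma \ref{lem:Leediffgr} says that $\partial$ decomposes as $\partial_0 + \partial_-$ of $(i,j,k)$-degrees $(1,0,0)$ and $(1,0,-2)$, so the saddle map splits into $(j,k)$-homogeneous pieces of bidegrees $(-1, 0)$ and $(-1, -2)$, contributing $j_t$-values $-1$ and $-1+2t$, whose minimum on $[0,2]$ is $-1$. The $\Phi$-pieces one must add to promote this to an actual chain map for $\partial^{Lee}$ have $(i,j,k)$-degrees $(1,4,0)$ and $(1,4,2)$, so their $j_t$-values are $4$ and $4-2t$, both nonnegative on $[0,2]$; they cannot lower the filtered degree.

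For cases (2) and (3), I would take the unit $x \mapsto x \otimes v_+$ (birth) and the counit $x \otimes v_+ \mapsto 0$, $x \otimes v_- \mapsto x$ (death). Each commutes with $\partial^{Lee}$ on the nose, since the newly created or destroyed circle is disjoint from every crossing and the Lee differential therefore acts trivially on its $v_{\pm}$ label. Both have $j$-shift $+1$; the $k$-shift depends only on whether the circle is trivial or nontrivial, since $v_{\pm}$ carries $k$-grading $0$ on a trivial circle but $\pm 1$ on a nontrivial one. This yields $(j,k)$-shift $(+1, 0)$ in the annular case (so $j_t = 1$) and $(+1, +1)$ in the non-annular case (so $j_t = 1-t$). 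For case (4), the standard Reidemeister chain equivalences are supported in a disk $D \subset S^2 \setminus \{\OO, \XX\}$; choosing the reference arc $\gamma$ from $\XX$ to $\OO$ to avoid $D$, every resolution-circle's intersection parity with $\gamma$ is determined by its configuration outside $D$, so the Reidemeister maps are $k$-homogeneous of degree $0$ in addition to being $j$-homogeneous of degree $0$, yielding $j_t = 0$.

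The only real obstacle lies in the saddle case: one must verify that the $\Phi$-corrections needed to turn the Khovanov TQFT saddle into a genuine chain map on the Lee complex do not degrade the filtered $j_t$-degree below $-1$. This is immediate from the bigrading bounds in Lemma \ref{lem:Leediffgr} as observed above, so the proof ultimately reduces to a straightforward case-by-case verification once the explicit chain maps have been fixed.
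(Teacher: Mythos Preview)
Your treatment of cases (1)--(3) is essentially the paper's own, modulo a harmless arithmetic slip: having correctly shifted the $\partial$-pieces of the saddle map by $-1$ in $j$, you should do the same for the $\Phi$-pieces, obtaining $j_t$-degrees $3$ and $3-2t$ rather than $4$ and $4-2t$. The bound $\geq -1$ on $[0,2]$ still holds.

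The genuine gap is in case (4). Your claim that the Lee Reidemeister chain maps are $k$-homogeneous of degree $0$ is false. The observation that ``every resolution-circle's intersection parity with $\gamma$ is determined by its configuration outside $D$'' only tells you that the trivial/nontrivial type of each circle is unchanged by the local move; it says nothing about what the chain map does to the \emph{labelings} $v_\pm$ on those circles. The Lee Reidemeister maps are built from the Lee Frobenius algebra multiplication and comultiplication and can flip labels on a nontrivial arc passing through $D$, shifting $k$ by $\pm 2$. Concretely, for the Lee R1 map $\rho_1'$ when the arc in the local picture belongs to a nontrivial circle, the paper records summands of $(j,k)$-bidegree $(0,-2)$ and $(4,2)$ in addition to the $(0,0)$ term, so the map is not $k$-homogeneous.

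What you actually need to show is that every $(j,k)$-homogeneous summand of each Reidemeister map has $j_t$-degree $\geq 0$ for all $t\in[0,2]$. The paper does this by writing the Reidemeister maps (following Bar-Natan) as linear combinations of compositions of the annular saddle and annular birth/death maps already handled in (1) and (2), and then checking case by case that the lowest-degree summand has $j_t$-degree $0$ while the remaining summands (e.g.\ of degree $2t$, $4-2t$, $4$) are nonnegative on $[0,2]$.
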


\begin{proof} [Proof of Proposition \ref{prop:cobdegree}]
\begin{enumerate}
	\item Recall (\cite{Lee}, \cite[Sec. 2.4]{Rasmussen_Slice}) that the Lee chain map associated to a saddle cobordism agrees with the split/merge map involved in the definition of the Lee differential. (The only difference is that the $+1$ $j$--grading shift for each successive (co)homological grading in the Lee complex ensures that the Lee differential has filtered degree $0$, not $-1$.) We can now appeal to Lemma \ref{lem:Leediffgr}: depending on the configuration of the splitting/merging circles, the $j_t$ degree of the associated map is $-1$ (resp., $-1 + 2t, 3,$ or $3-2t$) for configurations contributing to $\partial_0$ (resp., to $\partial_-, \Phi_0,$ or $\Phi_+$). Since each of these degrees is bounded below by $-1$ on the interval $t \in [0,2]$, we conclude that a saddle cobordism map has filtered degree $-1$.
	\item The Lee chain map associated to a cup is the map $\iota$ from \cite{Lee} ($\iota'$ from \cite{Rasmussen_Slice}), and the map associated to a cap is the map $\epsilon$ from \cite{Lee} ($\epsilon'$ from \cite{Rasmussen_Slice}). If the cup/cap is {\em annular}, the $(j,k)$ degree of the map is $(1,0)$, hence the $j_t$ degree is $1$, as desired.
	\item If the cup/cap is {\em non-annular}, the $(j,k)$ degree of the map is $(1,1)$, hence the $j_t$ degree is $1-t$, as desired.
	\item The Lee chain maps associated to Reidemeister moves--each of which induces an isomorphism on Lee homology--are given in \cite{Lee} and \cite[Sec. 6]{Rasmussen_Slice}. If the Reidemeister move is {\em annular}, then none of the local diagrams contain either basepoint $\XX$ or $\OO$. Noting that each of these maps is therefore a linear combination of compositions of the annular cobordism maps described in parts (1) and (2) above (cf. \cite[Proof of Thm. 1]{BN_Tangles}), it is then straightforward to check in each case that the filtered $j_t$ degree of each of the annular Reidemeister maps is $0$. As an example, consider the map $\rho_1'$ described in \cite[Sec. 6]{Rasmussen_Slice}. If the RI move is annular, the circle on the RHS of \cite[Fig. 6]{Rasmussen_Slice} is trivial, and hence the lowest degree term with respect to the $j_t$ grading is always $0$. Note that there are higher order terms of degree $4-2t$ and $2t$ when the arc in the local diagram on the LHS belongs to a nontrivial circle, and there is a higher order term of degree $4$ when the arc belongs to a trivial circle, but in both cases the lowest degree term is $0$.\end{enumerate}
\end{proof}

\begin{proof} [Proof of Theorem \ref{thm:main}]
\begin{enumerate}
	\item Two diagrams representing isotopic (oriented) annular links will be related by a finite sequence of annular isotopies and annular Reidemeister moves: that is, isotopies and Reidemeister moves of the diagram that never cross either of the marked points $\XX$, $\OO$. By  Lemma \ref{lem:ZplusZtoR}, it will therefore suffice to show that the chain maps on the Lee complex associated to each annular Reidemeister move induces a $\Z \oplus \Z$--filtered chain homotopy equivalence. 
	
In \cite[Proof of Thm. 1]{BN_Tangles}, Bar-Natan explicitly defines chain maps and chain homotopies yielding homotopy equivalences associated to each Reidemeister move of the formal complexes associated to links over the category $Cob^3_{/l}(\emptyset)$. These yield chain homotopy equivalences of the corresponding Lee complexes, by viewing Bar-Natan's maps as morphisms in the category $Cob_1(\emptyset)$ described in \cite[Sec. 2]{BN_Morr} and then applying the functor $\mbox{Hom}(\emptyset, -)$. Moreover, after applying the functor, the chain maps agree with those defined by Lee \cite{Lee} (see also \cite[Sec. 6]{Rasmussen_Slice}). We already confirmed in Proposition \ref{prop:cobdegree}, part (4), that these chain maps are, indeed, degree $0$ with respect to the $j_t$ grading, for $t = 0,2$. 

It therefore remains to verify that each of the chain {\em homotopies} Bar-Natan defines in this way has filtered degree $0$ with respect to the $j_t$ grading, for $t=0,2$. Just as in the proof of Proposition \ref{prop:cobdegree}, the relevant observation here is that if a Reidemeister move is {\em annular}, then none of the local diagrams in \cite[Figs. 5,6,7]{BN_Tangles} contain either basepoint $\XX$ or $\OO$. This will insure that the lowest degree term of each chain homotopy with respect to the $j_t$ grading is $0$.

For example the chain homotopies associated to the annular Reidemeister II move, pictured in \cite[Fig. 7]{BN_Tangles}, are scalar multiples of the maps $\iota'$ and $\epsilon'$, respectively. They therefore have $j$ degree $0$ (once we account for the $+1$ $j$--grading shifts in the successive homological gradings of the complex) and $k$ degree $0$ (because the $\XX$ and $\OO$ basepoints are not present in the local diagram).

Note that Bar-Natan's proof of the homotopy equivalence of complexes related by a Reidemeister III move is slightly indirect, but again each of the maps involved in \cite[Lem. 4.4, 4.5]{BN_Tangles} have filtered $j_t$ degree $0$ for $t = 0,2$ when the Reidemeister III move is annular, since the basepoints $\OO$ and $\XX$ are absent from the local diagrams. 
	\item The involution $\Theta$ described in Lemma \ref{lem:inv} induces a $\Z/2\Z$ symmetry on $\mathcal{C}^{Lee}$ that exchanges the roles of the $j_{1-t}$ and $j_{1+t}$ gradings for all $t \in [0,1]$. That is, for each distinguished filtered graded basis element ${\bf x} \in \mathcal{C}^{Lee}$, $\Theta({\bf x})$ is also a distinguished filtered graded basis element, and $\mbox{gr}_{j_{1-t}}(\Theta({\bf x})) = \mbox{gr}_{j_{1+t}}({\bf x})$ for all $t \in [0,1]$. This fact follows from (the proof of) \cite[Lem. 2]{SchurWeyl}, noting (cf. \cite[Rmk. 2]{SchurWeyl}) that the $j'$ grading there matches our $j_1$ grading.\footnote{Indeed, recalling that $\mathcal{C}^{Lee}$ is an $\sltwo$ representation, the $\Z/2\Z$ symmetry on $\mathcal{C}^{Lee}$ is a manifestation of the symmetry in $\sltwo$ coming from exchanging the roles of $e$ and $f$. See \cite[Lem. 5]{SchurWeyl}.} Moreover, from the definitions of $\Theta$ and the canonical Lee generator $\mathfrak{s}_o$ associated to an orientation $o$ one readily verifies that $\Theta(\mathfrak{s}_o) = \pm \mathfrak{s}_o$. 

It follows that if $\eta \in \mathcal{C}^{Lee}(L,o)$ (resp., $\Theta(\eta)$) is a cycle in $\mathcal{C}^{Lee}(L,o)$ representing $[\mathfrak{s}_o]$ and realizing its $j_{1-t}$ grading (that is, $j_{1-t}(\eta) = j_{1-t}[\mathfrak{s}_o]$), then $\Theta(\eta) \in \mathcal{C}^{Lee}(L,o)$ (resp., $\eta$) also represents $\pm [\mathfrak{s}_o]$ and realizes its $j_{1+t}$ grading. So $d_{1-t}(L,o) = d_{1+t}(L,o)$ for all $t \in [0,1]$, as desired.
	\item Let $(L,o) \subset A \times I \subset S^3$ be an oriented link. By definition (cf. \cite[Defn. 3.4]{Rasmussen_Slice}, \cite[Sec. 6]{BW}), \[s(L,o) = \frac{1}{2}\left(\mbox{gr}_j[\mathfrak{s}(L,o)-\mathfrak{s}(L,\overline{o})] + \mbox{gr}_j[\mathfrak{s}(L,o)+\mathfrak{s}(L,\overline{o}]\right).\] Moreover, we know:
	\begin{itemize}
		\item $\left|\mbox{gr}_j[\mathfrak{s}(L,o)-\mathfrak{s}(L,\overline{o})] - \mbox{gr}_j[\mathfrak{s}(L,o)+\mathfrak{s}(L,\overline{o}]\right| = 2$ and
		\item $\mbox{gr}_j[\mathfrak{s}(L,o)] = \mbox{gr}_j[\mathfrak{s}(L,\overline{o})] =\min\left\{\mbox{gr}_j[\mathfrak{s}(L,o)-\mathfrak{s}(L,\overline{o})],  \mbox{gr}_j[\mathfrak{s}(L,o)+\mathfrak{s}(L,\overline{o}]\right\}.$
	\end{itemize}
	So $d_0(L,o) = s(L,o) - 1$, as desired. It follows from part (2) above that $d_2(L,o) = s(L,o) - 1$ as well.
	\item Note that $\mathcal{C}^{Lee}$ has a distinguished finite basis $\mathcal{B}$ that is homogeneous with respect to the $(j,k)$ gradings, hence with respect to the $(j_0, j_2)$ gradings. Moreover, since the Lee differential is monotonic (non-decreasing) with respect to the $(j_0,j_2)$--grading, $\mathcal{B}$ is a filtered graded basis for an induced $\Z \oplus \Z$--filtration on $\mathcal{C}^{Lee}$. As in Section \ref{sec:algprelim} we can therefore plot the generators of $\mathcal{C}^{Lee}$ on $\Z \oplus \Z \subset \R^2$, with axes labeled $(j_0, j_2)$ and consider, for the canonical Lee class $[\mathfrak{s}_o] \neq 0 \in H_*(\mathcal{C}^{Lee})$, the set $\mathbb{S}([\mathfrak{s}_o])$ of subsets of $S \subset \Z \oplus \Z$ for which there exists a representative of $[\mathfrak{s}_o]$ supported on $S$ (see Defns. \ref{defn:grsupport}, \ref{defn:subseteta}). Since $\mathcal{B}$ is finite, $\mathbb{S}(\eta)$ is necessarily finite for each $\eta \neq 0 \in H_*(\mathcal{C}^{Lee})$.

Exactly as in Lemma \ref{lem:latticegraddef},\footnote{Note that we have plotted ${\bf x} \in \mathcal{B}$ on the $(j_0,j_2)$ lattice, so if ${\bf x} \in \mathcal{B}$ is associated to lattice point $(a,b) \in \Z \oplus \Z$, then $\mbox{gr}_j({\bf x}) = a$ and $\mbox{gr}_k({\bf x}) = \frac{a-b}{2}.$} we see that for each $\eta \neq 0 \in H_*(\mathcal{C}^{Lee})$ and each $t \in [0,2]$, 
\[\mbox{gr}_{j_t}(\eta) = \max_{S \in \mathbb{S}(\eta)}\left\{\min_{(a,b) \in S}\left\{a - t\left(\frac{a-b}{2}\right)\right\}\right\}.\]

Now fix $\eta \neq 0 \in H_*(\mathcal{C}^{Lee})$ and consider the behavior of $\mbox{gr}_{j_t}(\eta)$ as $t \in [0,2]$ varies. For each $(a,b) \in \Z \oplus \Z$, the function $a - t\left(\frac{a-b}{2}\right)$ is linear in $t$. Moreover, each $S \in \mathbb{S}(\eta)$ contains finitely many $(a,b) \in \Z \oplus \Z$, so the function $\min_{(a,b) \in S}\{a - t\left(\frac{a-b}{2}\right)\}$ is piecewise linear in $t \in [0,2]$ with finitely many discontinuities. Finally, there are finitely many $S \in \mathbb{S}(\eta)$, so the function $\max_{S \in \mathbb{S}(\eta)}\left\{\min_{(a,b) \in S}\left\{a - t\left(\frac{a-b}{2}\right)\right\}\right\}$ is also piecewise linear with finitely many discontinuities. This proves the first part of the statement.

To prove the second part of the statement, we note that by the discussion above, $m_t(L,o)$ is always equal to $-\frac{a-b}{2}$ for some $(a,b) \in S$, $S \in \mathbb{S}([\mathfrak{s}_o])$. But any chain ${\bf x} \in \mathcal{C}^{Lee}$ supported in lattice point $(a,b) \in \Z^2$ has $\mbox{gr}_k({\bf x}) = \frac{a-b}{2}.$ Moreover, for a link $L$ with wrapping number $\omega$, the $k$--gradings of distinguished basis elements of $\mathcal{C}^{Lee}(L,o)$ lie in the set $\{-\omega, -\omega+2, \ldots, \omega-2,\omega\}$ (cf. \cite[Sec. 4.2]{JacoFest}). This proves the second statement.

	\item Suppose $F$ is an orientable cobordism from $(L,o)$ to $(L',o')$ for which each component of $F$ has a boundary component in $(L,o)$.  Then $F$ has no closed components, and the orientation on $F$ is uniquely determined by the orientation $o$ on $L$. Hence, by \cite[Prop. 4.1]{Rasmussen_Slice}, the map $\phi_F$ on Lee homology induced by $F$ sends $[\mathfrak{s}_o]$ to a nonzero multiple of $[\mathfrak{s}_{o'}]$. Now fix $t \in [0,2]$ and let ${\bf x} \in \mathcal{C}^{Lee}(L)$ be a cycle representing $[\mathfrak{s}_o]$ for which $\mbox{gr}_{j_t}({\bf x}) =\mbox{gr}_{j_t}[\mathfrak{s}_o]$. Perturb $F$ slightly so it is Morse, with $a_0$ even-index annular critical points, $a_1$ odd-index annular critical points, and $b_0$ even-index non-annular critical points.\footnote{Note that by transversality all odd-index critical points are necessarily annular.}  By Proposition \ref{prop:cobdegree}, we then have \[\mbox{gr}_{j_t}(\phi_F({\bf x})) = d_t(L,o) + a_0 - a_1 + b_0(1-t),\] so since $[\phi_F({\bf x})] = c[\mathfrak{s}_{o'}]$ for $c \neq 0$, we have $\mbox{gr}_{j_t}(\phi_F({\bf x})) \leq \mbox{gr}_{j_t}[\mathfrak{s}_{o'}] = d_t(L',o')$, so \[d_t(L,o) - d_t(L',o') \leq (a_1-a_0) - b_0(1-t).\]
	\item Apply part (5) to the cobordism from $(L',o')$ to $(L,o)$ obtained by running $F$ in reverse. Since the parity of the index of the critical points doesn't change, we get the other inequality: \[d_t(L',o') - d_t(L,o) \leq (a_1-a_0) - b_0(1-t).\]

\end{enumerate}

\end{proof}

\begin{remark} \label{rmk:kgrading} Implicit in the proof of part (4) of Theorem \ref{thm:main} is the observation that $m_t(L,o)$ is $-1$ times the $k$--grading of a lattice point $(a,b)$ that determines the $j_t$ grading of $[\mathfrak{s}_o]$. That is, \[m_t(L,o) = -\frac{a-b}{2}\] for a lattice point $(a,b)$ satisfying  $\mbox{gr}_{j_t}(a,b) = \mbox{gr}_{j_t}[\mathfrak{s}_o]$. Moreover, $(a,b) \in S$ for some $S \in \mathbb{S}([\mathfrak{s}_o])$, and $\mbox{gr}_{j_t}(a,b)$ is minimum among all lattice points in $S$.
\end{remark}	

As we will have particular interest in annular and non-annular cobordisms between annular braid closures, we recall the following definition, made by Hughes, generalizing a notion due to Kamada \cite{Kamada} and Viro \cite{Viro} (see also Rudolph \cite{Rudolph}).

\begin{definition} \cite{Hughes} \label{defn:braidedcob} A {\em braided cobordism} is a smoothly, properly embedded surface \[F \subset S^3 \times [0,1]\] on which the projection $pr_2: S^3 \times [0,1] \rightarrow [0,1]$ restricts as a Morse function with each regular level set $F \cap (S^3 \times \{t\})$ a closed braid in $S^3 \times \{t\}$.
\end{definition}

\begin{remark} As noted in \cite[Sec. 2.2]{Hughes}, a {\em singular still} of a {\em movie presentation} of a braided cobordism $F$ will change the diagram by one of:
\begin{enumerate}
	\item Addition or deletion of a single loop around $\XX$ disjoint from the rest of the diagram,
	\item Addition or deletion of a single crossing between adjacent strands in the braid diagram,
	\item A single braid-like Reidemeister move of type II or III.
\end{enumerate}
\end{remark}

\begin{definition} We will say that a braided cobordism $F$ from $\widehat{\sigma}_0 = F \cap (S^3 \times \{0\})$ to $\widehat{\sigma}_1 = F \cap (S^3 \times \{1\})$ is {\em braid-orientable} if it admits an orientation compatible with the braid-like orientations of $\widehat{\sigma}_0$ and $\widehat{\sigma}_1$.
\end{definition}

\begin{corollary} \label{cor:braidedcob}
\begin{enumerate}
	\item If $\sigma_0 \in B_{n_0}$ and $\sigma_1 \in B_{n_1}$ are braids ($n_0 \in \Z^+, n_1 \in \Z^{\geq 0}$), and $F$ is a braid-orientable braided cobordism from $\widehat{\sigma}_0$ to $\widehat{\sigma}_1$ with $a_1$ odd-index (annular) critical points and $b_0$ even-index (non-annular) critical points, and  each component of $F$ has a component in $\sigma_0$, then \[d_t(\widehat{\sigma}_0) - d_t(\widehat{\sigma}_1) \leq a_1 - b_0(1-t).\]
	\item If $\sigma_0, \sigma_1$ are as above, and in addition each component of $F$ has a component in $\sigma_1$, then \[|d_t(\widehat{\sigma}_0) - d_t(\widehat{\sigma}_1)| \leq a_1 - b_0(1-t).\]
\end{enumerate}
\end{corollary}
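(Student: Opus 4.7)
The plan is to reduce Corollary \ref{cor:braidedcob} to parts (5) and (6) of Theorem \ref{thm:main} by showing that the term $a_0$ (even-index \emph{annular} critical points) vanishes for any braided cobordism. Indeed, by the description of singular stills of a braided cobordism movie summarized in the remark after Definition \ref{defn:braidedcob}, every even-index critical point of $F$ is the birth or death of a single loop around the braid axis $\XX$. Such a loop is, by definition, a \emph{nontrivial} circle in the annular diagram, so every birth/death is \emph{non-annular}. Thus for a braided cobordism we automatically have $a_0 = 0$, and the total count of even-index critical points equals $b_0$.

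Next I would verify that the orientability hypotheses needed to apply Theorem \ref{thm:main}(5)--(6) are satisfied. Because $F$ is braid-orientable, the braid-like orientations $o_\uparrow$ on $\widehat{\sigma}_0$ and $\widehat{\sigma}_1$ are the restrictions of a single global orientation on $F$; in particular, $(F,o)$ is an oriented cobordism from $(\widehat{\sigma}_0, o_\uparrow)$ to $(\widehat{\sigma}_1, o_\uparrow)$ in the sense of Section \ref{sec:topprelim}. The hypothesis that every component of $F$ has a boundary component in $\widehat{\sigma}_0$ rules out closed components of $F$, so Theorem \ref{thm:main}(5) applies.

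Substituting $a_0 = 0$ into the inequality of Theorem \ref{thm:main}(5) yields
\[d_t(\widehat{\sigma}_0) - d_t(\widehat{\sigma}_1) \;\leq\; (a_1 - 0) - b_0(1-t) \;=\; a_1 - b_0(1-t),\]
which is part (1). For part (2), the additional hypothesis that every component of $F$ also has a boundary component in $\widehat{\sigma}_1$ allows us to apply Theorem \ref{thm:main}(6) (equivalently, to run $F$ backwards as a braid-orientable braided cobordism from $\widehat{\sigma}_1$ to $\widehat{\sigma}_0$, which has the same critical point counts by parity); combining the two one-sided inequalities gives the absolute value bound. The only subtle point is the verification that births and deaths in a braided cobordism are necessarily non-annular, which is really just a definitional unpacking, so I do not anticipate a genuine obstacle.
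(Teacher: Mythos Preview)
Your proposal is correct and matches the paper's (implicit) argument: the corollary follows directly from Theorem~\ref{thm:main}(5)--(6) once one observes, via the description of singular stills of a braided cobordism, that every even-index critical point is the birth or death of a loop encircling $\XX$ and is therefore non-annular, forcing $a_0 = 0$. One small point worth adding: the statement allows $n_1 = 0$, but Theorem~\ref{thm:main}(5) requires $L'$ non-empty; the paper covers this boundary case by observing that any braided cobordism to the empty link factors through a braided cobordism to $\widehat{\Id}_1$, and one sets $d_t(\widehat{\sigma}_1) = 0$ in that case.
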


Note that Hughes has proved in \cite{Hughes} that every link cobordism between braid closures is isotopic rel boundary to a braided cobordism.

In \cite{Rudolph}, Rudolph studies oriented ribbon-immersed surfaces in $S^3$ bounded by braids realizable as products of {\em bands}. Recall that algebraically a {\em band} is a {\em conjugate} of an elementary Artin generator, and a band presentation of a braid $\sigma$ is an explicit decomposition: \[\sigma = \prod_{j=1}^c \omega_j \sigma_{i_j}^{\pm} (\omega_j)^{-1}.\] The nomenclature is justified by Rudolph's observation that if $\sigma \in \Braid_n$ can be written as a product of $c$ bands, then the closure of $\sigma$ bounds an obvious ribbon-immersed surface in $S^3$  with one disk ($0$--handle) for each strand of the braid and one band ($1$--handle) for each term in the product. There is then an obvious push-in to $B^4$ of this surface that is {\em ribbon}--i.e., Morse with respect to the radial function with no critical points of index $2$.

Recall (see \cite{Rudolph}) that the {\em band rank} of a braid (conjugacy class) $\sigma \in \Braid_n$, denoted $\mbox{rk}_n(\sigma)$ is the smallest $c \in \Z^{\geq 0}$ for which $\sigma$ can be expressed as a product of $c$ conjugates of elementary Artin generators (either positive or negative). That is, letting $\sigma_k$ denote the $k$th elementary Artin generator, \[\mbox{rk}_n(\sigma) := \mbox{min}\left\{c \in \Z^{\geq 0} \,\,\left|\,\, \sigma = \prod_{j=1}^c \omega_j \sigma_{i_j}^\pm (\omega_j)^{-1} \mbox{ for some $\omega_j, \sigma_{i_j} \in \Braid_n$.}\right.\right\}\] 

We obtain:

\begin{corollary} \label{cor:bandrank} Let $\sigma \in \Braid_n$. Then \[\mbox{max}_{t \in [0,2]} \left|d_t(\widehat{\sigma}) - d_t(\widehat{\Id}_n)\right| \leq rk_n(\sigma).\] Noting that $d_t(\widehat{\Id}_n) = -|n(1-t)|$, this bound can be rewritten: \[\mbox{max}_{t \in [0,2]} \left|d_t(\widehat{\sigma}) + |n(1-t)|\right| \leq rk_n(\sigma).\]
\end{corollary}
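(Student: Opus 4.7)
The plan is to construct, from a minimal band presentation of $\sigma$, an oriented cobordism $F$ from $\widehat{\Id}_n$ to $\widehat{\sigma}$ with $c := \mathrm{rk}_n(\sigma)$ annular saddles and no even-index critical points, and then to invoke Theorem~\ref{thm:main}(6) (equivalently, Corollary~\ref{cor:braidedcob}(2)).

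First I would dispatch the auxiliary identity $d_t(\widehat{\Id}_n) = -|n(1-t)|$ by direct computation. Since the identity braid has no crossings, $\partial^{Lee} \equiv 0$ on $\mathcal{C}(\widehat{\Id}_n)$, so every chain is a cycle representing its own homology class. By the proposition computing the canonical Lee class of a braid closure, $\mathfrak{s}_{o_\uparrow}(\widehat{\Id}_n) = \sum_{k=0}^{n} \pm\, e^{(k)}{\bf v}_-$, and each summand $e^{(k)}{\bf v}_-$ is bihomogeneous with $(j,k)$-grading $(-n+2k,\,-n+2k)$, hence $j_t$-grading $(-n+2k)(1-t)$. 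Minimizing over $k \in \{0,\dots,n\}$ yields $-n(1-t)$ for $t \in [0,1]$ and $n(1-t)$ for $t \in [1,2]$; in both cases this is $-|n(1-t)|$.

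Next, given a band presentation $\sigma = \prod_{j=1}^{c} \omega_j \sigma_{i_j}^{\pm} \omega_j^{-1}$, Rudolph's construction produces an oriented surface built from $n$ product cylinders (one per identity-braid strand) together with $c$ band attachments, one for each conjugate generator. Arranging the bands sequentially in time realizes this as an oriented cobordism $F \subset (A\times I)\times[0,1]$ from $\widehat{\Id}_n$ to $\widehat{\sigma}$ whose regular level sets are closed braids; consequently every saddle is annular and $F$ has no birth or death critical points. The induced orientation on $F$ is compatible with the braid-like orientations $o_\uparrow$ on both ends because each band is the oriented saddle resolving $\sigma_{i_j}^{\pm 1}$.

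The remaining essential observation is that every connected component of $F$ has boundary in both $\widehat{\Id}_n$ and $\widehat{\sigma}$. Since $F$ has no even-index critical points, the number of circles in any non-empty $F$-component is at least one at every $t \in [0,1]$: each saddle either merges two circles of a component into one or splits one into two, and never reduces a positive count to zero. Hence every component meets both ends, and Theorem~\ref{thm:main}(6) applies with $a_1 = c$, $a_0 = b_0 = 0$ to yield $|d_t(\widehat{\sigma}) - d_t(\widehat{\Id}_n)| \leq c$ for all $t \in [0,2]$. Minimizing over band presentations produces the stated bound, and substituting the formula just computed for $d_t(\widehat{\Id}_n)$ gives the reformulation. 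I expect this no-disappearing-component check to be the most delicate step; were it to fail, one would fall back on the two one-sided bounds of Theorem~\ref{thm:main}(5) applied to $F$ and to its time-reverse, reaching the same conclusion by a slightly more roundabout route.
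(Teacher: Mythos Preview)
Your proof is correct and follows essentially the same route as the paper: build the obvious band cobordism between $\widehat{\sigma}$ and $\widehat{\Id}_n$ with $c = \mathrm{rk}_n(\sigma)$ annular saddles and no even-index critical points, observe that every component meets both ends, and apply the two-sided inequality from Corollary~\ref{cor:braidedcob}(2) (equivalently Theorem~\ref{thm:main}(6)). You supply more detail than the paper does---the explicit computation of $d_t(\widehat{\Id}_n)$ and the justification that no component can miss an end when there are no births or deaths---whereas the paper simply asserts both of these facts; but the argument is the same.
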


\begin{proof} If $\mbox{rk}_n(\sigma) = c$, there is an obvious braided cobordism from $\widehat{\sigma}$ to $\widehat{\Id}_n$ with $c$ odd-index critical points. This is the cobordism whose movie performs the oriented resolution at each of the elementary Artin generators anchoring each of the $c$ bands of $\sigma$. It is oriented compatibly with the braid-like orientations of $\widehat{\sigma}$ and $\widehat{\Id}_n$, and every component of the cobordism has boundary in both $\widehat{\sigma}$ and $\widehat{\Id}_n$. Appealing to Corollary \ref{cor:braidedcob} we then obtain \[\left|d_t(\widehat{\sigma}) - d_t(\widehat{\Id}_n) \right| \leq rk_n(\sigma)\] for each $t \in [0,2]$, hence \[\mbox{max}_{t \in [0,2]}\left|d_t(\widehat{\sigma}) + |n(1-t)|\right| \leq rk_n(\sigma),\] as desired. 
\end{proof}

\begin{remark} Note that when the maximum value of the distance of $d_t(\widehat{\sigma})$ occurs at $t = 0,1, 2$, then the above bound on band rank mostly follows from previously known results. In particular, we know that the absolute value of the {\em writhe} of $\sigma$ is a lower bound for $\mbox{rk}_n(\sigma)$. The fact that $d_1(\widehat{\sigma})$ agrees with the writhe of $\widehat{\sigma}$ then gives the bound at $t=1$. By work of Rasmussen \cite{Rasmussen_Slice} and Beliakova-Wehrli \cite{BW} we know that if $o$ is an orientation on a link $L$ and $\chi(L)$ is the maximal Euler characteristic among all smoothly imbedded orientable surfaces $S \subset B^4$
\begin{itemize}
	\item bounded by $L$, 
	\item oriented compatibly with the orientation $o$, and 
	\item for which every connected component of $S$ has a boundary in $L$, 
\end{itemize}
then $s(L,o) -1 \leq -\chi(L)$. Then by Rudolph's construction of a ribbon-imbedded surface of Euler characteristic $n-c$ when $\sigma \in \Braid_n$ is written as the product of $c$ bands we obtain the bound $d_0(\widehat{\sigma}) = s(\widehat{\sigma}) - 1 \leq -n+c$.
\end{remark}

We close this section by noting that in \cite[Sec. 3]{Rudolph} Rudolph proves the amazing fact that if $S$ is a ribbon-immersed orientable surface in $S^3$, then it is isotopic to a {\em banded, braided surface} of Euler characteristic $n-c$ constructed as above (from the closure of an $n$--strand braid realized as the product of $c$ bands). Rudolph's result has the following interesting corollary:

\begin{corollary} \cite{Rudolph} If $K \subset S^3$ is {\em ribbon} (i.e., it bounds a smoothly imbedded disk in $B^4$, Morse, with no maxima) then for some $n \in \N$ there exists a braid $\sigma \in \Braid_n$ with $K = \widehat{\sigma}$ and $\mbox{rk}_n(\sigma) = n-1$.  
\end{corollary}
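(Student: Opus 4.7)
The plan is to deduce this directly from Rudolph's structural theorem on ribbon-immersed surfaces (quoted just before the statement), combined with a simple observation about permutations.

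First, since $K$ is ribbon, it bounds a smoothly imbedded disk $D \subset B^4$ that is Morse for the radial function with no index-$2$ critical points. Classically, such a disk can be isotoped so that all $1$--handles are attached before any $0$--handles are cancelled; pushing this picture out to $S^3$ gives a ribbon-immersed disk in $S^3$ whose boundary is $K$. Call this disk $S$; it is an orientable ribbon-immersed surface in $S^3$ with $\chi(S) = 1$.

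Next, I would invoke Rudolph's result from \cite[Sec. 3]{Rudolph}: $S$ is isotopic in $S^3$ to a banded, braided surface built from $n$ disks ($0$--handles, one per strand) together with $c$ bands ($1$--handles), each band anchored at a conjugate of an elementary Artin generator. This produces a braid $\sigma \in \Braid_n$, together with an explicit band presentation $\sigma = \prod_{j=1}^c \omega_j \sigma_{i_j}^{\pm}(\omega_j)^{-1}$, such that $\widehat{\sigma} = \partial S = K$ and $\chi(S) = n - c$. Since $\chi(S) = 1$, we get $c = n-1$, and hence $\mbox{rk}_n(\sigma) \leq n-1$.

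For the reverse inequality, the key observation is that $K$ is connected: it is a knot, not a link. Thus the underlying permutation of $\sigma$ in the symmetric group $S_n$ is a single $n$--cycle. Now, each band $\omega_j \sigma_{i_j}^{\pm}(\omega_j)^{-1}$ projects to a transposition in $S_n$, so any band presentation of $\sigma$ expresses this $n$--cycle as a product of transpositions. Since an $n$--cycle cannot be written as a product of fewer than $n-1$ transpositions, every band presentation of $\sigma$ uses at least $n-1$ bands, giving $\mbox{rk}_n(\sigma) \geq n-1$. Combining the two inequalities yields $\mbox{rk}_n(\sigma) = n-1$, as claimed. The only nontrivial input is Rudolph's theorem itself; the rest of the argument is a clean Euler-characteristic count together with the elementary fact about cycle length in $S_n$.
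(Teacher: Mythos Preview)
Your argument is correct, and in fact the paper does not supply its own proof of this corollary at all: it is stated with the attribution \cite{Rudolph} and no proof environment follows. The surrounding discussion quotes Rudolph's theorem that any ribbon-immersed orientable surface in $S^3$ is isotopic to a banded braided surface of Euler characteristic $n-c$, and the corollary is meant to be read as an immediate consequence. Your write-up makes that deduction explicit: the upper bound $\mbox{rk}_n(\sigma) \leq n-1$ is the Euler-characteristic count from Rudolph's theorem, and your lower bound via the projection $\Braid_n \to S_n$ (a knot closure forces an $n$--cycle, and an $n$--cycle requires at least $n-1$ transpositions) is precisely the standard way to finish. There is nothing to correct.
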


Rudolph's result suggests that if one has a braid conjugacy class invariant (like ours) yielding a lower bound on band rank, along with a concrete understanding of how the invariant changes under Markov stabilization and destabilization, then one may be able to extract from it an effective ribbon obstruction.

\section{Properties of annular Rasmussen invariant and applications} \label{sec:props}

In this section, we will focus our attention on annular braid closures. In particular, we use properties of the annular Rasmussen invariants $d_t(\widehat{\sigma}, o_\uparrow)$, which we shall abbreviate to $d_t(\widehat{\sigma})$, to study braids as mapping classes. We begin with a few definitions.

Let $D_n$ denote the standard closed unit disk in $\C$ equipped with a set, $\Delta = \{p_1, \ldots, p_n\}$, of $n$ distinct marked points in $D_n \setminus \partial D_n$ which for convenience we assume to be arranged in increasing order of index from left to right on the real axis. We will denote by $\sigma_i$ the standard positive elementary Artin generator of $\Braid_n$. Subject to the identification of $\Braid_n$ with $\pi_0(\mbox{Diff}^+(D_n))$, the mapping class group of $D_n$, $\sigma_i$ is the positive (counterclockwise) half-twist about a regular neighborhood of the subarc of the real axis joining $p_i$ to $p_{i+1}$.

\begin{definition} \label{defn:admissible} We will say that $\gamma: [0,1] \rightarrow D_n$ is an {\em admissible} arc from $\gamma(0)$ to $\gamma(1)$ if it satisfies
\begin{enumerate}
	\item $\gamma$ is a smooth imbedding transverse to $\partial D_n$,
	\item $\gamma(0) \in \{-1, p_1, \ldots, p_n\}$ and $\gamma(1) \in \{-1, p_1, \ldots, p_n\} \setminus \{\gamma(0)\}$,
	\item $\gamma(t) \in D_n \setminus (\partial D_n \cup \{p_1, \ldots, p_n\})$ for all $t \in (0,1)$, and
	\item $\frac{d\gamma}{dt} \neq 0$ for all $t \in [0,1]$.
\end{enumerate}
\end{definition}

We will often abuse notation and use $\gamma$ to refer to the image of $\gamma$ in $D_n$. We also note that via its identification with the mapping class group of $D_n$, $\Braid_n$ acts on the set of isotopy classes of admissible arcs. If $\gamma$ represents an isotopy class of admissible arcs, we will denote the image of $\gamma$ under $\sigma$ by $(\gamma)\sigma$.

\begin{definition} Two admissible arcs $\gamma, \gamma'$ are said to be {\em pulled tight} if they satisfy one of:
\begin{itemize}
	\item $\gamma = \gamma'$, or
	\item $\gamma$ and $\gamma'$ intersect transversely, and if $t_1, t_2, t_1', t_2' \in [0,1]$ satisfy the property that $\gamma([t_1, t_2]) \cup  \gamma'([t_1',t_2'])$ bounds an imbedded disk $A \subset D_n$, then $A \cap \{p_1, \ldots, p_n\} \neq \emptyset$ (i.e., $\gamma$ and $\gamma'$ are transverse and form no empty bigons).
\end{itemize}
\end{definition}

Note that if $\gamma, \gamma'$ are admissible arcs, there exist admissible arcs $\delta, \delta'$ isotopic to $\gamma,\gamma'$, resp., such that $\delta, \delta'$ are pulled tight (cf. \cite[Chp. 10]{OrderingBraids}). 

\begin{definition}
Let $\gamma, \gamma'$ be two admissible arcs in $D_n$ from $\gamma(0) = \gamma'(0)$. We say $\gamma$ is {\em right} of $\gamma'$ if, when pulled tight via isotopy, either $\gamma = \gamma'$ or the orientation induced by the tangent vectors $\frac{d\gamma}{dt}\vline_{t=0}, \frac{d\gamma'}{dt}\vline_{t=0}$ agrees with the standard orientation on $D \subset \C$.
\end{definition}

\begin{definition} \cite{Rudolph_AlgFunc}  \label{defn:QP} A braid $\sigma \in \Braid_n$ is said to be {\em quasipositive} if \[\sigma = \prod_{j=1}^c w_j \sigma_{i_j} w_j^{-1}\] for some choice of braid words $w_1, \ldots, w_c$.
\end{definition}

\begin{remark} A quasipositive braid is one that is expressible as a product of positive half-twists about regular neighborhoods of {\em arbitrary} admissible arcs in $D_n$ connecting pairs of points in $\Delta$.
\end{remark}

\begin{definition} \label{defn:RV} \cite{HKM_RV, BaldGrig} A braid $\sigma$ is said to be {\em right-veering} if, for each {\em admissible} arc $\gamma$ from $-1 \in \C$ to $\Delta$, $(\gamma)\sigma$ is right of $\gamma$ when pulled tight.
\end{definition}

\begin{remark} Every quasipositive (QP) braid is right-veering (RV). This can be seen directly, but also follows immediately from \cite[Cor. 1]{Plam} and \cite[Prop. 3.10]{BaldGrig}. It is immediate from the definitions that the set of quasipositive braids forms a monoid in the braid group, as does the set of right-veering braids (cf. \cite{EtnyreVHM}). Moreover, Orevkov has proved that membership in the set of quasipositive braids is a transverse invariant \cite{OrevkovQPTrans}. On the other hand, membership in the set of right-veering braids is {\em not} a transverse invariant. Indeed, it is shown in \cite{Plam} (see also \cite{HKM_RV}) that any braid is conjugate to one with a right-veering positive stabilization.
\end{remark}

The following lemma, relating the $j_t$ gradings of canonical Lee homology classes associated to opposite orientations of the same link, will be important for our applications.

\begin{lemma} \label{lem:opporient} Let $(L,o) \subset A \times I$ be an oriented annular link. Then for all $t \in [0,2]$, \[d_t(L,o) = d_t(L,\overline{o})= \mbox{min}\{\mbox{gr}_{j_t}[{\bf x}] \,\,| \,\, [{\bf x}] \in \mbox{Span}_\F\{[\mathfrak{s}_o], [\mathfrak{s}_{\overline{o}}]\}\}.\] 
\end{lemma}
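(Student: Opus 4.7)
The plan is to produce a $(\Z \oplus \Z)$--filtered chain involution $\Psi$ of $\mathcal{C}^{Lee}(L)$ sending the canonical Lee cycle $\mathfrak{s}_o$ to $\pm \mathfrak{s}_{\overline{o}}$. Lemma \ref{lem:ZplusZtoR} would then give $\mbox{gr}_{j_t}[\mathfrak{s}_o] = \mbox{gr}_{j_t}[\mathfrak{s}_{\overline{o}}]$ for all $t$, proving $d_t(L,o) = d_t(L,\overline{o})$. For the remaining equality with the minimum over $\mbox{Span}_\F\{[\mathfrak{s}_o],[\mathfrak{s}_{\overline{o}}]\}$, I would use a standard filtered inequality: any nonzero $\eta = \alpha[\mathfrak{s}_o]+\beta[\mathfrak{s}_{\overline{o}}]$ admits the representative $\alpha x_o + \beta x_{\overline{o}}$, where $x_o,x_{\overline{o}}$ are chains realizing $d_t(L,o)$ and $d_t(L,\overline{o})$ respectively; this representative sits at filtration level at least $\min(d_t(L,o),d_t(L,\overline{o})) = d_t(L,o)$, so $\mbox{gr}_{j_t}(\eta) \geq d_t(L,o)$, with equality realized by $\eta = [\mathfrak{s}_o]$ itself.

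To build $\Psi$, I would begin with the $\F$--linear algebra automorphism $\phi : V \to V$ of the Lee Frobenius algebra $V = \F[x]/(x^2-1)$ determined by $\phi(v_+) = v_+$, $\phi(v_-) = -v_-$. Since $\phi$ swaps ${\bf a} = v_- + v_+$ and ${\bf b} = v_- - v_+$ up to an overall sign, the factor-wise extension applied to the oriented resolution $K_o$ sends $\mathfrak{s}_o$ to $\pm\mathfrak{s}_{\overline{o}}$. A direct check on $\{v_+,v_-\}$ shows that $\phi$ commutes with the Frobenius multiplication but anti-commutes with the comultiplication: $\Delta \circ \phi = -(\phi \otimes \phi)\circ \Delta$. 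To correct for this, I would define
\[\Psi(\omega) := (-1)^{s(K_\omega)}\, \phi^{\otimes n_{K_\omega}}(\omega)\]
for $\omega$ a distinguished basis element supported on a Kauffman state $K_\omega$ with $n_{K_\omega}$ circles, where $s(K)$ is the number of split edges along any path from the all-$0$ resolution to $K$ in the cube of resolutions. A short combinatorial argument shows $s(K)$ is path-independent: any two paths have both the same length and the same net change in circle count, so they contain the same number of splits. The $-1$ picked up at each split edge (from the anti-commutation of $\phi$ with $\Delta$) is exactly cancelled by the flip in $s$, while merges preserve the sign, making $\Psi$ a chain map. Since $\phi$ acts on the distinguished basis by $\pm 1$, $\Psi$ preserves the $(\Z \oplus \Z)$--filtration and satisfies $\Psi^2 = \mbox{id}$.

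The main obstacle will be the sign correction: $\phi$ is an algebra automorphism of $V$ but fails to be a Frobenius automorphism (it negates the Frobenius trace), so the uncorrected factor-wise extension $\phi^{\otimes}$ does not commute with splits; introducing the state-dependent sign $(-1)^{s(K)}$ and verifying its path-independence is the key technical step. Once the involution is in hand, everything else is formal.
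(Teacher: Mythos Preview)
Your argument is correct, and your route is genuinely different from the paper's. The paper does not construct a filtered involution swapping the two Lee generators; instead it extends Rasmussen's mod-$4$ splitting to the annular setting, producing a direct-sum decomposition $\mathcal{C}^{Lee} = \mathcal{C}^e \oplus \mathcal{C}^o$ supported on disjoint sublattices $L^e, L^o \subset \Z^2$, with $\mathfrak{s}_+ = \mathfrak{s}_o - \mathfrak{s}_{\overline{o}}$ and $\mathfrak{s}_- = \mathfrak{s}_o + \mathfrak{s}_{\overline{o}}$ living in different summands. For irrational $t$ the $j_t$-projections of $L^e$ and $L^o$ are disjoint, so any representative of $[\mathfrak{s}_o]$ splits into representatives of $\frac{1}{2}[\mathfrak{s}_\pm]$ with no grading interference, giving $\mbox{gr}_{j_t}[\mathfrak{s}_o] = \min\{\mbox{gr}_{j_t}[\mathfrak{s}_+],\mbox{gr}_{j_t}[\mathfrak{s}_-]\}$; the rational case then follows by continuity of $d_t$. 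Your involution approach is cleaner in that it works uniformly for all $t$ with no continuity patch, and the ``minimum'' clause drops out of an elementary filtered inequality once you know $d_t(L,o)=d_t(L,\overline{o})$. On the other hand, the paper's decomposition buys slightly more structure: it identifies the minimizing class as one of $[\mathfrak{s}_\pm]$ (with the other realizing the maximum), and it ties directly into the familiar Rasmussen setup used to relate $d_0$ to $s(L,o)-1$ elsewhere in the paper.
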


\begin{proof} As proved in \cite[Lem. 3.5, Cor. 3.6]{Rasmussen_Slice}, the Lee complex of a link splits into two subcomplexes according to the remainder of the $j$--grading mod $4$, and the classes $\mathfrak{s}_+ = \mathfrak{s}_o - \mathfrak{s}_{\overline{o}}$ and $\mathfrak{s}_- = \mathfrak{s}_o + \mathfrak{s}_{\overline{o}}$ lie in different subcomplexes. An argument analogous to the one Rasmussen uses to prove that $s_{min} = s([\mathfrak{s}_o]) = s([\mathfrak{s}_{\overline{o}}])$ will then tell us that for each $t \in [0,2]$, one of $\mathfrak{s}_\pm$ has {\em minimum} $j_t$ grading among all classes in $\mbox{Span}_\F\{[\mathfrak{s}_o], [\mathfrak{s}_{\overline{o}}]\}$ (the other has {\em maximum} $j_t$ grading), and $\mbox{gr}_{j_t}[\mathfrak{s}_o]$, $\mbox{gr}_{j_t}[\mathfrak{s}_{\overline{o}}]$ both agree with this minimum.

Explicitly, if $L \subseteq A \times I$ is an annular link of $\ell$ components, recall that the {\em wrapping number}, $\omega$, is defined to be the minimum geometric intersection number of $L$ with a meridional disk of $A \times I$, where this minimum is taken among all isotopy class representatives of $L$.  

We can now define two subcomplexes of the annular Khovanov-Lee complex, which coincide with those Rasmussen defines.

First, let \begin{eqnarray*}
	L^e &:=& \{(a,b) \in \Z^2 \,\,|\,\, a \equiv \ell \mod 4\,\,\, \mbox{ and } \,\,\, b \equiv \ell + 2\omega \mod 4\},\\
	L^o &:=& (2,2) + L^e.
\end{eqnarray*}

Note that $L^e \cap L^o = \emptyset$.

Now plot the annular Khovanov-Lee complex on the $\Z^2$ lattice according to $(j_0,j_2)$ gradings of its distinguished basis (see Remark \ref{rmk:lattice}) and note that since 
\begin{itemize}
	\item the $j_0$ grading of a distinguished basis element agrees mod $2$ with $\ell$ (cf. \cite{Lee}, \cite{Rasmussen_Slice}) and
	\item the $j_0 - j_2 = 2k$ grading of a distinguished basis element agrees mod $4$ with $2n$ (cf. \cite{LRoberts}),
\end{itemize}
the Khovanov-Lee complex is supported in $L^e \cup L^o$. Moreover, the fact (Lemma \ref{lem:Leediffgr}) that the Lee differential preserves both the $j_0$ and $j_2$ gradings mod $4$ tells us that the annular Khovanov-Lee complex splits as a direct sum of the two subcomplexes $\mathcal{C}^e$ and $\mathcal{C}^o$ supported on $L^e$ and $L^o$, respectively.

Indeed, by forgetting the $j_2$ grading, we see that $\mathcal{C}^e$ and $\mathcal{C}^o$ coincide with the subcomplexes Rasmussen defines in \cite[Lemma 3.5]{Rasmussen_Slice}. In particular, $\mathfrak{s}_+$ is contained in one of the two subcomplexes, and $\mathfrak{s}_-$ is contained in the other.

But now notice that the fact that $L^e$ and $L^o$ are disjoint ensures that their projections to any line of irrational slope must also be disjoint. This tells us that for any $t \not\in \mathbb{Q}$, \[\mbox{gr}_{j_t}[\mathfrak{s}_+] \neq \mbox{gr}_{j_t}[\mathfrak{s}_-].\] This in turn implies that for any $t \not\in \mathbb{Q}$, \[\mbox{gr}_{j_t}[\mathfrak{s}_o] = \mbox{gr}_{j_t}[\mathfrak{s}_{\overline{o}}] = \mbox{min}\{\mbox{gr}_{j_t}[\mathfrak{s}_+], \mbox{gr}_{j_t}[\mathfrak{s}_-]\}.\]

By continuity of $d_t$ with respect to $t$, the result follows for all rational $t$ as well. That is, for all $t \in [0,2]$, \[d_t(L,o) = d_t(L,\overline{o}) = \mbox{min}\{\mbox{gr}_{j_t}[{\bf x}] \,\,|\,\, [{\bf x}] \in \mbox{Span}_\F\{[\mathfrak{s}_o],[\mathfrak{s}_{\overline{o}}]\},\] as desired.

\end{proof}

\begin{lemma} \label{lem:sliceBenn}
Let $\sigma \in \Braid_n$ have writhe $w$. Then \[-n|1-t|+w \leq d_t(\widehat{\sigma})\] for all $t \in [0,2]$.
\end{lemma}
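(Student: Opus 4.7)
The plan is to produce an explicit cycle representative of the class $[\mathfrak{s}_{o_\uparrow}]$ whose $j_t$-grading is exactly $-n|1-t|+w$. Since $d_t(\widehat{\sigma})$ is the maximum of $j_t$ over all cycles in $[\mathfrak{s}_{o_\uparrow}]$, this immediately gives the desired lower bound. The natural candidate is the canonical Lee cycle itself, evaluated using the formula from the proposition in Subsection \ref{subsec:annbraid}, namely $\mathfrak{s}_{o_\uparrow}(\widehat{\sigma}) = \sum_{\ell=0}^{n} \pm\, e^{(\ell)}{\bf v}_-$.

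First I would record the bigradings of the relevant basis elements. The class ${\bf v}_-$ labels each of the $n$ nontrivial circles in the all-braidlike resolution of $\widehat{\sigma}$ by $v_-$; this resolution contains no trivial circles and sits at homological degree $0$ after the $n_-$-shift. Khovanov's conventions then give $j({\bf v}_-) = -n + w$, while the annular grading gives $k({\bf v}_-) = -n$. Since the raising operator $e$ of $\sltwo$ acts with $(j,k)$-bidegree $(+2,+2)$ on both $V$ (as $e(v_-)=v_+$) and $V^*$ (as $e(\bar{v}_-) = -\bar{v}_+$), each $e^{(\ell)}{\bf v}_-$ is a nonzero homogeneous element of bigrading $(-n+w+2\ell,\,-n+2\ell)$ for $\ell = 0,1,\ldots,n$, and these bigradings are pairwise distinct.

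A direct computation then yields
\begin{equation*}
j_t\!\left(e^{(\ell)}{\bf v}_-\right) \;=\; (-n+w+2\ell) - t(-n+2\ell) \;=\; w + (1-t)(2\ell - n).
\end{equation*}
Because the terms $e^{(\ell)}{\bf v}_-$ lie at pairwise distinct lattice points, they extend to a filtered graded basis and the $j_t$-grading of the cycle $\mathfrak{s}_{o_\uparrow}$ equals the minimum of these $n+1$ values. For $t \in [0,1]$ the factor $(1-t)$ is non-negative and the minimum is achieved at $\ell = 0$, giving $w - n(1-t)$; for $t \in [1,2]$ the factor is non-positive and the minimum is achieved at $\ell = n$, giving $w - n(t-1)$. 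In both cases the minimum equals $w - n|1-t|$, so
\begin{equation*}
d_t(\widehat{\sigma}) \;\geq\; j_t\!\left(\mathfrak{s}_{o_\uparrow}\right) \;=\; -n|1-t| + w,
\end{equation*}
as claimed.

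The only genuine content is the bookkeeping: verifying the $(j,k)$-bigrading of ${\bf v}_-$ from Khovanov's shift conventions together with the assertion that the oriented resolution has $|r| = n_-$, and confirming that the divided powers $e^{(\ell)}$ act with bidegree $(+2\ell, +2\ell)$ (with the sign subtleties in $V^*$ irrelevant to the grading). I do not expect a serious obstacle beyond these routine verifications.
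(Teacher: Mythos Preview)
Your proof is correct and follows the same core idea as the paper: compute $\mbox{gr}_{j_t}(\mathfrak{s}_{o_\uparrow})$ directly and use it as a lower bound for $d_t(\widehat{\sigma})$. The only difference is that the paper's argument is terser (it asserts $\mbox{gr}_{j_t}(\mathfrak{s}_{o_\uparrow}) = (-n+w) - t(-n)$ for $t\in[0,1]$ without writing out the $\sltwo$ decomposition) and handles $t\in[1,2]$ by invoking the symmetry $d_{1-t}=d_{1+t}$ from Theorem~\ref{thm:main}(2), whereas you compute the minimum over $\ell$ directly for the full interval; your version is slightly more self-contained, but the content is the same.
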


\begin{proof}
When $t \in [0,1]$, we calculate $\mbox{gr}_{j_t}(\mathfrak{s}_{o_\uparrow}(\widehat{\sigma})) = (-n+w) - t(-n)$, so \[\mbox{gr}_{j_t}(\mathfrak{s}_{o_\uparrow}(\widehat{\sigma})) = -n(1-t) + w \leq \mbox{gr}_{j_t}([\mathfrak{s}_{o_\uparrow}(\widehat{\sigma})]) = d_t(\widehat{\sigma}),\] as desired. The extension of the bound to $t \in [1,2]$ follows from Theorem \ref{thm:main}, part (2).
\end{proof}

\begin{remark} Lemma \ref{lem:sliceBenn} recovers the Plamenevskaya-Shumakovitch ``$s$--Bennequin inequality" (\cite[Prop. 4]{Plam}, \cite[Lemma 1.C.]{Shum}) when $t=0$. That is:
\[\mbox{sl}(\widehat{\sigma}) \leq s(\widehat{\sigma}) - 1\] where $\mbox{sl}(\widehat{\sigma}) = -n+w$ is the self-linking number of the transverse link represented by $\widehat{\sigma}$. See Section \ref{sec:transverse}. 
\end{remark}

The ``$s$--Bennequin bound" is sharp for quasipositive braid closures (implicit in \cite[Rmk. 2]{Plam}, explicit in \cite[Prop. 1.F.]{Shum}). Indeed, this argument can be extended to show:

\begin{theorem} \label{thm:QP}
If $\sigma$ is a quasipositive braid of index $n$ and writhe $w \geq 0$, we have \[d_t(\widehat{\sigma}) = -n|1-t| + w\] for all $t \in [0,2]$.
\end{theorem}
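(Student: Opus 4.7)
The plan is to combine Lemma~\ref{lem:sliceBenn}, which already supplies the lower bound $-n|1-t|+w \leq d_t(\widehat{\sigma})$, with a matching upper bound obtained from an explicit cobordism built directly from the quasipositive factorization of $\sigma$.

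Concretely, fix a factorization $\sigma = \prod_{j=1}^{c} w_j \sigma_{i_j} w_j^{-1}$. Because conjugation preserves writhe and each $\sigma_{i_j}$ contributes $+1$, one has $c = w$. Now perform the oriented resolution at each of the $c$ anchoring positive crossings: each resolved factor $w_j\,\Id\,w_j^{-1}$ cancels, so the associated movie terminates at the identity braid closure $\widehat{\Id}_n$. The resulting braided cobordism $F$ from $\widehat{\sigma}$ to $\widehat{\Id}_n$ has exactly $w$ odd-index (annular) critical points and no even-index critical points. It is braid-orientable, since the oriented smoothing of a positive crossing is compatible with the braid-like orientation on each intermediate level set. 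Crucially, the absence of births and deaths forces every connected component of $F$ to meet both boundary ends, so the connectivity hypothesis of Corollary~\ref{cor:braidedcob}(2) is automatically satisfied.

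Applying Corollary~\ref{cor:braidedcob} with $a_1 = w$ and $b_0 = 0$ now yields
\[
d_t(\widehat{\sigma}) - d_t(\widehat{\Id}_n) \leq w \quad \text{for all } t \in [0,2].
\]
Using the identity $d_t(\widehat{\Id}_n) = -|n(1-t)| = -n|1-t|$ recorded in Corollary~\ref{cor:bandrank}, this rearranges to $d_t(\widehat{\sigma}) \leq -n|1-t| + w$, which matches the lower bound from Lemma~\ref{lem:sliceBenn} and gives the claimed equality.

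The proof is essentially a one-step cobordism argument, so there is no genuine obstacle; the only items requiring checking are that the partial resolution really terminates at $\widehat{\Id}_n$ (immediate from the algebraic form of the factorization) and that the cobordism is braid-orientable (clear from the local picture of the oriented smoothing of a positive crossing). The real work has already been invested in Theorem~\ref{thm:main} and Corollary~\ref{cor:braidedcob}; the quasipositive hypothesis is exactly what is needed to produce a cobordism whose saddle count equals the writhe.
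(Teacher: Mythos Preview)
Your proof is correct and follows essentially the same approach as the paper: both combine the lower bound from Lemma~\ref{lem:sliceBenn} with an upper bound obtained by applying the cobordism inequality to the $w$--saddle annular cobordism from $\widehat{\sigma}$ to $\widehat{\Id}_n$ built from the quasipositive factorization. The only cosmetic differences are that the paper invokes Theorem~\ref{thm:main}(5) directly rather than its braided specialization Corollary~\ref{cor:braidedcob}, and computes $d_t(\widehat{\Id}_n)=-n|1-t|$ on the spot rather than citing it from Corollary~\ref{cor:bandrank}.
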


\begin{proof} Lemma \ref{lem:sliceBenn} gives us the lower bound \[-n|1-t| + w \leq d_t(\widehat{\sigma}).\] To obtain the upper bound, note that if $\widehat{\sigma}$ is quasipositive, there is an oriented (annular) cobordism $F$ from $\widehat{\sigma}$ to $\widehat{\Id}_n$ obtained by performing an orientable saddle cobordism near each quasipositive generator of $\sigma$ as in \cite[Fig. 7]{Plam}, and each component of this cobordism has a boundary on $\widehat{\sigma}$. An easy computation using the crossingless diagram for $\widehat{\Id}_n$ tells us that $d_t(\widehat{\Id}_n) = -n|1-t|$, so part (5) of Theorem \ref{thm:main} tells us: \[d_t(\widehat{\sigma}) \leq -n|1-t| + w,\] as desired. 
\end{proof}

Theorem \ref{thm:QP} above provides an obstruction to a braid conjugacy class being quasipositive. Unfortunately, Theorem \ref{thm:d1=w} tells us that this obstruction is no more sensitive than the one coming from the sharpness of the $s$--Bennequin bound.

\begin{theorem} \label{thm:d1=w} Let $\sigma \in \Braid_n$ have writhe $w$. Then $d_1(\widehat{\sigma}) = w$.
\end{theorem}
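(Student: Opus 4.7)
My plan is to prove the two inequalities $d_1(\widehat\sigma) \geq w$ and $d_1(\widehat\sigma) \leq w$ separately. The lower bound comes from a direct chain-level computation, and the upper bound from an argument by contradiction that exploits the $\sltwo$-module structure on the annular Khovanov-Lee complex described in Subsection~\ref{subsec:annbraid} and \cite{SchurWeyl}.

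First I would establish $d_1(\widehat\sigma) \geq w$ (which is also immediate from Lemma~\ref{lem:sliceBenn}) by exhibiting $\mathfrak{s}_{o_\uparrow}$ as a cycle supported on the line $j_1 = w$. From the formula $\mathfrak{s}_{o_\uparrow} = \sum_{k=0}^n (\pm 1)^k e^{(k)} {\bf v}_-$ of Subsection~\ref{subsec:annbraid}, the bigrading $(j,k) = (w - n, -n)$ of ${\bf v}_-$, and the bidegree $(2,2)$ of $e$, every summand $e^{(k)} {\bf v}_-$ has bigrading $(w - n + 2k, -n + 2k)$ and hence $j_1 = j - k = w$. So $\mathfrak{s}_{o_\uparrow}$ is supported entirely at $j_1 = w$, giving $d_1(\widehat\sigma) \geq \mbox{gr}_{j_1}(\mathfrak{s}_{o_\uparrow}) = w$.

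For the upper bound, I would use two structural facts from the $\sltwo$-framework: (i) $\mathcal{C}$ contains a unique highest-weight-$n$ irreducible $\sltwo$-subrepresentation $V_n$, generated by Plamenevskaya's lowest-weight class ${\bf v}_-$, and it sits entirely at $j_1 = w$ (since $e, f, h$ all preserve $j_1 = j - k$); and (ii) each summand of $\partial^{Lee} = \partial_0 + \partial_- + \Phi_+ + \Phi_0$ preserves the $\sltwo$-isotypic decomposition of $\mathcal{C}$, so the $V_n$-isotypic projection $p: \mathcal{C} \to V_n$ commutes with $\partial^{Lee}$. Suppose for contradiction that some cycle $\eta \sim \mathfrak{s}_{o_\uparrow}$ in Lee homology has $\mbox{gr}_{j_1}(\eta) \geq w + 1$, and write $\mathfrak{s}_{o_\uparrow} = \eta + \partial^{Lee}(\xi)$. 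Applying $p$: since $V_n$ lies at $j_1 = w$ while $\eta$ lies at $j_1 \geq w + 1$, $p(\eta) = 0$, so $\mathfrak{s}_{o_\uparrow} = p(\mathfrak{s}_{o_\uparrow}) = \partial^{Lee}(p(\xi))$ with $p(\xi) \in V_n$. But each summand of $\partial^{Lee}$ vanishes on $V_n$: the three weight-shifted summands $\partial_-, \Phi_+, \Phi_0$ have strictly positive $j_1$-degree and so would send $V_n$ into $V_n$-isotypic at $j_1 > w$, which is empty; and $\partial_0|_{V_n}$ is an $\sltwo$-equivariant endomorphism of the irreducible $V_n$, hence a scalar by Schur, forced to vanish by $\partial_0^2 = 0$. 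Therefore $\mathfrak{s}_{o_\uparrow} = 0$, contradicting that $\mathfrak{s}_{o_\uparrow}$ is a nonzero vector of $V_n$. Hence $d_1(\widehat\sigma) \leq w$.

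The crucial and delicate step is establishing property~(ii), that each summand of $\partial^{Lee}$ preserves the $\sltwo$-isotypic decomposition of $\mathcal{C}$. For $\partial_0$ this is the $\sltwo$-equivariance of the annular Khovanov differential established in \cite{SchurWeyl}. For the weight-$(\mp 2)$ summands $\partial_-$ and $\Phi_+$, one identifies them, up to scalars, with the $\sltwo$-generators $f$ and $e$ acting on $\mathcal{C}$ via its tensor-product decomposition into copies of $V$, $V^*$, and $W$; and the trivial-circle summand $\Phi_0$ commutes with the $\sltwo$-action because the latter is trivial on $W$-factors. Verifying these identifications from the merge/split formulas defining $\partial^{Lee}$ against the action in \cite[Sec.~4]{SchurWeyl} is the principal technical obstacle of the proof.
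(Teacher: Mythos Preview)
Your lower bound is fine, but the upper bound argument has a genuine gap: claim~(ii) is false, and your proposed justification of it cannot work.

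The operators $\partial_-$ and $\Phi_+$ cannot be identified, even up to scalars, with $f$ and $e$. The $\sltwo$ generators $e,f$ act within each Kauffman state and hence preserve the homological $i$--grading, whereas $\partial_-$ and $\Phi_+$ are components of the Lee differential and raise $i$ by~$1$. More to the point, $\partial_-$ does not preserve the $\sltwo$--isotypic decomposition. Already for $\sigma=\sigma_1\in\Braid_2$: the braid-like ($0$--) resolution consists of two nested nontrivial circles, carrying $V\otimes V^*\cong V_2\oplus V_0$, while the $1$--resolution is a single trivial circle, carrying $W$ (two copies of the trivial representation). The highest-weight vector $v_+\otimes\bar v_+\in V_2$ has $k=2$, and the merge map sends it to $w_+$ in the trivial circle, which has $k=0$. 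Thus $\partial_-(v_+\otimes\bar v_+)=w_+$ lies entirely in the $V_0$--isotypic component, so $\partial_-$ moves the $V_2$--isotypic piece into the $V_0$--isotypic piece. Consequently the isotypic projection $p:\mathcal{C}\to V_n$ is not a chain map, and your equation $p(\partial^{Lee}\xi)=\partial^{Lee}(p(\xi))$ is unjustified.

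The paper sidesteps this by working instead in Lee's ${\bf a}/{\bf b}$ basis, where the multiplicativity of the Lee merge/split maps makes the projection onto $\mbox{Span}\{\mathfrak{s}_{o_\uparrow}\}$ (their composite $q\circ p$) a genuine chain map; the auxiliary projection onto the full braid-like resolution is used only as a $j_1$--grading--preserving linear map, not as a chain map. If you want to rescue an $\sltwo$--flavored approach, you would need to replace the isotypic projection by one that is compatible with the Lee differential, and it is not clear that the representation theory alone provides this.
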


\begin{proof}
Let $\mathcal{D}$ be a diagram of an annular braid closure $\widehat{\sigma} \subseteq A \times I$ and let $\mathcal{C}$ denote the graded vector space underlying the Lee complex. Following \cite{Lee} and \cite{Rasmussen_Slice}, we note that the vector space, $\mathcal{C}$, is generated by resolutions of $\mathcal{D}$ whose circles are labeled by ${\bf a}$ or ${\bf b}$, where ${\bf a} = v_- + v_+$ and ${\bf b} = v_- - v_+$. Indeed the set of ${\bf a}/{\bf b}$ markings of resolutions of $\mathcal{D}$ forms a basis for $\mathcal{C}$.\footnote{This is {\em not} a filtered graded basis for the $(\Z \oplus \Z)$--filtered complex $(\mathcal{C},\partial^{Lee})$, but it is a basis.} For the purposes of this proof, let us denote the set of these generators by $S$. We will consider a partition of $S$ into three subsets:

\begin{itemize}   
	\item $S_1 = \{\mathfrak{s}_{o_\uparrow}\}$
	\item $S_2 = \{{\bf x} \in (S \setminus S_1) \,\,|\,\, {\bf x} \mbox{ is a labeling of the braid-like resolution of } \mathcal{D}\}$
	\item $S_3 = S \setminus (S_1 \cup S_2)$
\end{itemize}

Recall that the {\em braid-like resolution} of $\mathcal{D}$ is the oriented resolution for the braid-like orientation, $o_\uparrow$. Corresponding to this partition of S, there is a direct sum decomposition of $\mathcal{C}$ into subspaces: $\mathcal{C} = V_1 \oplus V_2 \oplus V_3$ with $V_i = \mbox{Span}(S_i)$.

Let \begin{eqnarray*}
	p &:& \mathcal{C} \rightarrow V_1 \oplus V_2\\
   	q &:& V_1 \oplus V_2 \rightarrow V_1
\end{eqnarray*} denote the projection maps. Note that $p$ and $q$ satisfy the following properties.
\begin{enumerate}
	\item With respect to the $j_1$--grading on $\mathcal{C}$, $p$ is a grading-preserving map of graded vector spaces. 
	\item $q \circ p$ is a chain map.
	\item $(q\circ p)(\mathfrak{s}_{o_\uparrow}) = \mathfrak{s}_{o_\uparrow}$.
\end{enumerate}

We can now prove the following claims.

\begin{claim} \label{claim:projnonzero} If $z \in \mathcal{C}$ is a cycle satisfying $[z] = [\mathfrak{s}_{o_\uparrow}]$, then $p(z) \neq 0 \in \mathcal{C}$.
\end{claim}

\begin{proof} 
Let $z$ be a cycle representing $[\mathfrak{s}_{o_\uparrow}]$. Then we can write \[z  =  \mathfrak{s}_{o_\uparrow} + \partial^{Lee}(x)\] for some $x \in \mathcal{C}$, and hence \[(q\circ p)(z)  =  (q\circ p)(\mathfrak{s}_{o_\uparrow}) + (q\circ p)(\partial^{Lee} x)  =  \mathfrak{s}_{o_\uparrow} + \partial^{Lee}(q\circ p)(x)\] by properties 2 and 3. Thus $[(q\circ p)(z)]  =  [\mathfrak{s}_{o_\uparrow}]$, and since $[\mathfrak{s}_{o_\uparrow}]$ is nonzero, this shows that $(q \circ p)(z)$ and hence $p(z)$ is nonzero.
\end{proof}

\begin{claim} $d_1(\widehat{\sigma}) \leq w$.
\end{claim}

\begin{proof} Let $z$ be a representative of $[\mathfrak{s}_{o_\uparrow}]$ for which $\mbox{gr}_{j_1}(z) = \mbox{gr}_{j_1}[\mathfrak{s}_{\uparrow}]$. Then $p(z)$ is nonzero by Claim \ref{claim:projnonzero}, and since all elements in $V_1 \oplus V_2$ have $\mbox{gr}(j_1) = w$, we have $\mbox{gr}_{j_1}(p(z)) = w$. Since $p$ is graded as a map of $j_1$--graded vector spaces, this implies\footnote{From the point of view of the ``racing team" analogy from the end of Section \ref{sec:algprelim}, this implication is clear: If the speed of a team is equal to the speed of its slowest member, then you will never reduce the speed of the team by removing a member. (Unless you remove the last member of the team.)} \[d_1(\widehat{\sigma})  =  \mbox{gr}_{j_1}(z) \leq \mbox{gr}_{j_1}(p(z))  =  w.\]  
\end{proof}

On the other hand, \[d_1(\widehat{\sigma}) = \mbox{gr}_{j_1}[s_{o_\uparrow}] \geq \mbox{gr}_{j_1}(s_{o_\uparrow}) = w,\] so \[d_1(\widehat{\sigma}) =  w,\] as desired.

\end{proof}

\begin{corollary} \label{cor:sminus1=sl} If $\sigma \in \Braid_n$ has writhe $w$, then \[d_t(\widehat{\sigma})= -n|1-t| + w \,\,\mbox{ iff } \,\, \mbox{sl}(\widehat{\sigma}) = s(\widehat{\sigma}) - 1.\]
\end{corollary}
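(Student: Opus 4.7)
The statement is a biconditional in which one direction is immediate: evaluating the hypothesized formula $d_t(\widehat{\sigma}) = -n|1-t|+w$ at $t=0$ gives $d_0(\widehat{\sigma}) = -n+w = \mathrm{sl}(\widehat{\sigma})$, while Theorem \ref{thm:main}(3) gives $d_0(\widehat{\sigma}) = s(\widehat{\sigma})-1$, so $s(\widehat{\sigma})-1 = \mathrm{sl}(\widehat{\sigma})$.

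For the converse, assume $s(\widehat{\sigma})-1 = \mathrm{sl}(\widehat{\sigma}) = -n+w$. Then Theorem \ref{thm:main}(3) gives $d_0(\widehat{\sigma}) = d_2(\widehat{\sigma}) = -n+w$, and Theorem \ref{thm:d1=w} gives $d_1(\widehat{\sigma}) = w$. The plan is to combine these three point values with the slope bound $|m_t(\widehat{\sigma})| \leq \omega \leq n$ from Theorem \ref{thm:main}(4) (where $\omega$ denotes the wrapping number of $\widehat{\sigma}$) and to argue that together they force the full equality $d_t(\widehat{\sigma}) = -n|1-t|+w$ on all of $[0,2]$.

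The argument on $[0,1]$ is a direct comparison of average slope with maximal slope: the piecewise-linear function $d_t$ must increase by $n$ over an interval of length $1$, while each linear piece has slope at most $n$. The only way to achieve this is for every linear piece to have slope exactly $n$, so $d_t = -n+w+nt = -n(1-t)+w$ throughout $[0,1]$. Symmetrically, on $[1,2]$ the function decreases by $n$ with slopes bounded below by $-n$, forcing slope $-n$ throughout and hence $d_t = -n(t-1)+w$. Concatenating the two halves gives $d_t(\widehat{\sigma}) = -n|1-t|+w$, as desired.

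No substantive obstacle arises: the corollary is essentially a repackaging of Theorem \ref{thm:main} and Theorem \ref{thm:d1=w}. The point worth emphasizing is that the slope bound from Theorem \ref{thm:main}(4) is tight enough to promote three point equalities into the full linear formula on each half-interval; as a by-product, this also forces $\omega = n$ under the hypothesis.
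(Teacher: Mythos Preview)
Your proof is correct and follows essentially the same approach as the paper: both use Theorem~\ref{thm:main}(3) for the forward direction, and for the converse combine $d_0 = -n+w$, $d_1 = w$ (Theorem~\ref{thm:d1=w}), and the slope bound from Theorem~\ref{thm:main}(4) to force the tent shape on $[0,1]$. The only cosmetic difference is that the paper invokes the symmetry of Theorem~\ref{thm:main}(2) to pass from $[0,1]$ to $[1,2]$, whereas you repeat the slope-comparison argument directly on $[1,2]$ using the lower slope bound $-n$; both are equally valid.
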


\begin{proof} Recalling that $\mbox{sl}(\widehat{\sigma}) = -n + w$, the forward implication follows from setting $t=0$ and applying part (3) of Theorem \ref{thm:main}. The reverse implication follows from Theorem \ref{thm:d1=w} and part (4) of Theorem \ref{thm:main}. In particular, $d_t(\widehat{\sigma})$ is piecewise linear, and $m_t(\widehat{\sigma}) \leq n$ for all $t \in [0,2]$. But if $d_0(\widehat{\sigma}) = -n+w,$ and $d_1(\widehat{\sigma}) = w$, then $m_t(\widehat{\sigma}) = n$ for all $t \in [0,1]$. Part (2) of Theorem \ref{thm:main} completes the argument.
\end{proof}

We also have a {\em sufficient} condition for a braid conjugacy class to be right-veering, given by the following.

\begin{theorem} \label{thm:RV} Let $\sigma \in \Braid_n$. If $m_{t_0}(\widehat{\sigma}) = n$ for some $t_0 \in [0,1)$, then $\sigma$ is right-veering.
\end{theorem}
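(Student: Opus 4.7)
The plan is to use the slope condition $m_{t_0}(\widehat\sigma) = n$ to force the non-vanishing of Plamenevskaya's transverse invariant $[\tilde{\psi}]$ in Khovanov homology, and then invoke \cite[Prop.~3.10]{BaldGrig} (cited in the remark following Definition~\ref{defn:RV}), which asserts that this non-vanishing implies right-veeringness.

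First, Remark~\ref{rmk:kgrading} translates the hypothesis: having $m_{t_0}(\widehat\sigma) = n$ for some $t_0 \in [0,1)$ produces a Lee-cycle $z$ representing $[\mathfrak{s}_{o_\uparrow}]$ whose support $S$ attains the max-min defining $\mbox{gr}_{j_{t_0}}[\mathfrak{s}_{o_\uparrow}]$, with a lattice point $(a,b) \in S$ of $k$-grading $-n$ attaining this minimum $j_{t_0}$-grading. Since $\widehat\sigma$ has wrapping number $n$, Subsection~\ref{subsec:annbraid} identifies the unique distinguished basis element at $k = -n$ with Plamenevskaya's class ${\bf v}_- = \tilde{\psi}(\widehat\sigma)$ at lattice coordinates $(w-n, w+n)$, forcing the $k = -n$ component of $z$ to be a nonzero multiple $c\,{\bf v}_-$.

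Second, suppose for contradiction that $[\tilde{\psi}] = 0$ in Khovanov homology, so ${\bf v}_- = \partial y$ for some $y$ with $j(y) = w - n$. The Lee-cohomologous representative $z' := z - c(\partial + \Phi)y$ of $[\mathfrak{s}_{o_\uparrow}]$ then has vanishing $k = -n$ component. By Lemma~\ref{lem:Leediffgr}, $\Phi$'s summands $\Phi_0$ and $\Phi_+$ shift $j_{t_0}$ by $4$ and $4 - 2t_0$ respectively, both strictly positive on $[0,1)$. A careful accounting of the $k$-components of $y$ shows that every lattice point in the support of $-c\Phi(y)$ lies strictly above $(w-n, w+n)$ in $j_{t_0}$-grading, yielding a representative of $[\mathfrak{s}_{o_\uparrow}]$ with strictly higher min-$j_{t_0}$ than $z$---a contradiction. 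Applying \cite[Prop.~3.10]{BaldGrig} then gives right-veeringness.

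The main obstacle is the $j_{t_0}$-bookkeeping in the second step: $y$ may have components at $k$-gradings well above $-n+2$, where $\Phi_+$'s $j_{t_0}$-shift of $4 - 2(m+1)t_0$ fails to be positive for $t_0$ near $1$ and large $m$. Handling this requires a careful reduction to a $y$ supported close to the bottom of the $k$-grading---essentially a first-page witness $y = y_{-n+2}$ satisfying $\partial_0(y_{-n+2}) = 0$ and $\partial_-(y_{-n+2}) = {\bf v}_-$---via the $k$-filtration spectral sequence from annular Khovanov to Khovanov homology, together with an iterative argument excluding the need for higher-page witnesses. The threshold $t_0 = 1$ is sharp precisely because this is where the $\Phi_+$-shift on $k = -n+2$ components degenerates.
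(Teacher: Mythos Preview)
Your approach has a genuine gap, and the obstacle you flag at the end is exactly where it lies. You are attempting to prove the implication $m_{t_0}(\widehat\sigma)=n \Rightarrow [\tilde\psi]\neq 0$, and then conclude via \cite{BaldGrig}. But this intermediate implication is precisely the reverse inclusion $\mathcal{M}'_{t_0}\subseteq\Psi$ asked about in Question~\ref{question:PsiMax}; the paper leaves it open. Your proposed fix---reducing a Khovanov primitive $y$ with $\partial y={\bf v}_-$ to a ``first-page witness'' $y_{-n+2}$ with $\partial_0 y_{-n+2}=0$ and $\partial_- y_{-n+2}={\bf v}_-$---is exactly the statement $\kappa(\widehat\sigma)=2$. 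The hypothesis $[\tilde\psi]=0$ only gives $\kappa<\infty$, and from a general $y$ there is no mechanism to produce such a witness: decomposing $y=\sum_m y_m$ by $k$--grading yields $\partial_- y_{-n+2}={\bf v}_-$, but $\partial_0 y_{-n+2}$ need not vanish (it is cancelled by $\partial_- y_{-n+4}$, etc.), and if you use $y_{-n+2}$ alone then $\partial_0 y_{-n+2}$ sits at $j_{t_0}$--grading $(w-n)+(n-2)t_0 < j_{t_0}({\bf v}_-)$, making things worse rather than better.

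The paper's proof avoids this by arguing the contrapositive through Hubbard--Saltz \cite[Cor.~16]{HS}: if $\sigma$ is \emph{not} right-veering then $\kappa(\widehat\sigma)=2$, which hands you the first-page witness $\theta$ (supported at $k=-n+2$, with $\partial_0\theta=0$ and $\partial_-\theta={\bf v}_-$) directly. With that $\theta$ in hand, your Step~2 computation goes through cleanly: replacing $\xi$ by $\xi-c(\partial+\Phi)\theta$ removes the ${\bf v}_-$ term and adds only $\Phi_0\theta$ and $\Phi_+\theta$, both of which have strictly larger $j_{t_0}$--grading than ${\bf v}_-$ for $t_0\in[0,1)$ since $\theta$ sits at $k=-n+2$. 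The crucial point is that the needed $k$--grading control on the primitive comes from the right-veering hypothesis via $\kappa$, not from the (weaker) vanishing of Plamenevskaya's class.
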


\begin{proof} We will prove the contrapositive: that if $\sigma$ is not right-veering, then $m_{t_0}(\widehat{\sigma}) < n$ for all $t_0 \in [0,1)$.

If $\sigma$ is not right-veering, then \cite[Cor. 16]{HS} tells us that $\kappa(\widehat{\sigma}) = 2$. Recall from \cite[Defn. 1]{HS} that 
\[\kappa(\widehat{\sigma}) := n + \min\{c \,\,|\,\, \psi(\widehat{\sigma}) = 0 \in H_*(\cF_c)\},\] where here $\psi(\widehat{\sigma}) \in \mbox{Kh}(\widehat{\sigma})$ is Plamenevskaya's invariant, and $\cF_c$ is the subcomplex of the {\em Khovanov} complex generated by distinguished basis elements whose $k$--grading is at most $c$. That is, letting $\mathcal{C}^{Kh}$ denote the graded vector space underlying the Khovanov complex associated to (an annular diagram for) $\widehat{\sigma} \subseteq A \times I$:
\[\cF_c := \mbox{Span}_\F\{{\bf x} \in \mathcal{C}^{Kh} \,\,|\,\, \mbox{gr}_{k}({\bf x})  \leq c\}.\] 

Now consider the {\em Lee} complex $\mathcal{C}^{Lee}(\widehat{\sigma})$, plotted on the $(j_0,j_2)$ lattice. It is helpful to note that in the plane spanned by this lattice:
\begin{itemize}
	\item vertical lines have constant $j$ grading,
	\item slope $1$ lines (i.e., those along which $\mbox{gr}_{j_0} - \mbox{gr}_{j_2}$ is constant) have constant $k$ grading, and
	\item slope $-1$ lines (i.e., those along which $\mbox{gr}_{j_0} + \mbox{gr}_{j_2}$ is constant) have constant $j_1$ grading.
\end{itemize}	

Plamenevskaya's cycle ${\bf v}_-$ has $j$ grading $-n+w$ and $k$ grading $-n$, hence is supported in lattice point $(-n+w, n+w)$. Moreover, ${\bf v}_-$ is the {\em unique} distinguished basis element whose $k$ grading is $-n$, so the lattice point containing ${\bf v}_-$ is the {\em only} lattice point $(a,b) \in \Z^2$ containing a distinguished basis element and satisfying $\mbox{gr}_k(a,b) = -\frac{a-b}{2} = n$.

Now suppose (aiming for a contradiction) that $m_{t_0} = n$ for some $t_0 \in [0,1)$. Then by the above observation there exists some cycle $\xi \in \mathcal{C}^{Lee}$ satisfying:
\begin{itemize}
	\item $[\xi] = [\mathfrak{s}_{o_\uparrow}] \in H_*(\mathcal{C}^{Lee})$,
	\item $\xi = c{\bf v}_- + \xi'$ for some $c \neq 0$, with $\xi'$ supported in $\Z^2 \setminus \{(-n+w,n+w)\}$, and
	\item $\mbox{gr}_{j_{t_0}}(\xi) = \mbox{gr}_{j_{t_0}}([\mathfrak{s}_{o_\uparrow}])$.
\end{itemize}

But we also know that $\kappa(\widehat{\sigma}) = 2$; i.e., there exists a chain $\theta$ in $\mathcal{C}^{Kh} = \mathcal{C}^{Lee}$, the graded vector space underlying the annular Lee complex, satisfying:
\begin{itemize}
	\item $(\partial_0 + \partial_-)(\theta) = {\bf v}_-$ and
	\item $\mbox{gr}_k(\theta) \leq -n+2$
\end{itemize}

Moreover, since 
\begin{itemize}
	\item $\mbox{deg}_{k}(\partial_0) = 0$, 
	\item $\mbox{deg}_{(j,k)}(\partial_-) = (0,-2)$, and 
	\item $\mathcal{F}_{-n}$ is $1$--dimensional, generated by ${\bf v}_-$, 
\end{itemize}
we know that $\theta$ is supported in $(j_0,j_2)$ lattice point $(-n+w,n+w - 4)$, and $\partial_0(\theta) = 0$. 

Recall (see the proof of part (4) of Theorem \ref{thm:main}) that $m_{t_0}(\widehat{\sigma}) = n$ implies that \[\min_{(a,b) \in \mbox{supp}(\xi)}\{\mbox{gr}_{j_{t_0}}(a,b)\} = \mbox{gr}_{j_{t_0}}({\bf v}_-),\] and we calculate that \[\mbox{gr}_{j_{t_0}}({\bf v}_-) = (-n+w)\left(1 - \frac{t_0}{2}\right) + (n+w)\left(\frac{t_0}{2}\right).\] There are now two possibilities:
\begin{enumerate}
	\item There exists $(a',b') \in \mbox{supp}(\xi')$ for which \[\mbox{gr}_{j_{t_0}}(a',b') = \mbox{gr}_{j_{t_0}}({\bf v}_-) = (-n+w)\left(1 - \frac{t_0}{2}\right) + (n+w)\left(\frac{t_0}{2}\right).\]
	\item There does not exist such an $(a',b') \in \mbox{supp}(\xi')$, which implies that \[a'\left(1 - \frac{t_0}{2}\right) + b'\left(\frac{t_0}{2}\right) > (-n+w)\left(1 - \frac{t_0}{2}\right) + (n+w)\left(\frac{t_0}{2}\right)\] for all $(a',b') \in \mbox{supp}(\xi')$.
\end{enumerate}

In Case (1), there exists $\delta >0$ for which $\mbox{gr}_{j_t}(a',b') < \mbox{gr}_{j_t}(-n+w,n+w)$ for all $t \in (t_0, t_0+\delta)$, which tells us that $\mbox{gr}_{j_t}(\xi) \neq \mbox{gr}_{j_t}({\bf v}_-)$ for $t \in (t_0, t_0 + \delta)$, contradicting the assumption that \[m_{t_0}(\widehat{\sigma})  = m_{t_0}(-n+w,n+w) = n.\]

\begin{figure}
\centering
\includegraphics[width=6cm]{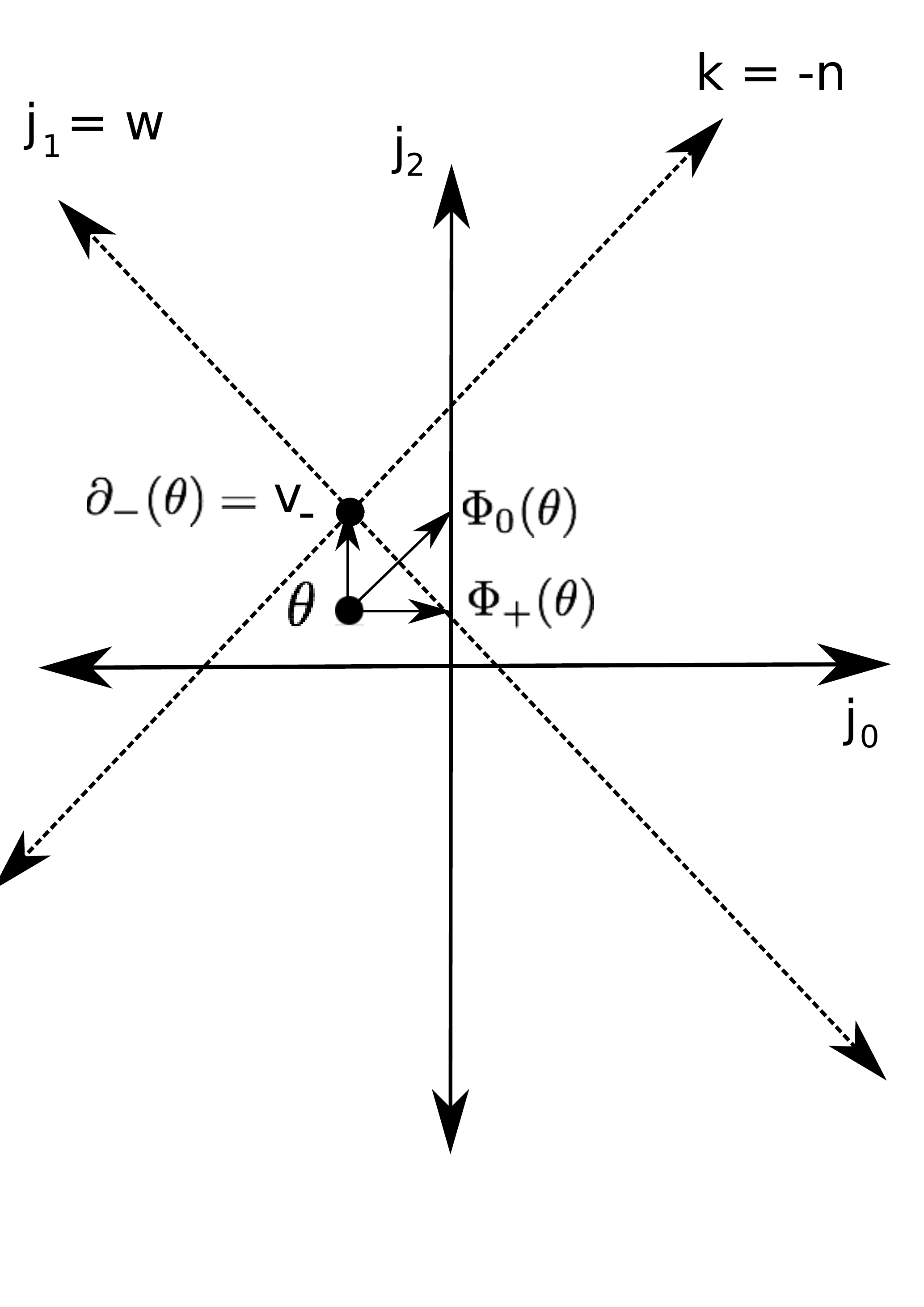}
\caption{If $\sigma \in \Braid_n$ has writhe $w$ and $\kappa(\widehat{\sigma}) = 2$, there exists $\theta \in \mathcal{C}^{Lee}(\widehat{\sigma})$ with $\mbox{gr}_{j_0}(\theta) = \mbox{gr}_{j_0}({\bf v}_-) = -n+w$ and $\mbox{gr}_k(\theta) = -n+2$ with $\partial(\theta) = \partial_-(\theta) = {\bf v}_-$. But this implies that for each $t \in [0,1)$, the $j_t$--grading of a class $\xi$ representing $[\mathfrak{s}_{o_\uparrow}]$ with nontrivial projection to $\mbox{Span}\{{\bf v}_-\}$ can be improved by subtracting an appropriate multiple of $(\partial + \Phi)(\theta)$. In the language of the ``racing team" analogy of Section \ref{sec:algprelim}, the team members $\Phi_0(\theta)$ and $\Phi_+(\theta)$ are ``faster" than $\partial_-(\theta)$, regardless of which $j_t$ is the judge. So a team can always be improved by replacing $\partial_-(\theta) = {\bf v}_-$ by $\Phi_0(\theta)$ and $\Phi_+(\theta)$.}
\label{fig:Kappa2}
\end{figure}

In Case (2), consider $\xi'' := \xi - (\partial + \Phi)(c\theta)$, which satisfies $[\xi''] = [\xi] = [\mathfrak{s}_{o_\uparrow}] \in H_*(\mathcal{C}^{Lee},\partial^{Lee})$. Recalling the degrees of $\partial_-, \Phi_0,$ and $\Phi_+$ (Lemma \ref{lem:Leediffgr}), let \[S := \mbox{supp}(\xi'') \subseteq \mbox{supp}(\xi') \cup \{(-n+w+4,n+w - 4), (-n+w + 4, n+w)\}.\] One now easily verifies (cf. Figure \ref{fig:Kappa2}) that \[\mbox{gr}_{j_{t_0}}(\xi'') = \min_{(a,b) \in S}\left\{a\left(1 - \frac{t_0}{2}\right) + b\left(\frac{t_0}{2}\right)\right\}\] is strictly greater than $\mbox{gr}_{j_{t_0}}({\bf v}_-)$, contradicting the assumption that $\mbox{gr}_{j_{t_0}}(\xi) = \mbox{gr}_{j_{t_0}}[\mathfrak{s}_{o_\uparrow}]$.

\end{proof}

On the other hand, the possible subsets of $[0,1)$ upon which $m_t(\widehat{\sigma}) = n$ is severely constrained by Theorem \ref{thm:d1=w}:

\begin{proposition} \label{prop:maxconstrain} Let $\sigma \in \Braid_n$ have writhe $w$. If $m_{t_0}(\widehat{\sigma}) =n$ for some $t_0 \in [0,1)$, then $m_t (\widehat{\sigma}) = n$ for all $t \in [t_0, 1)$.
\end{proposition}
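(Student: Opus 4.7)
The plan is to show that, if $m_{t_0}(\widehat{\sigma}) = n$, then in fact $d_{t_0}(\widehat{\sigma})$ achieves the Plamenevskaya--Shumakovitch lower bound of Lemma \ref{lem:sliceBenn}, and then to combine this with $d_1(\widehat{\sigma}) = w$ from Theorem \ref{thm:d1=w} to force the average slope on $[t_0, 1]$ to equal the maximum possible value $n$.

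First I would use Remark \ref{rmk:kgrading}: the assumption $m_{t_0}(\widehat{\sigma}) = n$ means there is a lattice point $(a,b)$ with $\mathrm{gr}_{j_{t_0}}(a,b) = d_{t_0}(\widehat{\sigma})$ and $\mathrm{gr}_k(a,b) = -n$. But as noted in the proof of Theorem \ref{thm:RV}, the Plamenevskaya cycle ${\bf v}_-$ is the unique distinguished basis element of $k$-grading $-n$, and it is supported at the lattice point $(-n+w,\,n+w)$. A direct computation then gives
\[
d_{t_0}(\widehat{\sigma}) = \mathrm{gr}_{j_{t_0}}(-n+w,\,n+w) = (-n+w)\bigl(1-\tfrac{t_0}{2}\bigr) + (n+w)\bigl(\tfrac{t_0}{2}\bigr) = w - n(1-t_0).
\]

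Next I would invoke Theorem \ref{thm:d1=w} to get $d_1(\widehat{\sigma}) = w$. The average slope of the piecewise linear function $d_t(\widehat{\sigma})$ on $[t_0,1]$ is therefore
\[
\frac{d_1(\widehat{\sigma}) - d_{t_0}(\widehat{\sigma})}{1 - t_0} = \frac{w - \bigl(w - n(1-t_0)\bigr)}{1 - t_0} = n.
\]
By part (4) of Theorem \ref{thm:main}, $m_t(\widehat{\sigma}) \leq n$ for every $t \in [0,2)$. Since the average over $[t_0,1]$ of a piecewise constant function taking values at most $n$ can equal $n$ only if that function equals $n$ on every subinterval of positive length, we conclude $m_t(\widehat{\sigma}) = n$ for all $t \in [t_0, 1)$.

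There is no real obstacle here: the whole argument is an application of the tools already assembled. The only subtle point is the identification $d_{t_0}(\widehat{\sigma}) = w - n(1 - t_0)$, which rests on the uniqueness of the lattice point of maximal ``speed" $n$ (i.e., minimal $k$-grading $-n$), a feature peculiar to annular braid closures and already exploited in the proof of Theorem \ref{thm:RV}.
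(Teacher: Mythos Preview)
Your argument is correct and follows essentially the same route as the paper's own proof: identify $d_{t_0}(\widehat{\sigma}) = (-n+w) + nt_0$ via the uniqueness of the $k$-grading $-n$ lattice point, then combine $d_1(\widehat{\sigma}) = w$ with the slope bound $m_t \leq n$ to force $m_t = n$ on $[t_0,1)$. Your explicit average-slope computation simply makes precise what the paper leaves as an immediate consequence.
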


\begin{proof} As in the proof of Theorem \ref{thm:RV}, we observe that there is a unique distinguished basis vector, ${\bf v}_-$, with $k$--grading $-n$. In view of Remark \ref{rmk:kgrading}, this implies that if $m_{t_0}(\widehat{\sigma}) = n$, then \[j_{t_0}([\mathfrak{s}_{o_\uparrow}(\widehat{\sigma})]) = j_{t_0}({\bf v}_-) = (-n+w) + nt_0.\] But Theorem \ref{thm:d1=w}, combined with the fact (part (4) of Theorem \ref{thm:main}) that $m_t(\widehat{\sigma})\leq n$ for all $t \in [0,2]$ then implies that $m_t(\widehat{\sigma}) = n$ for all $t \in [t_0, 1)$, as desired.
\end{proof}

The annular Rasmussen invariants are additive under horizontal composition:

\begin{proposition} \label{prop:additivity} Let $(L,o), (L',o') \subset A \times I$, and let $(L,o) \amalg (L',o') \subset A \times I$ denote their {\em annular composition}, as in Figure \ref{fig:horizstack}. Then for all $t \in [0,2]$, \[d_t((L,o) \amalg (L',o')) = d_t(L,o) + d_t(L',o').\]
\end{proposition}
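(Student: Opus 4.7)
The plan is to reduce this to a tensor product statement for Khovanov--Lee complexes and then apply a filtered K\"unneth argument.

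First I would choose the diagrams of $(L,o)$ and $(L',o')$ to be disjointly embedded in $S^2 \setminus \{\OO, \XX\}$, with $\cP(L)$ occupying an inner sub-annular region (closer to $\XX$) and $\cP(L')$ occupying an outer sub-annular region (closer to $\OO$). The arc $\gamma$ from $\XX$ to $\OO$ used to define the $k$--grading can then be chosen to pass through both sub-annuli without crossing either diagram. This makes the diagram for the annular composition literally the disjoint union $\cP(L) \sqcup \cP(L')$, with no crossings between the two sublinks.

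Since the cube of resolutions factors as a product, I would next verify that there is a natural identification
\[
\mathcal{C}^{Lee}((L,o) \amalg (L',o')) \;\cong\; \mathcal{C}^{Lee}(L,o) \otimes_{\F} \mathcal{C}^{Lee}(L',o'),
\]
as $(\Z \oplus \Z)$--filtered chain complexes with filtered graded bases. The Lee differential of the composition is the tensor product differential because every split/merge and every Lee deformation term is local to one of the two factors. Each of the $i,j,k$ gradings is defined locally on circles of a Kauffman state (the $k$--grading uses our choice of $\gamma$ so that the trivial/nontrivial character of a component is preserved), so all three gradings and hence the $(\Z \oplus \Z)$--filtration are additive under pure tensors. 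From the definitions of the canonical Lee cycles in terms of alternating $\mathbf{a}/\mathbf{b}$ markings on the oriented resolution, one reads off directly that $\mathfrak{s}_{o \amalg o'} = \mathfrak{s}_o \otimes \mathfrak{s}_{o'}$. The proposition therefore reduces to showing, for each $t \in [0,2]$,
\[
\mbox{gr}_{j_t}\!\bigl([\mathfrak{s}_o \otimes \mathfrak{s}_{o'}]\bigr) \;=\; \mbox{gr}_{j_t}\!\bigl([\mathfrak{s}_o]\bigr) + \mbox{gr}_{j_t}\!\bigl([\mathfrak{s}_{o'}]\bigr).
\]

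The inequality $\geq$ is free: if $x, x'$ are cycles realizing the $j_t$--gradings in the two factors, then $x \otimes x'$ represents $[\mathfrak{s}_o \otimes \mathfrak{s}_{o'}]$ and has $j_t$--grade equal to the sum. The main obstacle is the reverse inequality, for which I would invoke a filtered K\"unneth/cancellation argument: over the field $\F$, any discrete bounded $\R$--filtered chain complex with a finite filtered graded basis is filtered chain homotopy equivalent to a ``minimal form'' whose differential vanishes and whose underlying space is the homology equipped with an induced $\R$--filtration (this is a standard cancellation, in the spirit of the bar-code classification). Applying this reduction to each factor and then tensoring produces a filtered chain model for $\mathcal{C}^{Lee}((L,o)\amalg (L',o'))$ in which the $j_t$--grade of every pure tensor of basis elements is literally the sum of the grades of its factors. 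Passing this filtered equivalence through Lemma \ref{lem:ZplusZtoR} then yields the needed upper bound. The technical heart of the argument is therefore this existence of a filtered minimal model; once established, additivity is immediate.
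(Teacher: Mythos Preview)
Your tensor-product decomposition and the filtered K\"unneth/minimal-model step are essentially the approach the paper takes (the paper outsources the K\"unneth inequality to \cite{OSSUpsilon} and \cite{LivingstonUpsilon} rather than building a minimal model, but the content is the same). However, there is a genuine gap in your identification of the canonical Lee cycle.

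The claim that one ``reads off directly'' $\mathfrak{s}_{o \amalg o'} = \mathfrak{s}_o \otimes \mathfrak{s}_{o'}$ is false in general. Lee's $\mathbf{a}/\mathbf{b}$ marking on a circle of the oriented resolution is determined by the parity of that circle's nesting depth in the \emph{entire} diagram, not locally within its factor. With your placement ($L$ inner, $L'$ outer), every nontrivial circle in the oriented resolution of $L'$ encloses all circles of $L$; hence each circle of $L$ has its nesting depth shifted by the number of nontrivial $L'$--circles. When that number is odd---equivalently, when the wrapping number of $L'$ is odd---every $\mathbf{a}$ on the $L$ side becomes a $\mathbf{b}$ and vice versa, which is exactly the effect of reversing the orientation. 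Thus
\[
\mathfrak{s}_{o \amalg o'} \;=\;
\begin{cases}
\mathfrak{s}_{o} \otimes \mathfrak{s}_{o'} & \text{if the outer factor has even wrapping number},\\
\mathfrak{s}_{\overline{o}} \otimes \mathfrak{s}_{o'} & \text{if the outer factor has odd wrapping number}.
\end{cases}
\]
The paper makes exactly this case distinction and then invokes Lemma~\ref{lem:opporient}, which gives $d_t(L,\overline{o}) = d_t(L,o)$ for all $t$, so that either case yields the same sum. Without this orientation-reversal lemma your argument only establishes the proposition when the outer link has even wrapping number; in the odd case your K\"unneth step computes $\mbox{gr}_{j_t}[\mathfrak{s}_{\overline{o}}] + \mbox{gr}_{j_t}[\mathfrak{s}_{o'}]$, and you still need Lemma~\ref{lem:opporient} to identify this with $d_t(L,o) + d_t(L',o')$.
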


\begin{figure}
\centering
\includegraphics[width=6cm]{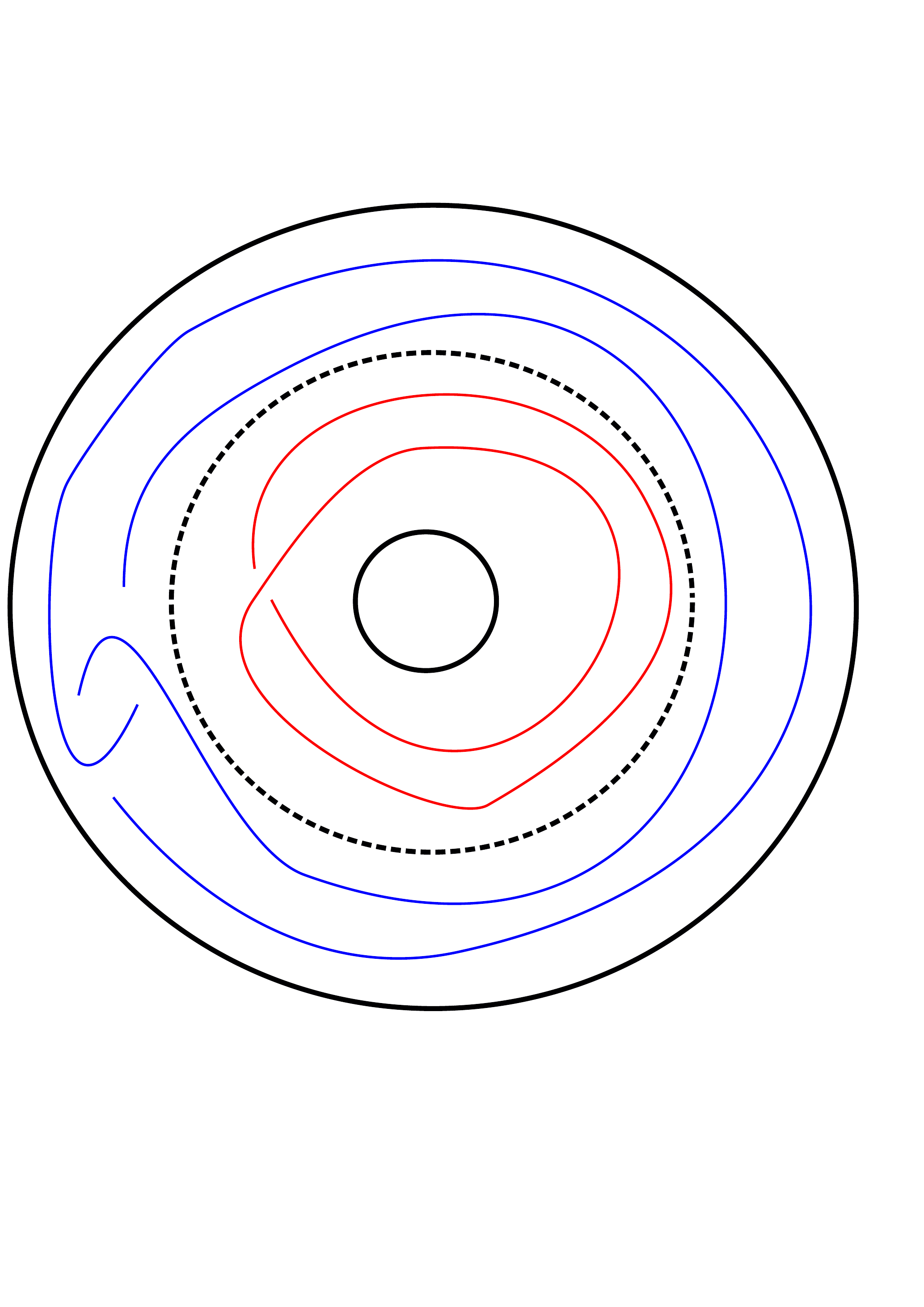}
\caption{A diagram of the annular composition of two links $L, L' \subset A \times I$ is formed by identifying the inner boundary of the annulus containing (a diagram of) $L$ (in blue) with the outer boundary of the annulus containing (a diagram of) $L'$ (in red).}
\label{fig:horizstack}
\end{figure}

\begin{proof} As $\Z \oplus \Z$--filtered complexes, \[\mathcal{C}^{Lee}(L \amalg L') = \mathcal{C}^{Lee}(L) \otimes \mathcal{C}^{Lee}(L').\] Moreover, $\mathfrak{s}_{o \amalg o'}$ is $\mathfrak{s}_{o} \otimes \mathfrak{s}_{o'}$ (resp., $\mathfrak{s}_{o} \otimes \mathfrak{s}_{\overline{o}'}$) when the wrapping number, $\omega$, of $L$ is even (resp., odd).  Appealing to Lemma \ref{lem:opporient}, this implies (cf. \cite[Prop. 1.8]{OSSUpsilon} and \cite[Thm. 7.2]{LivingstonUpsilon}) that for each $t \in [0,2]$, $j_t[\mathfrak{s}_{o \amalg o'}] = j_t[\mathfrak{s}_o] + j_t[\mathfrak{s}_{o'}]$, as desired. 
\end{proof}

\begin{corollary} \label{cor:additivity} Let $(L,o), (L',o') \subset A \times I$. Then for all $t \in [0,2]$, \[m_t((L,o) \amalg (L',o')) = m_t(L,o) + m_t(L',o').\]
\end{corollary}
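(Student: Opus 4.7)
My plan is to observe that this corollary is essentially a direct consequence of Proposition \ref{prop:additivity} combined with the definition of $m_t$ as a right-hand slope (see part (4) of Theorem \ref{thm:main}).

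The first step is to recall the definition
\[m_t(L,o) := \lim_{\epsilon \to 0^+} \frac{d_{t+\epsilon}(L,o) - d_t(L,o)}{\epsilon},\]
which exists for every $t \in [0,2)$ because Theorem \ref{thm:main}(4) tells us that $d_t$ is piecewise linear in $t$ (so the right-hand derivative is well-defined).

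The second step is to plug in Proposition \ref{prop:additivity}. For each fixed $\epsilon > 0$ small enough, the equality $d_s((L,o) \amalg (L',o')) = d_s(L,o) + d_s(L',o')$ holds both at $s = t$ and at $s = t + \epsilon$. Subtracting and dividing by $\epsilon$, then using linearity of limits (both right-hand limits exist, again by piecewise linearity), yields
\[m_t((L,o) \amalg (L',o')) = m_t(L,o) + m_t(L',o'),\]
as desired.

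There is no real obstacle here: the only thing to verify is that the relevant right-hand limits exist individually, which is immediate from piecewise linearity, so that the splitting of the limit of a sum into a sum of limits is valid. In short, the corollary is a two-line consequence of Proposition \ref{prop:additivity} and the fact that $d_t$ is piecewise linear.
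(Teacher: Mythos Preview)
Your proof is correct and is precisely the argument the paper intends: the corollary is stated without proof immediately after Proposition~\ref{prop:additivity}, and your two-line derivation (additivity of $d_t$ plus existence of right-hand derivatives from piecewise linearity) is exactly the implicit reasoning.
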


\begin{proposition} \label{prop:stabbound} Let $\sigma \in \Braid_n$, and suppose $\sigma^{\pm} \in \Braid_{n+1}$ is obtained from $\sigma$ by either a positive or negative Markov stabilization. Then for all $t \in [0,1],$ \[d_t(\widehat{\sigma}) - t \leq d_t(\widehat{\sigma}^{\pm}) \leq d_t(\widehat{\sigma}) + t.\]
\end{proposition}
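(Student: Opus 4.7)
The plan is to exhibit an explicit oriented cobordism $F$ from $\widehat{\sigma}$ to $\widehat{\sigma^{\pm}}$ and apply part (6) of Theorem \ref{thm:main}. Any Markov stabilization can be described by the following $2$-step movie presentation: start with $\widehat{\sigma}$, then introduce a new trivial braid closure (a single circle wrapping once around the braid axis, placed to the right of $\widehat{\sigma}$), then perform an oriented saddle that merges this new circle with the rightmost strand of $\widehat{\sigma}$ in a neighborhood of the would-be stabilization crossing. The resulting closed braid is isotopic to $\widehat{\sigma^{\pm}}$, with the sign of the stabilization determined by the sense in which the saddle is performed.

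Next I would count critical points of $F$ by type. The saddle is local to a small disk in $S^2 \setminus \{\OO,\XX\}$, hence is an \emph{annular} odd-index critical point; this contributes $a_1=1$, $a_0=0$. The newly born circle wraps nontrivially around the braid axis, so it is a nontrivial circle in the annulus. Thus the birth is \emph{non-annular}, contributing $b_0 = 1$. Since the new circle carries the braid-like orientation compatible with that of $\widehat{\sigma^{\pm}}$, and the saddle is (up to reversal) the oriented resolution of the stabilization crossing, $F$ is orientable compatibly with the braid-like orientations $o_\uparrow$ on $\widehat{\sigma}$ and $\widehat{\sigma^{\pm}}$.

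The third step is to verify that every component of $F$ has boundary in both $\widehat{\sigma}$ and $\widehat{\sigma^{\pm}}$ so that part (6) of Theorem \ref{thm:main} applies. Let $L_1 \subseteq \widehat{\sigma}$ denote the component containing the rightmost strand, and let $U$ denote the newly born circle. The components $L_i$ for $i \neq 1$ appear as product cylinders in $F$, each with boundary at both ends. The remaining component of $F$ is built from $L_1 \times [0,1]$, the disk filling $U$, and a band (the saddle $1$-handle) connecting them; this surface has Euler characteristic $0+1-1=0$, hence is a cylinder whose two boundary components are $L_1 \subset \widehat{\sigma}$ and the merged component of $\widehat{\sigma^{\pm}}$.

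Applying part (6) of Theorem \ref{thm:main} with $a_0=0$, $a_1=1$, $b_0=1$ yields
\[|d_t(\widehat{\sigma}) - d_t(\widehat{\sigma^{\pm}})| \leq (a_1 - a_0) - b_0(1-t) = 1 - (1-t) = t,\]
which for $t \in [0,1]$ gives exactly the desired inequality $d_t(\widehat{\sigma}) - t \leq d_t(\widehat{\sigma^{\pm}}) \leq d_t(\widehat{\sigma}) + t$. There is no real obstacle beyond carefully confirming the component-by-component structure of $F$ described above; the restriction $t \in [0,1]$ arises because the factor $b_0(1-t)$ is only non-negative (and thus makes the bound on $|d_t(\widehat{\sigma}) - d_t(\widehat{\sigma^{\pm}})|$ improve upon the pure critical-point count) on this subinterval.
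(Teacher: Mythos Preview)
Your proof is correct and takes a genuinely different, more economical route than the paper's argument.

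The paper establishes the two inequalities separately. For the upper bound $d_t(\widehat{\sigma}^{\pm}) \leq d_t(\widehat{\sigma}) + t$, it uses the single-saddle annular cobordism $\widehat{\sigma}^{\pm} \to \widehat{\sigma} \amalg \widehat{\Id}_1$, then invokes the additivity result (Proposition~\ref{prop:additivity}) together with the explicit computation $d_t(\widehat{\Id}_1) = -1+t$. For the lower bound $d_t(\widehat{\sigma}) - t \leq d_t(\widehat{\sigma}^{\pm})$, it instead analyzes the (non-annular) Reidemeister~I chain map $\rho_1'$ directly and computes its filtered $j_t$--degree to be $-t$.

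Your argument packages both directions into a single appeal to part~(6) of Theorem~\ref{thm:main}, applied to the cobordism with one non-annular birth and one annular saddle. This sidesteps both the additivity proposition and the explicit degree calculation for $\rho_1'$: the contribution $b_0(1-t)$ in the cobordism bound already encodes exactly the effect of adjoining $\widehat{\Id}_1$. The check that every component of your $F$ meets both boundary links is what makes part~(6) (rather than just part~(5)) available, and you carry this out carefully. One small imprecision: the phrase ``the sign of the stabilization determined by the sense in which the saddle is performed'' is a bit loose---more precisely, the band attachment realizing the odd-index critical point can be chosen so that the resulting diagram, after an annular Reidemeister move, is either $\widehat{\sigma}^+$ or $\widehat{\sigma}^-$. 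But this does not affect the critical-point count or the validity of the argument.
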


\begin{proof} Consider the obvious oriented annular cobordism from $\widehat{\sigma}^{\pm}$ to $\widehat{\sigma} \amalg \widehat{\Id}_1$ (the horizontal composition of $\widehat{\sigma}$ with the trivial $1$--braid closure) with a single odd-index critical point that resolves the extra $\pm$ crossing. By \cite[Prop. 4.1]{Rasmussen_Slice}, we see that the associated chain map on the Lee complex sends $\mathfrak{s}_{o_\uparrow}(\widehat{\sigma}^\pm)$ to $\mathfrak{s}_{o_\uparrow}(\widehat{\sigma} \amalg \widehat{\Id}_1)$. By Proposition \ref{prop:cobdegree}, this map is filtered of degree $-1$ for all $t \in [0,1]$. Therefore, \[d_t(\widehat{\sigma}^+) - 1 \leq d_t(\widehat{\sigma} \amalg \widehat{\Id}_1).\] Proposition \ref{prop:additivity} and Theorem \ref{thm:QP} then tell us $d_t(\widehat{\sigma} \amalg \widehat{\Id}_1) = d_t(\widehat{\sigma}) + (-1 + t)$, which gives us one of the two desired inequalities: \[d_t(\widehat{\sigma}^\pm) \leq d_t(\widehat{\sigma}) + t.\]

To obtain the other inequality, consider the quasi-isomorphism \[\rho_1': \mathcal{C}^{Lee}(\widehat{\sigma}) \rightarrow \mathcal{C}^{Lee}(\widehat{\sigma}^+)\] appearing in \cite[Sec. 6]{Rasmussen_Slice}. We have $\rho_1'(\mathfrak{s}_{o_\uparrow}(\widehat{\sigma})) = \mathfrak{s}_{o_\uparrow}(\widehat{\sigma}^+)$. Moreover, we compute that the filtered degree of $\rho_1'$ with respect to the $j_t$--grading is $-t$ (as usual, there are higher order terms: they are of degree $4-t, 4-3t,$ and $t$). This tells us that \[d_t(\widehat{\sigma}) - t \leq d_t(\widehat{\sigma}^+).\] A similar argument using the R1 map associated to the negative stabilization 
tells us \[d_t(\widehat{\sigma}) - t \leq d_t(\widehat{\sigma}^-)\] as well.

\end{proof}

\section{Transverse invariants and annular Khovanov-Lee homology} \label{sec:transverse}

In this section we study what annular Khovanov-Lee theory can tell us about transverse isotopy classes of transverse links with respect to the standard tight contact structure $\xi_{st}$ on $S^3$ (cf. \cite{EtnyreLegTransSurvey} for a survey of this topic). We have transverse versions of the classical Alexander and Markov theorems that allow us to study transverse links via closed braids:

\begin{theorem} \label{thm:TransAlex} \cite{Benn} Every transverse link $L \subset (S^3, \xi_{st})$ is transversely isotopic to a closed braid.
\end{theorem}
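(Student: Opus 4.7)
The plan is to follow Bennequin's original braiding procedure. Using cylindrical coordinates $(r,\theta,z)$ on $\R^3\subset S^3$, take the standard contact form $\alpha=dz+r^2 d\theta$, so that $\xi_{st}=\ker\alpha$. A smoothly parametrized link $\gamma$ is positively transverse to $\xi_{st}$ exactly when $\alpha(\dot\gamma)=\dot z+r^2\dot\theta>0$ everywhere, and $\gamma$ is a closed braid about the $z$-axis exactly when it is disjoint from the $z$-axis and $\dot\theta>0$ everywhere. The goal is therefore to transversely isotope an arbitrary transverse link $L$ to one on which $\dot\theta>0$.

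First I would reduce to a local modification. After a small generic transverse perturbation, one can assume that $L$ is disjoint from the $z$-axis, meets each page $\{\theta=\mathrm{const}\}$ transversely except at finitely many points, and that the closed set $B\subset L$ on which $\dot\theta\leq 0$ is a disjoint union of finitely many compact ``bad arcs.'' Along each bad arc, the transversality inequality forces $\dot z > -r^2\dot\theta\geq 0$, so bad arcs are strictly monotone in $z$. For each such arc, choose a contractible neighborhood $U$ that is disjoint both from the $z$-axis and from the rest of $L$, and replace the arc by one with the same endpoints running along a helix of large radius $R$ around the $z$-axis, monotone in $\theta$. Along such a helical arc, $\dot\theta>0$, and for $R$ sufficiently large the $r^2\dot\theta$ term dominates $\dot z$, so the replacement is transverse. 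A one-parameter family interpolating the bad arc with its helical replacement, obtained by combining a radial push outward with a gradual spin-up of $\dot\theta$ via a cutoff function supported in $U$, realizes the replacement as a transverse isotopy provided one checks that $\alpha(\dot\gamma)>0$ for every member of the family.

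The main obstacle is to construct the interpolation so that this positivity is preserved at every intermediate time, and so that the local moves performed at distinct bad arcs do not interfere with each other. The key observation that makes the first point work is that once $\dot\theta$ has been made positive along an arc, radial outward translation only increases the dominant $r^2\dot\theta$ contribution to $\alpha(\dot\gamma)$, so transversality is preserved under the push; interference between separate bad arcs can then be ruled out by shrinking the neighborhoods $U$ and performing the modifications sequentially, using compactness of $L$ to conclude in finitely many steps. After completing the procedure, the resulting link satisfies $\dot\theta>0$ everywhere and is thus a closed braid about the $z$-axis, transversely isotopic to $L$.
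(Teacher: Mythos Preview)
The paper does not prove this theorem at all: it is stated with a citation to Bennequin and is used only as background for the transverse Markov theorem that follows. There is therefore no ``paper's own proof'' against which to compare your argument.

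Your sketch does follow the standard Bennequin braiding procedure, and the overall architecture is correct: isolate the finitely many bad arcs where $\dot\theta\le 0$, observe that transversality forces $\dot z>0$ there, and replace each bad arc by a positively winding helical arc. One point deserves more care. You write that ``once $\dot\theta$ has been made positive along an arc, radial outward translation only increases the dominant $r^2\dot\theta$ contribution,'' but the delicate step is precisely the interpolation \emph{before} $\dot\theta$ is positive. On a bad arc with $\dot\theta<0$, pushing radially outward makes $r^2\dot\theta$ more negative and can destroy transversality; and simply adding $s\phi(z)$ to $\theta$ for a bump function $\phi$ can fail where $\dot\phi<0$. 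The usual fix is the opposite of what you suggest: first push the bad arc radially \emph{inward} toward the $z$-axis (this stays transverse because $\dot z>0$ dominates when $r$ is small), then freely adjust $\theta$ to make $\dot\theta>0$ (harmless since $r^2\dot\theta$ is negligible), and only then push back out to large radius. With that correction your plan goes through.
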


\begin{theorem} \label{thm:TransMark} \cite{OrShev,Wrinkle} Two closed braids are transversely isotopic iff they are related by a finite sequence of closed braid isotopies (braid isotopies and conjugations) and {\em positive} stabilizations and destabilizations.
\end{theorem}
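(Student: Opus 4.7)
The plan is to break the theorem into a sufficiency direction (closed braids related by conjugations, braid isotopies and positive Markov (de)stabilizations are transversely isotopic) and a necessity direction (transversely isotopic closed braids are related by such moves). Since a closed braid $\widehat{\sigma}\subset (\R^3,\xi_{st})$ inherits a canonical transverse structure once $\xi_{st}$ is identified with the standard contact structure making the $z$-axis Legendrian and the braid fibration $d\theta\neq 0$ transverse to $\xi_{st}$, I would first verify the sufficiency direction: braid isotopies and conjugations are obviously transverse isotopies (they take place through closed braids), and a positive Markov stabilization can be realized by a local Reidemeister-I-like move inside a small $C^0$-neighborhood of a strand, which can be carried out through the one-parameter family of transverse knots because the inserted crossing has the correct sign to keep the tangent vector in the half-space $\{d\theta>0\}$.

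For the necessity direction the starting point is the classical Markov theorem, giving a finite sequence of braid isotopies, conjugations, and Markov (de)stabilizations (of either sign) between the two braids. The self-linking number $\mathrm{sl}(\widehat{\sigma})=w(\sigma)-n$ is a transverse invariant, and positive stabilization preserves $\mathrm{sl}$ while negative stabilization decreases it by $2$; so the two given braids have equal self-linking numbers, and the net count of negative stabilizations minus destabilizations along the sequence is zero. The key step is then to show that every negative stabilization in the classical Markov sequence can be traded for a combination of positive stabilizations, conjugations and braid isotopies while staying within the same transverse isotopy class. This is the technical heart of the theorem.

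I would carry this out using the braid-foliation technology of Birman--Menasco as refined by Wrinkle: given a transverse isotopy $F$ from $\widehat{\sigma}_0$ to $\widehat{\sigma}_1$, one extends $F$ to a smoothly embedded annulus $A\subset S^3\times [0,1]$ transverse to $\partial_\theta$ on the complement of finitely many tangencies, and studies the singular foliation induced on $A$ by the half-planes of the open book $\{d\theta=0\}$. One inductively normalizes this foliation, removing inessential $bb$- and $ab$-tiles via exchange moves and isotopies, until the only remaining singularities correspond to braid isotopies, conjugations, and positive (de)stabilizations. The hard part will be the combinatorial argument showing that the negative elliptic/hyperbolic tile pairs can always be removed or converted into positive ones using transverse isotopies of $A$; this is where one essentially reproves Wrinkle's ``change of foliation'' lemma, and where the positivity of $\xi_{st}$ (equivalently, the fact that $\partial_\theta$ is a positive contact vector field) enters in an essential way. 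A completely different route, which I would only sketch as an alternative, is the Orevkov--Shevchishin approach using pseudoholomorphic curves in the symplectization of $(S^3,\xi_{st})$; there the analogous step is that any symplectic cobordism between transverse braid closures can be foliated by holomorphic disks, and the ends of such disks yield the desired sequence of positive Markov moves.
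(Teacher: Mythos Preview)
The paper does not prove this theorem. It is stated as background, with the proof outsourced entirely to the cited references \cite{OrShev,Wrinkle}; no argument, sketch, or indication of method appears in the paper beyond the citation. There is therefore nothing in the paper to compare your proposal against.

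That said, your outline is a reasonable summary of what those references actually do. The sufficiency direction is straightforward and your description is fine. For necessity, your sketch of Wrinkle's braid-foliation argument captures the shape of the proof: start from a classical Markov sequence, study the foliation induced on the trace annulus by the open-book pages, and eliminate the tiles corresponding to negative (de)stabilizations via exchange moves and change-of-foliation lemmas. One small inaccuracy: the Orevkov--Shevchishin proof is not via pseudoholomorphic curves in the symplectization. Their argument is combinatorial/topological, working with movies of braided surfaces and a careful analysis of how transverse isotopies can be factored through elementary moves; no $J$-holomorphic analysis enters. If you intend to cite their method as an alternative route, you should correct that description.
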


Recall that the {\em self-linking number} is a classical invariant of a transverse link $T \subset S^3$ obtained by choosing a trivialization given by a nonzero vector field of $\xi$ over $\Sigma$, a Seifert surface bounded by $T$, and computing the linking number with $T$ of the associated push-off,  $T'$ (cf. \cite[Sec. 2.6.3]{EtnyreLegTransSurvey}). When $T$ is represented by a closed braid $\sigma \in \Braid_n$ with writhe $w$, we have: \[\mbox{sl}(\widehat{\sigma}) = -n + w.\]

The following set was defined in \cite{EtnyreVHM}:

\begin{definition}
$\mathcal{S} := \{\mbox{Braids } \sigma \,\,|\,\, \mbox{sl}(\widehat{\sigma}) = s(\widehat{\sigma}) - 1\}$
\end{definition}

Informally, this is the set of braids for which the s-Bennequin bound is sharp. Since $s(\widehat{\sigma})$ is a link invariant, and $\mbox{sl}(\widehat{\sigma})$ is a transverse link invariant, it follows that membership in $\mathcal{S}$ is an invariant of the transverse isotopy class of $\widehat{\sigma}$. It is proven in \cite{EtnyreVHM} that $\mathcal{S}$ is a monoid. That is, $\Id_n \in \mathcal{S} \cap \Braid_n$, and if $\sigma, \sigma' \in \mathcal{S} \cap \Braid_n$, then so is their product: $\sigma\sigma' \in \mathcal{S} \cap \Braid_n$.

In Definition \ref{defn:maxslope} we define a family of subsets of $\Braid_n$ using the annular Rasmussen invariants. We show directly that each of these subsets is a monoid and that membership in each of the subsets is a transverse invariant. Unfortunately, we also show that each of these subsets agrees with the monoid $\mathcal{S}$, so these transverse invariants all fail to be {\em effective}. Recall that a transverse invariant $f$ is said to be {\em effective} if there exists at least one link $L$ and transverse representatives $T_L, T_L'$ of $L$ satisfying $\mbox{sl}(T_L) = \mbox{sl}(T_L')$ and $f(T_L) \neq f(T_L')$.

\begin{definition} \label{defn:maxslope} Let $t_0 \in [0,1)$. Define \[\mathcal{M}_{t_0} := \{\mbox{Braids } \sigma \,\,|\,\, m_t(\widehat{\sigma}) \mbox{ is maximal for all } t \in [0,t_0).\}\]
\end{definition}

Part (4) of Theorem \ref{thm:main} tells us that $n$ is the maximal possible slope among slopes of $d_t(\widehat{\sigma})$ when $\sigma \in \Braid_n$. Accordingly, we define:

\begin{definition} Let $t_0 \in [0,1)$. Define \[\mathcal{M}^n_{t_0} := \mathcal{M}_{t_0} \cap \Braid_n = \{\sigma \in \Braid_n \,\,|\,\, m_t(\widehat{\sigma}) =n \mbox{ for all } t \in [0,t_0).\}\]
\end{definition}

We have the following:

\begin{theorem} \label{thm:transverse} Let $t_0 \in [0,1)$: 
	\begin{enumerate}
		\item Membership in $\mathcal{M}_{t_0}$ is a transverse invariant
		\item $\mathcal{M}^n_{t_0}$ is a monoid for each $n \in \Z^+$
		\item $\mathcal{M}_{t_0} = \mathcal{S}$
	\end{enumerate}
\end{theorem}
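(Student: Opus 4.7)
The plan is to prove claim (3) first, since claims (1) and (2) will follow as easy consequences combined with known facts about $\mathcal{S}$.

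To prove $\mathcal{M}_{t_0} \subseteq \mathcal{S}$, take $\sigma \in \mathcal{M}_{t_0}^n$ with $t_0 > 0$. The hypothesis gives some $t \in [0,t_0)$ with $m_t(\widehat{\sigma}) = n$, so Proposition \ref{prop:maxconstrain} upgrades this to $m_t(\widehat{\sigma}) = n$ for all $t \in [0,1)$. Combining piecewise linearity with $d_1(\widehat{\sigma}) = w$ from Theorem \ref{thm:d1=w}, one concludes $d_0(\widehat{\sigma}) = w - n$. By part (3) of Theorem \ref{thm:main}, $d_0(\widehat{\sigma}) = s(\widehat{\sigma}) - 1$, so $s(\widehat{\sigma}) - 1 = -n + w = \mathrm{sl}(\widehat{\sigma})$, i.e., $\sigma \in \mathcal{S}$.

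For the reverse inclusion $\mathcal{S} \subseteq \mathcal{M}_{t_0}$, suppose $\sigma \in \mathcal{S} \cap \Braid_n$. Then $d_0(\widehat{\sigma}) = s(\widehat{\sigma}) - 1 = \mathrm{sl}(\widehat{\sigma}) = -n + w$ and $d_1(\widehat{\sigma}) = w$ by Theorem \ref{thm:d1=w}. Since $d_t$ is piecewise linear on $[0,1]$ with $d_1 - d_0 = n$, the average slope equals $n$, and since part (4) of Theorem \ref{thm:main} bounds every slope above by $n$, every slope on $[0,1)$ must equal $n$. So $\sigma \in \mathcal{M}_{t_0}^n$ for every $t_0 \in [0,1)$. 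This completes the proof of (3); notice the striking consequence that $\mathcal{M}_{t_0}$ is actually independent of $t_0 \in (0,1)$.

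Claim (1) is then immediate from (3): $s$ is a link invariant and $\mathrm{sl}$ is a transverse invariant (a closed braid with writhe $w$ and braid index $n$ has $\mathrm{sl} = -n + w$, and this is preserved under closed braid isotopy and positive (de)stabilization), so membership in $\mathcal{S}$, hence in $\mathcal{M}_{t_0}$, is a transverse invariant. Claim (2) is likewise a direct consequence: $\mathcal{M}^n_{t_0} = \mathcal{S} \cap \Braid_n$ by (3), and the latter is shown to be a monoid in \cite{EtnyreVHM}.

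The main obstacle is really just identifying the clean characterization of $\mathcal{M}_{t_0}$ in terms of $d_0$; once one sees that the slope condition on any subinterval of $[0,1)$ forces the entire linear behavior $d_t = -n(1-t) + w$ (via Proposition \ref{prop:maxconstrain}, Theorem \ref{thm:d1=w}, and the universal slope bound), the equivalence with sharpness of the $s$-Bennequin bound is forced, and the transverse invariance and monoid structure are inherited from $\mathcal{S}$.
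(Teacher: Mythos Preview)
Your proof is correct and is precisely the shortcut the paper itself flags as available: immediately after stating the theorem, the authors remark that ``part (3) of Theorem~\ref{thm:transverse}, combined with the results of \cite{EtnyreVHM}, imply parts (1) and (2), but it will be instructive to prove each of the statements directly, as the lemmas involved in the proof may be of independent interest.'' You took the first road; the paper takes the second.

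Concretely, for (3) the paper argues the case $t_0=0$ first via the lattice description of $\mathcal{C}^{Lee}$ (using that $\mathbf{v}_-$ is the unique generator with $k$--grading $-n$) to identify $d_0$ with $-n+w$ when $m_0=n$, and conversely, then extends to all $t_0$ by Proposition~\ref{prop:maxconstrain}. Your argument replaces the lattice analysis with the average-slope trick (which is exactly Corollary~\ref{cor:sminus1=sl}) together with Theorem~\ref{thm:d1=w} and the slope bound from Theorem~\ref{thm:main}(4); this is shorter. For (1) and (2) the paper gives direct proofs using Lemmas~\ref{lem:posband}, \ref{lem:posdestab}, and~\ref{lem:horizstack}, whereas you inherit both from~$\mathcal{S}$. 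The paper's route has the advantage that those lemmas (especially \ref{lem:posband} and \ref{lem:posdestab}) are reused later (see Remark~\ref{rmk:mtprime}), but your deduction is perfectly valid.

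One small edge case: you write ``take $\sigma\in\mathcal{M}_{t_0}^n$ with $t_0>0$,'' leaving $t_0=0$ unaddressed. Read literally, the interval $[0,0)$ is empty and $\mathcal{M}_0$ would be all braids; the paper tacitly interprets $\mathcal{M}_0$ as the condition $m_0=n$ and handles this case explicitly. You should either note this convention or observe that your reverse-inclusion argument already shows $\mathcal{S}\subseteq\mathcal{M}_{t_0}$ for every $t_0$, and that the forward inclusion for $t_0=0$ follows from the $m_0=n$ interpretation via the same reasoning.
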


Of course, part (3) of Theorem \ref{thm:transverse}, combined with the results of \cite{EtnyreVHM}, imply parts (1) and (2), but it will be instructive to prove each of the statements directly, as the lemmas involved in the proof may be of independent interest.

\begin{lemma} \label{lem:posband} Let $\sigma \in \Braid_n$ have writhe $w$, and suppose that $\sigma' \in \Braid_n$ is obtained from $\sigma$ by inserting a single positive crossing. Then if $\sigma \in \mathcal{M}_{t_0}$ for some $t_0 \in [0,1)$, then $\sigma' \in \mathcal{M}_{t_0}$.
\end{lemma}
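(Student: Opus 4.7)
The plan is to exhibit a single annular oriented saddle cobordism between $\widehat{\sigma'}$ and $\widehat{\sigma}$ (obtained by resolving the inserted positive crossing in its braid-like direction), and to combine the resulting upper bound on $d_t(\widehat{\sigma'})$ with the matching lower bound coming from Lemma \ref{lem:sliceBenn}. The two bounds will pinch the invariant to a single value and thereby force maximal slope on $\widehat{\sigma'}$.

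First, I would argue that $\sigma \in \mathcal{M}_{t_0}$ actually forces maximal slope throughout all of $[0,1)$. The case $t_0 = 0$ is vacuous, so assume $t_0 \in (0,1)$ and pick any $s \in [0,t_0)$ with $m_s(\widehat{\sigma}) = n$; Proposition \ref{prop:maxconstrain} then upgrades this to $m_t(\widehat{\sigma}) = n$ for every $t \in [0,1)$. Combined with piecewise linearity (Theorem \ref{thm:main}(4)) and the endpoint value $d_1(\widehat{\sigma}) = w$ from Theorem \ref{thm:d1=w}, this yields
\[d_t(\widehat{\sigma}) = -n(1-t) + w \qquad \text{for all } t \in [0,1].\]

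Next I would describe the cobordism $F$ from $\widehat{\sigma'}$ to $\widehat{\sigma}$: perform the oriented (braid-like) $0$-resolution at the inserted positive crossing to obtain a single odd-index annular critical point with no even-index or non-annular critical points. The orientations on $\widehat{\sigma'}$ and $\widehat{\sigma}$ induced by $o_{\uparrow}$ are compatible with the orientation on $F$, and each component of $F$ has a boundary component in both links. Theorem \ref{thm:main}(5) (or equivalently Corollary \ref{cor:braidedcob}) therefore yields
\[d_t(\widehat{\sigma'}) - d_t(\widehat{\sigma}) \leq 1 \qquad \text{for all } t \in [0,2].\]
Substituting the formula for $d_t(\widehat{\sigma})$ from the previous paragraph shows, for $t \in [0,1]$,
\[d_t(\widehat{\sigma'}) \leq -n(1-t) + (w+1).\]
Since $\sigma'$ has braid index $n$ and writhe $w+1$, Lemma \ref{lem:sliceBenn} supplies the matching lower bound $d_t(\widehat{\sigma'}) \geq -n(1-t) + (w+1)$, so equality holds on all of $[0,1]$. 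Consequently $d_t(\widehat{\sigma'})$ has constant slope $n$ on $[0,1)$, which gives $m_t(\widehat{\sigma'}) = n$ for every $t \in [0, t_0)$ and hence $\sigma' \in \mathcal{M}_{t_0}$.

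No genuine obstacle is anticipated: once one recognizes that inserting a positive crossing is realized topologically by a single annular oriented saddle, the argument is just a sandwich between the cobordism-induced upper bound (Theorem \ref{thm:main}(5)) and the $s$-Bennequin-type lower bound (Lemma \ref{lem:sliceBenn}). The only preliminary step that requires care is the upgrade from ``maximal slope on $[0,t_0)$'' to ``maximal slope on all of $[0,1)$'', which is handled cleanly by Proposition \ref{prop:maxconstrain} and Theorem \ref{thm:d1=w}.
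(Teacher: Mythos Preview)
Your proof is correct and follows essentially the same approach as the paper: resolve the inserted positive crossing by a single annular saddle, apply Theorem~\ref{thm:main}(5) for the upper bound, and use the $s$--Bennequin-type lower bound (Lemma~\ref{lem:sliceBenn}, which the paper invokes as $j_t(\mathfrak{s}_{o_\uparrow}(\widehat{\sigma'})) = -n+(w+1)+nt$) to pinch $d_t(\widehat{\sigma'})$. The only difference is that you first upgrade maximal slope on $[0,t_0)$ to all of $[0,1)$ via Proposition~\ref{prop:maxconstrain} and Theorem~\ref{thm:d1=w}; this detour is harmless but unnecessary, since the sandwich argument already works directly on $[0,t_0)$.
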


\begin{proof} Since $\sigma \in \mathcal{M}_{t_0}$, we know that for each $t \in [0,t_0)$ we have 
\[d_t(\widehat{\sigma}) = j_t({\bf v}_-(\widehat{\sigma})) = (-n+w) + nt.\] But applying Theorem \ref{thm:main} to the Euler characteristic $-1$ annular cobordism $\widehat{\sigma'} \rightarrow \widehat{\sigma}$ that resolves the single extra positive crossing tells us:
\[d_t(\widehat{\sigma'}) -1 \leq d_t(\widehat{\sigma}) = (-n+w) + nt,\]
and hence: \[d_t(\widehat{\sigma'}) = j_t[\mathfrak{s}_\uparrow(\widehat{\sigma'})] \leq -n+(w+1) + nt.\]

But $j_t(\mathfrak{s}_\uparrow(\widehat{\sigma'})) = -n+(w+1) + nt$. So \[-n+(w+1) + nt \leq d_t(\widehat{\sigma'}),\] hence $m_t(\widehat{\sigma'}) = n$ for all $t \in [0,t_0)$, and $\sigma' \in \mathcal{M}_{t_0}$, as desired. 
\end{proof}

\begin{lemma} \label{lem:posdestab} Let $\sigma \in \Braid_n$, and suppose that $\sigma^+ \in \Braid_{n+1}$ is obtained from $\sigma$ by performing a positive stabilization. Then if $\sigma^+ \in \mathcal{M}_{t_0}$ for some $t_0 \in [0,1)$, then $\sigma \in \mathcal{M}_{t_0}$.
\end{lemma}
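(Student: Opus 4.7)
The plan is to show that once $\sigma^+ \in \mathcal{M}^{n+1}_{t_0}$, the $d_t$ function of $\widehat{\sigma^+}$ is completely pinned down on $[0,1]$, and then to use the stabilization inequality at $t=0$ together with the slope bound to force $d_t(\widehat{\sigma})$ to have maximal slope throughout $[0,1)$.

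First, I would use Proposition \ref{prop:maxconstrain} to extend the slope information: the hypothesis gives $m_t(\widehat{\sigma^+}) = n+1$ for $t \in [0,t_0)$, and the proposition (applied at any $t' \in [0,t_0)$) propagates this to $m_t(\widehat{\sigma^+}) = n+1$ for all $t \in [0,1)$. Combined with Theorem \ref{thm:d1=w}, which gives $d_1(\widehat{\sigma^+}) = w+1$ (where $w$ is the writhe of $\sigma$), this pins down
\[
d_t(\widehat{\sigma^+}) = (w-n) + (n+1)t \quad \text{for all } t \in [0,1].
\]

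Next, I would invoke Proposition \ref{prop:stabbound} at $t=0$: both inequalities collapse there, forcing $d_0(\widehat{\sigma}) = d_0(\widehat{\sigma^+}) = w-n$. Now I have two bounds on $d_t(\widehat{\sigma})$ over $[0,1]$: from above, the piecewise linearity and slope bound $m_t(\widehat{\sigma}) \leq n$ from part (4) of Theorem \ref{thm:main} give $d_t(\widehat{\sigma}) \leq (w-n) + nt$; from below, the canonical cycle $\mathfrak{s}_{o_\uparrow}(\widehat{\sigma})$ has $j_t$-grading $(w-n) + nt$, so $d_t(\widehat{\sigma}) \geq (w-n) + nt$ (this is Lemma \ref{lem:sliceBenn}). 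Sandwiching yields $d_t(\widehat{\sigma}) = (w-n) + nt$ on $[0,1]$, which means $m_t(\widehat{\sigma}) = n$ for every $t \in [0,1)$, and in particular $\sigma \in \mathcal{M}^n_{t_0} \subseteq \mathcal{M}_{t_0}$.

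There is really no serious obstacle here; the main step is the leverage provided by Proposition \ref{prop:maxconstrain} to upgrade the slope hypothesis on $[0, t_0)$ into a statement on the entire interval $[0,1)$, after which the constraint from Theorem \ref{thm:d1=w} makes the rest automatic. If anything, the mild subtlety is that this argument actually gives the stronger conclusion $\sigma \in \mathcal{M}^n_{t_1}$ for every $t_1 \in [0,1)$, not just for $t_1 = t_0$, reflecting the fact (implicit in the previous discussion) that the set $\mathcal{M}_{t_0}$ does not really depend on the choice of $t_0 \in (0,1)$.
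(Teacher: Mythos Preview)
Your argument is correct. It differs from the paper's proof mainly in which ingredients carry the weight of the lower bound on $d_t(\widehat{\sigma})$.

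The paper proceeds more directly: from $\sigma^+ \in \mathcal{M}_{t_0}$ it writes $d_t(\widehat{\sigma}^+) = (s(\widehat{\sigma}^+)-1) + (n+1)t$ for $t \in [0,t_0)$ (using only $d_0 = s-1$ and the maximal-slope hypothesis, without invoking Proposition~\ref{prop:maxconstrain} or Theorem~\ref{thm:d1=w}), then applies Proposition~\ref{prop:stabbound} on the \emph{whole} interval $[0,t_0)$ to get $(s(\widehat{\sigma})-1) + nt \leq d_t(\widehat{\sigma})$ there, using link invariance of $s$ to replace $s(\widehat{\sigma}^+)$ by $s(\widehat{\sigma})$. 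The same slope bound you use then gives equality. So the paper's lower bound comes from the stabilization inequality itself, not from Lemma~\ref{lem:sliceBenn}.

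Your route first upgrades the hypothesis via Proposition~\ref{prop:maxconstrain} and Theorem~\ref{thm:d1=w} to pin down $d_t(\widehat{\sigma}^+)$ on all of $[0,1]$, then uses Proposition~\ref{prop:stabbound} only at $t=0$ (where it collapses to equality), and finally invokes Lemma~\ref{lem:sliceBenn} for the lower bound. This is a perfectly valid reshuffling of the same circle of ideas; it buys you the stronger conclusion $\sigma \in \mathcal{M}_{t_1}$ for every $t_1 \in [0,1)$, though of course that already follows from the paper's conclusion together with Proposition~\ref{prop:maxconstrain}. The paper's argument is more self-contained for the purposes of this lemma, while yours makes the equivalence $\mathcal{M}_{t_0} = \mathcal{S}$ visibly baked in.
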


\begin{proof} Since $\sigma^+ \in \mathcal{M}_{t_0}$, we know that $d_t(\widehat{\sigma}^+) = (s(\widehat{\sigma}^+) - 1) + (n+1)t$ for all $t \in [0,t_0)$.  Proposition \ref{prop:stabbound} tells us $d_t(\widehat{\sigma}^+) \leq d_t(\widehat{\sigma}) + t$ for all $t \in [0,2]$, so \[(s(\widehat{\sigma}^+) - 1) + (n+1)t \leq d_t(\widehat{\sigma}) +t\] for all $t \in [0,t_0)$. But Theorem \ref{thm:main} tells us that $m_t(\widehat{\sigma}) \leq n$ for all $t \in [0,2]$, and $s(\widehat{\sigma}^+) = s(\widehat{\sigma})$ (since it is an oriented link invariant), so \[d_t(\widehat{\sigma}) \leq (s(\widehat{\sigma}) - 1) + nt,\] hence $\sigma \in \mathcal{M}_{t_0}$, as desired. 
\end{proof}

\begin{lemma} \label{lem:horizstack}
Let $t_0 \in [0,1)$. Let $\sigma \in \Braid_n$ and $\sigma' \in \Braid_{n'}$, and let $(\sigma \amalg \sigma') \in \Braid_{n+n'}$ denote their {\em horizontal composition}. If $\sigma, \sigma' \in \mathcal{M}_{t_0},$ then $\sigma \amalg \sigma' \in \mathcal{M}_{t_0}$. 
\end{lemma}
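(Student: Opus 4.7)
The plan is to derive this lemma essentially as an immediate consequence of Corollary \ref{cor:additivity}. The setup is that $\sigma \in \mathcal{M}_{t_0}$ with $\sigma \in \Braid_n$ means $m_t(\widehat{\sigma}) = n$ for all $t \in [0,t_0)$, and similarly $m_t(\widehat{\sigma'}) = n'$ for all $t \in [0,t_0)$. I want to conclude that $m_t(\widehat{\sigma \amalg \sigma'}) = n + n'$ for all $t \in [0,t_0)$.

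First, I would identify the annular link $\widehat{\sigma \amalg \sigma'}$ with the annular composition $\widehat{\sigma} \amalg \widehat{\sigma'}$ of Figure \ref{fig:horizstack}: realizing $\sigma \amalg \sigma'$ as the braid that acts by $\sigma$ on the outer $n$ strands and by $\sigma'$ on the inner $n'$ strands, one sees directly that its annular closure equipped with its braid-like orientation $o_\uparrow$ is precisely $(\widehat{\sigma},o_\uparrow) \amalg (\widehat{\sigma'},o_\uparrow)$. With this identification in hand, Corollary \ref{cor:additivity} gives
\[
m_t(\widehat{\sigma \amalg \sigma'}) \;=\; m_t(\widehat{\sigma}) + m_t(\widehat{\sigma'}) \;=\; n + n'
\]
for every $t \in [0,t_0)$.

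Finally, part (4) of Theorem \ref{thm:main} tells us that for a braid in $\Braid_{n+n'}$, the slope $m_t$ is bounded above by its wrapping number $n+n'$. Since $\sigma \amalg \sigma' \in \Braid_{n+n'}$ achieves this maximum on $[0,t_0)$, we conclude $\sigma \amalg \sigma' \in \mathcal{M}^{n+n'}_{t_0} \subseteq \mathcal{M}_{t_0}$, as desired.

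There is essentially no obstacle here: once the identification of $\widehat{\sigma \amalg \sigma'}$ with the annular composition of the individual braid closures is made, the rest is a formal application of additivity of slopes together with the fact that the braid index of a braid closure equals its wrapping number and gives the maximal slope. The only point requiring a moment's care is matching the orientations, but the braid-like orientation $o_\uparrow$ of $\sigma \amalg \sigma'$ restricts to $o_\uparrow$ on each factor, so the canonical Lee generators behave as in Proposition \ref{prop:additivity}.
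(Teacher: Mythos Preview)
Your proposal is correct and follows the same approach as the paper, which simply states that the lemma follows immediately from Corollary \ref{cor:additivity}. You have merely spelled out the details (the identification of $\widehat{\sigma \amalg \sigma'}$ with the annular composition of the closures and the orientation check) that the paper leaves implicit.
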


\begin{proof} This follows immediately from Corollary \ref{cor:additivity}.
\end{proof}

\begin{proof}[Proof of Theorem \ref{thm:transverse}] 
\begin{enumerate}
	\item Theorem \ref{thm:main}, Part (1) tells us that membership in $\mathcal{M}_{t_0}$ is preserved under closed braid isotopies (annular R2 and R3 moves). Lemmas \ref{lem:posband} and \ref{lem:horizstack} imply that membership in $\mathcal{M}_{t_0}$ is preserved under positive stabilization, and Lemma \ref{lem:posdestab} implies that membership in $\mathcal{M}_{t_0}$ is preserved under positive destabilization. Since Theorem \ref{thm:TransMark} tells us two braid closures represent  transversely isotopic links iff they are related by a sequence of closed braid isotopies and positive de/stabilization, membership in $\mathcal{M}_{t_0}$ is a transverse invariant for each $t_0 \in [0,1)$, as desired.
	\item This follows from \cite[Thm. 7.1]{EtnyreVHM}, combined with part (1) above, and Lemmas \ref{lem:posband} and \ref{lem:horizstack}.
	\item We begin by showing that $\mathcal{M}_0 = \mathcal{S}$. 

Let $\sigma \in \Braid_n$ have writhe $w$, and suppose $m_0(\widehat{\sigma}) = n$. This implies that the lattice point $(a,b)$ determining $\mbox{gr}_{j_0}[\mathfrak{s}_{o_\uparrow}]$ satisfies $\mbox{gr}_k(a,b) = -n = \frac{a-b}{2}$.  But because $\mathcal{C}^{Lee}(\widehat{\sigma})$ has a unique irreducible $\sltwo$ subrepresentation of highest weight $n$, and this representation is supported in $j_1$--grading $w$, the only lattice point containing a distinguished filtered graded basis element with this $k$--grading is $(a,b) = (-n+w,n+w)$. So \[\mbox{gr}_{j_0}[\mathfrak{s}_o] = -n+w = \mbox{sl}(\widehat{\sigma}),\] so $\mbox{sl}(\widehat{\sigma}) = \mbox{gr}_{j_0}[\mathfrak{s}_o] = d_0(\widehat{\sigma}) = s(\widehat{\sigma}) - 1$. Hence $\mathcal{M}_0 \subseteq \mathcal{S}$.

To see the reverse inclusion, suppose that $\mbox{sl}(\widehat{\sigma}) = s(\widehat{\sigma}) - 1$. Then the lattice point $(a,b)$ determining $\mbox{gr}_{j_0}[\mathfrak{s}_{o_\uparrow}]$ satisfies $\mbox{gr}_{j_0}(a,b) = a = -n+w$. But note that $\mbox{gr}_{j_0}(\mathfrak{s}_o) = \mbox{gr}_{j_0}({\bf v}_-) = -n+w$ as well. Moreover, since ${\bf v}_-$ has minimal $k$--grading (hence maximal $j_2$--grading) among all distinguished filtered graded basis elements with $j_0$--grading $-n+w$, we have \[\mbox{gr}_{j_t}(\mathfrak{s}_o) = (-n+w)\left(1 - \frac{t}{2}\right) + (n+w)\left(\frac{t}{2}\right) \geq a \left(1 - \frac{t}{2}\right) + b \left(\frac{t}{2}\right)\] for all $t > 0$. This implies that there exists some $\epsilon > 0$ for which $\mbox{gr}_{j_t}[\mathfrak{s}_o] = (-n+w) + nt$ for all $t < \epsilon$. In particular, $m_0(\widehat{\sigma}) = n$. Hence, $\mathcal{S} \subseteq \mathcal{M}_0$, as desired.

The fact that $\mathcal{M}_{t_0} = \mathcal{S}$ for all $t_0 \in [0,1)$ now follows from Proposition \ref{prop:maxconstrain}.
\end{enumerate}
\end{proof}

\begin{remark} \label{rmk:mtprime} There are a number of other closely-related subsets of the braid group we might define in the hopes of constructing effective transverse invariants. For example, if we define: \[\mathcal{M}_{t_0}' := \{\mbox{Braids } \sigma \,\,|\,\,m_t(\widehat{\sigma}) = n \mbox{ for all } t \in [t_0,1)\},\] we might ask whether membership in $\mathcal{M}_{t_0}'$ is a transverse invariant. Indeed, Lemma \ref{lem:horizstack} and the proof of Lemma \ref{lem:posband} carry through verbatim to show that membership in $\mathcal{M}_{t_0}'$ is preserved under positive stabilization, but we are unable to show using the argument in the proof of Lemma \ref{lem:posdestab} that membership in $\mathcal{M}_{t_0}'$ is preserved under positive {\em destabilization}. We therefore also do not know whether $\mathcal{M}_{t_0}'$ is a monoid for $t_0 \neq 0$.
\end{remark}

There is another monoid of interest, described in \cite{BaldPlam} and \cite{EtnyreVHM}. This is the monoid of braids for which Plamenevskaya's invariant is nonzero:

\begin{definition} \[{\Psi} := \{\mbox{Braids } \sigma \,\,|\,\, \psi(\widehat{\sigma}) \neq 0\}\]
\end{definition} 

In the course of the proof of Theorem \ref{thm:RV}, we referenced a related subset of the braid group, studied by Hubbard-Saltz \cite{HS}:

\begin{definition} \[\large{\varkappa} := \{\mbox{Braids } \sigma \,\,|\,\, \kappa(\widehat{\sigma}) > 2\}\]
\end{definition}

By definition, Plamenevskaya's invariant is nonzero iff $\kappa(\widehat{\sigma}) = \infty$. 

It is shown in \cite[Thm. 1.2]{BaldPlam} that $\mathcal{S} \subseteq \Psi$. Combining this and other known results (cf. \cite{Plam}, \cite{BaldPlam}, \cite{EtnyreVHM}, \cite{HS}, \cite{PlamRV}) with the results of this paper, we see:
\[QP \subsetneq (\mathcal{S} = \mathcal{M}_0') \subseteq \ldots \subseteq \mathcal{M}_t' \subseteq \ldots \subseteq \mathcal{M}_{1-\epsilon}' \subseteq \varkappa \subseteq RV\] and \[\mathcal{S} \subseteq \Psi \subseteq \varkappa.\] This leads naturally to the following questions, relevant to the (still open) question of whether Plamenevskaya's transverse invariant is effective (cf. \cite{LipNgSar}). 

\begin{question} \label{question:Psi=S} Is $\Psi \subseteq \mathcal{S}$?
\end{question}

It is shown in \cite[Thm. 1.2]{BaldPlam} that the above inclusion holds among braids representing any $\widetilde{Kh}$--thin knot type.

\begin{remark} If $\Psi \subseteq \mathcal{S}$ for all $n \in \Z^+$, then Plamenevskaya's invariant $\Psi$ is ineffective. This follows since we already know $\mathcal{S} \subseteq \Psi$, so the reverse inclusion would imply that $\psi(\widehat{\sigma}) \neq 0$ iff $\mbox{sl}(\widehat{\sigma}) = s(\widehat{\sigma}) - 1$. In particular, the value of $\psi(\widehat{\sigma})$ would be determined by $\mbox{sl}(\widehat{\sigma})$. On the other hand, it is possible for Plamenevskaya's invariant $\Psi$ to be ineffective but $\Psi \neq \mathcal{S}$.
\end{remark}

\begin{question} \label{question:PsiMax} Is $\Psi \subseteq \mathcal{M}_{t_0}'$ for some $t_0 \in [0,1)$? For all $t_0 \in [0,1)$? How about the reverse inclusion(s)?
\end{question}

\begin{remark} If the answer to Question \ref{question:Psi=S} is yes, then $\Psi \subseteq \mathcal{M}_{t_0}'$ for all $t_0 \in [0,1)$.
\end{remark}

\begin{question} \label{question:kappa} Is $\varkappa \subseteq \mathcal{M}_{t_0}'$ for some $t \in [0,1)$?
\end{question}

\begin{remark} It is shown in \cite[Sec. 4.1]{HS} that $\varkappa \neq \Psi$, so it is not possible for $\varkappa \subseteq (\mathcal{M}_0' = \mathcal{S})$.
\end{remark}

\section{Examples}\label{sec:examples}
In this section we give some example computations of the $d_t$ invariant for some small braids.  Almost all of these computations produce {\em tent-shaped} $d_t$ profiles; that is, in almost all examples we have computed, $d_t$ is a linear function of $t$ with constant slope $m$ on the interval $t \in [0,1)$, and hence a linear function of $t$ with constant slope $-m$ on the interval $t \in [1,2)$.  However, we have not yet computed $d_t$ for braids with more than 11 crossings, and it is distinctly possible that the prevalence of tent-shaped profiles is a feature of considering only small braids.
In the figures below, the numbers on the figures are the values of the $d_t$ invariant at endpoints $t=0,2$ and at the midpoint $t=1$.  The (green) value $d_1$ is the writhe of the braid (Theorem \ref{thm:d1=w}).  

\subsection{$d_t$ detects neither quasipositivity nor the trivial braid.}
The closure of the 3-braid $\s_1^{-5} \s_2 \s_1^{3} \s_2$ is isotopic to the mirror of the the knot $10_{125}$, which is one of the smallest (in crossing number) examples of a non-quasipositive knot whose Rassmussen invariant is equal to the self-linking number plus 1.  The profile $d_t$ of $\s_1^{-5} \s_2 \s_1^{3} \s_2$ is drawn below.
\begin{itemize}
\item $ \s_1^{-5} \s_2 \s_1^{3} \s_2$:
$$
\begin{tikzpicture}
\draw[thick,->] (0,0) -- (4,0) node[anchor=west]{$t$};
\draw[thick] (0,0) -- (-4,0);
\draw[thick,->] (0,0) -- (0,2) node[anchor=south]{$d_t$};
\draw[thick] (0,0) -- (0,-3) ;
\draw[thick,blue] (0,-3) node[anchor=east] {-3} -- (1,0) node[green,anchor=south] {0};
\draw[thick,blue] (1,0) -- (2,-3)node[anchor=west] {-3};
\end{tikzpicture}
$$
\end{itemize}
Note that this example has exactly the same profile as the $d_t$ invariant of the trivial 3-braid.  Since the slope of the above profile is maximal on the interval $[0,1)$, we see that  $\s_1^{-5} \s_2 \s_1^{3} \s_2$ is right-veering (this was previously known--cf. \cite{BaldPlam} and \cite[Prop. 3.2]{PlamRV}).  The mirror of $10_{125}$ is one of only three knots with at most 10 crossings that are not quasipositive but which have a transverse representative whose self-linking number plus 1 equals its Rasmussen invariant. The other two are $10_{130}$ and $10_{141}$; all three were studied in \cite[Sec. 7]{BaldPlam}. Note that by Corollary \ref{cor:sminus1=sl}, each of these examples will also have tent-shaped $d_t$ profiles, with maximal slope on the interval $t \in [0,1)$.

\subsection{$d_t$ does not detect right-veeringness}

The family of right-veering $3$--braids \[\beta_k := (\s_1\s_2)^3 \s_2^{-k} \,\,\,\, (k \geq 1)\] plays a key role in the proof of \cite[Thm. 4.1]{PlamRV}. We are grateful to J. Baldwin for pointing out the following to us:
\begin{enumerate}
	\item When $k = 1, \ldots 4$, $\beta_k$ is quasipositive (this can be seen directly);
	\item When $k = 5$, $\psi(\widehat{\beta}_k) = 0$ (\cite{BaldwinProgram});
	\item Since $\beta_5$ can be obtained from any other braid in this family by adding {\em positive} crossings, it follows from \cite[Thm. 4]{Plam} that $\psi(\widehat{\beta}_k) = 0$ for all $k > 5$ as well.
\end{enumerate}

The $d_t$ profiles of the first few non-quasipositive braids $\beta_k$ are pictured below.

\begin{itemize}
\item $(\s_1\s_2)^3 \s_2^{-5}$ is the top tent (in blue), and $(\s_1\s_2)^3 \s_2^{-6}$ is the middle tent (in red), and $(\s_1\s_2)^3 \s_2^{-7}$ is the bottom tent (in orange):
$$
\begin{tikzpicture}
\draw[thick,->] (0,0) -- (4,0) node[anchor=west]{$t$};
\draw[thick] (0,0) -- (-4,0);
\draw[thick,->] (0,0) -- (0,2) node[anchor=south]{$d_t$};
\draw[thick] (0,0) -- (0,-2) ;
\draw[thick,blue] (0,0) node[anchor=south east] {0} -- (1,1) node[green,anchor=south] {1};
\draw[thick,blue] (1,1) -- (2,0)node[anchor=south west] {0};
\draw[thick,red] (0,-1) node[anchor=south east] {-1} -- (1,0) node[green,anchor=south] {0};
\draw[thick,red] (1,0) -- (2,-1)node[anchor=south west] {-1};
\draw[thick,orange] (0,-2) node[anchor=south east] {-2} -- (1,-1) node[green,anchor=south] {-1};
\draw[thick,orange] (1,-1) -- (2,-2)node[anchor=south west] {-2};
\end{tikzpicture}
$$
\end{itemize}

\subsection{4-strand examples}
The graph below shows the $d_t$ profile of a pair of 4-braids that are not in the monoid \[\mathcal{S} := \{\mbox{Braids } \sigma \,\,|\,\, \mbox{sl}(\widehat{\sigma}) = s(\widehat{\sigma}) - 1\}.\] Note that since $\mathcal{S} = \mathcal{M}_0$ (Theorem \ref{thm:transverse}, part (3)), only braids $\sigma \not\in \mathcal{S}$ can have interesting $d_t$ profiles in the interval $[0,1)$. 

\begin{itemize}

\item $\s_1\s_1\s_2\s_1^{-1}\s_3^{-1}\s_2\s_3^{-1}$ and $\s_1\s_2^{-1}\s_1\s_2^{-1}\s_3\s_2^{-1}\s_3$:

$$
\begin{tikzpicture}
\draw[thick,->] (0,0) -- (4,0) node[anchor=west]{$t$};
\draw[thick] (0,0) -- (-4,0);
\draw[thick,->] (0,0) -- (0,2) node[anchor=south]{$d_t$};
\draw[thick] (0,0) -- (0,-2) ;
\draw[thick,blue] (0,-1) node[anchor=east] {-1} -- (1,1) node[green,anchor=south] {1};
\draw[thick,blue] (1,1) -- (2,-1)node[anchor=west] {-1};
\end{tikzpicture}
$$
\end{itemize}

Despite the fact that the above two braids have the same $d_t$ profile, the Khovanov homology groups of their closures are not isomorphic.  In particular, these two braids are not conjugate in the braid group.

\subsection{\bf Transversely non-isotopic closed braid representatives of the knot $10_{132}$}
It was shown in \cite[Sec. 3.1]{oszt} that the knot $10_{132}$ is transversely non-simple. That is, it has two transversely non-isotopic closed braid  representatives (related by a so-called {\em negative flype}) with the same self-linking number. This knot was studied further by Khandhawit-Ng in \cite{KhNg}, who produced an infinite family of knots with the same property. In \cite[Prop. 3]{HS}, Hubbard-Saltz used their invariant $\kappa$ to distinguish the {\em conjugacy classes} of the transversely non-isotopic closed braid representatives. 

The $d_t$ invariant of the braid $A(0,0) := \sigma_3\sigma_2^{-2}\sigma_3^{2}\sigma_2\sigma_3^{-1}\sigma_1^{-1}\sigma_2\sigma_1^{2}$ (notation from \cite{KhNg} and \cite{HS}) is given below.
 
$$
\begin{tikzpicture}[scale=2.0]
\draw[thick,->] (0,0) -- (3,0) node[anchor=west]{$t$};
\draw[thick] (0,0) -- (-3,0);
\draw[thick,->] (0,0) -- (0,4) node[anchor=south]{$d_t$};
\draw[thick,blue] (0,1)node[red,anchor=east]{1} -- (0.5,2) node[red,anchor=east]{2} ;
\draw[thick,blue] (0.5,2) -- (.66,1.66) node[red,anchor=north]{5/3};
\draw[thick,blue] (0.66,1.66) -- (1,3) node[green,anchor=south] {3};
\draw[thick,blue] (1,3)--(1.33,1.66) node[red,anchor=north]{5/3} ; 
\draw[thick,blue] (1.33,1.66) -- (1.5,2) node[red,anchor=west]{2};
\draw[thick,blue] (1.5,2) -- (2,1) node[red,anchor=west]{1};
\end{tikzpicture}
$$

The profile drawn above is based on the computations of $d_t$ at values $t=\frac{k}{24}$, $k= 1,\dots, 24$, along with the symmetry $d_{1-t} = d_{1+t}$ for $t\in [0,1]$.  Thus it is a priori possible (though we think it unlikely) that there are further points of discontinuity in the profile that we have not computed.  In general, it would be interesting to know a bound -- in terms of combinatorial statistics of a braid $\sigma$ -- on the denominator $n$ such that $d_t(\sigma)$ has points of non-differentiability at $t=\frac{m}{n}$.

The points of non-differentiability in the $t$-interval $(0,2)$ above are $t\in\{1/2,2/3,1,4/3,3/2\}$.  The corresponding $d_t$ values, as indicated on the graph, are 
$$
	d_{1/2}  = d_{3/2} = 2, \ \ d_{2/3} = d_{4/3} = 5/3, \ \ \text{ and } d_1 = 3.
$$

Knotinfo \cite{KnotInfo} tells us that the $4$--ball genus of $10_{132}$ is $1$ and its $s$ invariant is $2$.  Note that since the slope, $m_t$, is maximal on a proper subset of the interval $[0,1)$, the braid must be right-veering but not quasipositive. This was already established by Hubbard-Saltz in \cite{HS}, who computed that $\kappa(A(0,0)) = 4$. Note that $\kappa \neq 2$ implies $A(0,0)$ is right-veering \cite[Cor. 16]{HS}, and $\kappa \neq \infty$ implies that $A(0,0)$ is not quasipositive \cite[Cor. 1]{Plam}.  Note also that the $d_t$ profile for $A(0,0)$ shows that the braid classes $\mathcal{M}'_t$ are not all equal to each other, as
$A(0,0)\in \mathcal{M'}_{\frac{2}{3}}$ but $A(0,0)\notin \mathcal{M'}_0$.

\bibliography{SKhTwoVariable}
\end{document}